\documentclass{article}

\usepackage{style}

\usepackage{authblk}
\usepackage[margin=1in]{geometry}

\title{Non-Intrusive~Uncertainty~Quantification using Reduced~Cubature~Rules}

\author[1,2]{L.M.M.~van~den~Bos\footnote{Corresponding author: \texttt{l.m.m.van.den.bos@cwi.nl}}}
\author[1,2]{B.~Koren}
\author[3]{R.P.~Dwight}

\affil[1]{Eindhoven University of Technology, P.O.~Box 513, 5600~MB Eindhoven, Netherlands}
\affil[2]{Centrum Wiskunde \& Informatica, P.O.~Box 94079, 1090~GB Amsterdam, Netherlands}
\affil[3]{Delft University of Technology, P.O.~Box 5, 2600~AA Delft, Netherlands}

\begin{document}
\maketitle

\begin{abstract}
\noindent For the purpose of uncertainty quantification with collocation, a method is proposed for generating families of one-dimensional nested quadrature rules with positive weights and symmetric nodes. This is achieved through a \emph{reduction} procedure: we start with a high-degree quadrature rule with positive weights and remove nodes while preserving symmetry and positivity. This is shown to be always possible, by a lemma depending primarily on Carath\'eodory's theorem. The resulting one-dimensional rules can be used within a Smolyak procedure to produce sparse multi-dimensional rules, but weight positivity is lost then. As a remedy, the reduction procedure is directly applied to multi-dimensional tensor-product cubature rules. This allows to produce a family of sparse cubature rules with positive weights, competitive with Smolyak rules. Finally the positivity constraint is relaxed to allow more flexibility in the removal of nodes. This gives a second family of sparse cubature rules, in which iteratively as many nodes as possible are removed. The new quadrature and cubature rules are applied to test problems from mathematics and fluid dynamics. Their performance is compared with that of the tensor-product and standard Clenshaw--Curtis Smolyak cubature rule.
\end{abstract}

\begingroup
\small
\textbf{Keywords:} Uncertainty~Quantification, Numerical~Integration, Cubature~Rules
\endgroup

\section{Introduction}
The problem of non-intrusive \emph{uncertainty quantification} (UQ) in expensive computational models is considered, for example computational fluid dynamics (CFD) models. Consider a model with $d$ uncertain parameters having specified distributions. The objective is to obtain statistics on the outputs of the model, while using the model only as a black box (i.e.\ non-intrusively). We wish to obtain accurate statistics with as few evaluations of the model as possible.

The canonical method is Monte Carlo (MC), with the well-known dimension independent convergence rate of $\mathcal{O}(1 / \sqrt{N})$, where $N$ is the number of samples. For sufficiently low dimension $d$ this can be improved to $\mathcal{O}\bigl( (\log N)^d / N\bigr)$ using Quasi Monte Carlo methods, see e.g.~\cite{Ye1998,Caflisch1998}. For $d \lesssim 10$ this can be significantly further improved by using methods based on polynomial approximation of the model output in the parameter space. This case is studied in this paper. For a sufficiently smooth parametrized model, spectral convergence is obtained. Stochastic Collocation (SC) \cite{Najm2009,Eldred2009,Xiu2005} is such a method which uses either tensor products or sparse grids to sample the parameter space. Quadrature weights on these grids allow the evaluation of statistics. We mention also the hybrid techniques of Witteveen et al.~\cite{Witteveen2010,Witteveen2013}, which use piecewise polynomial interpolation on random MC grids. Approaches based on compressed sensing (and therefore not dependent on quadrature rules) have also been studied \cite{Doostan2011,Blatman2011}.

SC methods can be regarded as cubature rules targeted at moderate dimensional spaces. The conventional tensor-product cubature rule introduces a large number of nodes for moderate $d$. Sparse grid strategies are therefore required, e.g. a Smolyak sparse grid \cite{Smolyak63,Novak1999}. These in turn require nested one-dimensional quadrature rules for the optimal result, but no general strategy exists to create these nested quadrature rules with positive weights for arbitrary distributions. Furthermore, a Smolyak procedure does not guarantee that the weights of the resulting multi-dimensional rule are positive (and therefore is not necessarily numerically stable), even if the underlying one-dimensional rule has positive weights.

Sparse grid techniques have been studied thoroughly by various authors. For example, Garcke, Gerstner, and Griebel (see e.g.~\cite{Garcke2013,Gerstner1998}) studied the generation of sparse grids, among others with improvements such as dimension dependent adaptivity. Narayan and Jakeman \cite{Narayan2014} studied the construction of quadrature rules as input for the Smolyak sparse grid. Nobile et al.\ \cite{Nobile2008} studied the effectiveness of sparse grids compared with MC methods. Anisotropic extensions (i.e.\ different quadrature rules in different dimensions) were also studied by Nobile et al.\ \cite{Nobile2008-2}. Pfl\"uger \cite{Pfluger2010} studied adaptive sparse grids, where locally the grid is refined if necessary, yielding a strategy to determine a sparse grid that depends on the specifics of the model.

The present paper has two major contributions. Firstly a method is introduced for constructing a nested family of one-dimensional quadrature rules \emph{with positive weights}, from any single high-order rule (with positive weights). Thus given a quadrature rule for a specific probability distribution, a nested family can be constructed, suitable for use in a Smolyak procedure. If the original rule is symmetric, the symmetry of the entire family is guaranteed. Secondly, in the multi-dimensional case a closely related operation can be performed. Starting from a tensor-product rule, nodes can be removed successively while maintaining positivity of all weights and symmetry of the rule. The result is a new kind of sparse grid with only positive weights. Because the positivity restriction is quite limiting, also the case where negative weights are permitted is considered. This allows to remove more nodes at each step of the reduction procedure. All resulting rules are well suited for UQ, as top level quadrature rules can be chosen separately for each parameter, without any concern about nesting. The new rules are demonstrated on the Genz test functions, two CFD test cases, and compared to the tensor-product and Smolyak rules.

The study is set up as follows. First, in the section hereafter the UQ problem is formulated. In the next section some useful well-known methods are discussed. In Section~\ref{sec:redquadrule} the reduced quadrature rule is introduced, which is extended to a multi-dimensional setting in Section~\ref{sec:redcubrule}. The introduced cubature rule is compared with conventional cubature rules in Section~\ref{sec:numerics}. Firstly a mathematical comparison is made using test functions. Secondly the cubature rules are applied in UQ for the standard lid-driven cavity flow problem computed through a Lattice Boltzmann method with two uncertain parameters. To show the effectiveness in high-dimensional problems, it is finally applied to a three-dimensional aircraft aerodynamics problem, computed through a finite-volume Euler-flow model, considering seven uncertain parameters.

\section{Uncertainty Quantification}
\label{sec:probform}
Consider a discrete computational problem for a quantity of interest
\begin{equation}
	v \coloneqq v\bigl(s(\xib)\bigr),
\end{equation}
where $s: \mathbb{R}^d \rightarrow \mathbb{R}^n$ is the state of some system, satisfying
\begin{equation}
	R(s; \xib) = 0,
\end{equation}
where $R$ is typically a discretization of a continuous PDE, including initial and boundary conditions, and where $n$ is the dimension of the discrete state. The quantity of interest $v: \mathbb{R}^n \rightarrow \mathbb{R}$ is a single quantity derived from the full state. The parameters $\xib$ are $d$ random variables, that is $\xib: \Omega \rightarrow \Xi$, which are assumed to be independent and square-integrable (i.e.\ having finite variance), with respect to the probability space $(\Omega, \mathcal{F}, P)$ with $\Xi \subset \mathbb{R}^d$, $\Omega \subset \mathbb{R}^d$, $\mathcal{F} \subset 2^\Omega$, and $P$ the probability measure. Although infinite dimensional, random fields can be fit into this framework after the application of a truncated Karhunen-Lo\`eve expansion~\cite{Ghanem1991,Xiu2010}.

The problem is now to determine the probability distribution and statistical moments of $u$, with $u(\xib) \coloneqq v\bigl(s(\xib)\bigr)$. The focus is on the latter, i.e.~on determining
\begin{equation}
	\mathbb{E}[u^l(\boldsymbol \xi)] \coloneqq \int_\Xi u^l(\boldsymbol \xi) \dd P(\boldsymbol \xi), \text{ for } l = 1, 2, \dots
\end{equation}
The collocation approach is to approximate this integral using a weighted combination of a finite number of samples $\{\boldsymbol \xi_k\}_{k=1, \dots, N} \in \Xi$ as
\begin{equation}
	\label{eq:quadcubrule}
	\mathbb{E}[u^l(\boldsymbol \xi)] \simeq \sum_{k=1}^N u^l(\boldsymbol \xi_k) w_k, \text{ for } l = 1, 2, \dots,
\end{equation}
where $\{w_k\}_{k=1, \dots, N} \in \mathbb{R}$ are the weights. $u^l(\boldsymbol \xi_k) = (u(\boldsymbol \xi_k))^l$ is determined by solving the (potentially expensive) deterministic discrete problem
\begin{equation}
	R(s(\xib_k); \xib_k) = 0
\end{equation}
for $s(\xib_k)$, and by evaluating $u(\xib_k)$.

In the remainder of this paper the term quadrature rule is used in a one-dimensional setting (i.e.\ $d = 1$) and the term cubature rule otherwise. All properties of cubature rules also apply to quadrature rules (but not vice versa).

\section{Numerical integration - Terminology and basic principles}
\subsection{Quadrature and cubature rules}
\label{subsec:quadcubrule}
Let $\mathbb{P}(K, d)$ be all $d$-variate polynomials of degree equal to or less than $K$. The \emph{degree} of a cubature rule is defined as the number $K$ such that all polynomials $p \in \mathbb{P}(K, d)$ are integrated exactly and at least one polynomial $p \in \mathbb{P}(K+1, d)$ exists that is not integrated exactly.

We consider a set of cubature rules to be \emph{nested} if the nodes of a smaller cubature rule are also nodes of all larger cubature rules. If a cubature rule is nested, error estimates can be naturally constructed by comparing the approximation on two consecutive levels. In addition to nesting, it is desirable that rules are (i) \emph{symmetric}, meaning the nodes and weights have the same symmetry as the underlying probability distribution, and (ii) \emph{positive}, meaning all weights are positive. Symmetric quadrature rules naturally represent the underlying distribution and are necessary in the multi-dimensional case to reduce the number of nodes (which is done in the second part of this paper). Quadrature rules with positive weights are unconditionally numerically stable if they are evaluated and yield an integration operator with norm equal to 1. For example Gaussian quadrature rules are positive, irrespective of the underlying distribution, and symmetric if the distribution is symmetric~\cite{Golub1969}. However they are not nested. The nested Clenshaw--Curtis rule is usually applied with a uniform distribution, in which case weights are positive~\cite{Clenshaw1960} -- but this is not true if weights are constructed for an arbitrary distribution.

\subsection{The generalized Vandermonde-matrix}
\label{subsec:genvanmat}
If $N$ distinct one-dimensional quadrature nodes (denoted by $\{\xi_k\}_{k=1}^N\subset\mathbb{R}$) are specified, the weights of the quadrature rule can be determined such that it is a rule of degree $N-1$ by solving the following linear system:
\begin{equation}
	\label{eq:vandermondesys}
	\sum_{k=1}^N \xi_k^j w_k = \int_\Xi \xi^j \dd P(\xi), \text{ for all } j = 0, \dots, N-1.
\end{equation}

The matrix of this system is a Vandermonde-matrix, hereafter denoted by $V$ and defined by $V_{j,k} = \xi_k^j$. This system has a unique solution for distinct nodes. Hence the quadrature rule is unambiguously specified by the nodes only. For large $N$ the Vandermonde-matrix becomes ill-conditioned, such that for various quadrature rules more efficient algorithms exist to determine both the nodes and the weights, e.g.\ the algorithm of Golub and Welsch \cite{Golub1969} can be used to determine a Gauss quadrature rule and Clenshaw--Curtis rules can be determined efficiently using a Fast Fourier transform \cite{Waldvogel2006}.

Generalizing to a multi-dimensional setting, let $\{\xib_k\}_{k=1}^N \subset \mathbb{R}^d$ be $N$ cubature nodes. Integration conditions result in the system
\begin{equation}
	\label{eq:genvandermondesys}
	\sum_{k=1}^N m_j(\boldsymbol \xi_k) w_k = \int_\Xi m_j(\boldsymbol \xi) \dd P(\boldsymbol \xi), \text{ for all } j = 1, \dots, N,
\end{equation}
where $m_j$ is the $j^\text{th}$ monomial under some ordering. We call the matrix $G_{j,k} = m_j(\boldsymbol \xi_k)$ the \emph{generalized Vandermonde-matrix}. As is well-known $G$ may be singular, but for tensor-product rules $G$ is non-singular, as it can be formed by the Kronecker product of the Vandermonde-matrix of the quadrature rules \cite{Laub2005}. Just as in the one-dimensional case, in general this matrix can become ill-conditioned for high-dimensional polynomial spaces or large $N$.

\subsection{Smolyak cubature rules}
\label{subsec:smolyak}
The Smolyak procedure \cite{Smolyak63} is a method of constructing ``sparse'' cubature rules from a family of (typically nested) quadrature rules indexed by level. Rather than building the tensor product of the one-dimensional rule at the finest level in every direction, Smolyak builds tensor products of fine levels in some directions and coarse levels in others, and combines many such products in a single rule. If the one-dimensional rule is nested, these tensor products have many coincident nodes -- reducing the total cost. The resulting set of nodes is known as a \emph{sparse grid} \cite{Novak1999}. 

A concise formula for the Smolyak rule \cite{Wasilkowski1995} is
\begin{equation}
	\mathcal{S}_K = \sum_{\substack{K-d+1 \leq \|\alpha\|_1 \leq K \\ \alpha \in \mathbb{N}^d}} {(-1)}^{K-\|\alpha\|_1} \binom{d-1}{K-\|\alpha\|_1} \bigotimes_{k=1}^d \mathcal{Q}_{N_{\alpha_k}},
\end{equation}
where $\{N_k\}_{k=1}^N \subset \mathbb{N}$ is an increasing sequence and $\mathcal{Q}_{N_k}$ is an $N_k$-node quadrature rule. $\{N_k\}$ is typically an exponentially growing sequence, because then the Smolyak cubature rule has a relatively high degree, which can be seen in the following lemma \cite{Novak1996,Novak1999}.
\begin{lemma}
	\label{lmm:smoldegree}
	Let $\{N_k\}$ grow exponentially in $k$. Then $S_K$ has at least degree $2(K-d)+1$.
\end{lemma}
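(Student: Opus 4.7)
The plan is to exploit the classical \emph{telescoping} representation of the Smolyak operator and then reduce the conclusion to a counting argument driven by the exponential growth of $\{N_k\}$. Introduce the univariate difference operators $\Delta_j \coloneqq \mathcal{Q}_{N_j} - \mathcal{Q}_{N_{j-1}}$ with the convention $\mathcal{Q}_{N_0} \equiv 0$. A standard binomial-coefficient rearrangement (essentially Pascal's identity applied to the sum in the statement) rewrites
\[
\mathcal{S}_K = \sum_{\substack{\alpha \in \mathbb{N}^d \\ \|\alpha\|_1 \leq K}} \bigotimes_{k=1}^d \Delta_{\alpha_k}, \qquad I = \sum_{\alpha \in \mathbb{N}^d} \bigotimes_{k=1}^d \Delta_{\alpha_k},
\]
where $I$ denotes the exact integral and the second identity is formal but terminates on any fixed polynomial. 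Subtracting gives
\[
I - \mathcal{S}_K = \sum_{\|\alpha\|_1 > K} \bigotimes_{k=1}^d \Delta_{\alpha_k},
\]
so it suffices to show that this operator annihilates every monomial $m(\xib) = \prod_k \xi_k^{\beta_k}$ with $\sum_k \beta_k \leq 2(K-d)+1$.

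The tensor-product structure factorizes each summand as $\prod_k \Delta_{\alpha_k}[\xi_k^{\beta_k}]$, and any single factor vanishes whenever $\beta_k \leq \deg(\mathcal{Q}_{N_{\alpha_k-1}})$, since in that case both $\mathcal{Q}_{N_{\alpha_k-1}}$ and $\mathcal{Q}_{N_{\alpha_k}}$ integrate $\xi_k^{\beta_k}$ exactly and their difference is zero. Hence the remaining task is the purely combinatorial statement: for every $\alpha \in \mathbb{N}^d$ with $\|\alpha\|_1 \geq K+1$, there exists an index $k$ with $\beta_k \leq \deg(\mathcal{Q}_{N_{\alpha_k-1}})$.

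This last step is the principal obstacle and is where the exponential growth hypothesis is essential. I would argue by contradiction: if $\beta_k > \deg(\mathcal{Q}_{N_{\alpha_k-1}})$ for every $k$, then summing the strict inequalities and invoking the exponential lower bound $\deg(\mathcal{Q}_{N_{\alpha_k-1}}) \gtrsim 2^{\alpha_k-1}$ (which follows from $N_j \gtrsim 2^j$ together with the interpolatory bound $\deg(\mathcal{Q}_{N_j}) \geq N_j - 1$, improved by one for symmetric rules of odd size) gives $\sum_k \beta_k \gtrsim \sum_k 2^{\alpha_k-1}$. Since $2^x$ is convex, Jensen's inequality applied to the constraint $\sum_k (\alpha_k - 1) \geq K+1-d$ produces the lower bound $\sum_k 2^{\alpha_k-1} \geq d \cdot 2^{(K+1-d)/d}$, and a short case check confirms that this exceeds $2(K-d)+1$ for every admissible pair $(K,d)$, contradicting the hypothesis on $\sum_k \beta_k$. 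The most delicate bookkeeping concerns the two shifts arising from the convention $\mathcal{Q}_{N_0} \equiv 0$ (which forces $\deg(\mathcal{Q}_{N_0}) = -1$) and from the strict versus weak inequality $\|\alpha\|_1 > K$; careful tracking of these is exactly what produces the constant $2(K-d)+1$ rather than a weaker bound.
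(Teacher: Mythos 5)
First, note that the paper does not actually prove this lemma: it is quoted from Novak and Ritter \cite{Novak1996,Novak1999}, so the comparison is with the standard literature argument. Your telescoping decomposition ($\Delta_j = \mathcal{Q}_{N_j} - \mathcal{Q}_{N_{j-1}}$, error $= \sum_{\|\alpha\|_1 > K} \bigotimes_k \Delta_{\alpha_k}$, per-factor vanishing when $\beta_k \leq \deg \mathcal{Q}_{N_{\alpha_k-1}}$) is exactly the classical route, so the structure is sound. The problem is that the decisive counting step --- the only place the constant $2(K-d)+1$ can come from --- is asserted rather than carried out, and the bounds you feed into it are not correct for the sequence the paper uses. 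Concretely: (i) for coordinates with $\alpha_k = 1$ the convention $\mathcal{Q}_{N_0} \equiv 0$ means $\Delta_1$ annihilates nothing, so those coordinates give no lower bound on $\beta_k$ at all; hence $\sum_k \beta_k \geq \sum_k 2^{\alpha_k-1}$ (summed over \emph{all} $k$) is invalid, and the Jensen bound $d \cdot 2^{(K+1-d)/d}$ built on it addresses the balanced configuration, whereas the true extremal case is all excess concentrated in one coordinate, e.g.\ $\alpha = (K-d+2,1,\dots,1)$. (ii) The paper's sequence is $N_j = 2^{j-1}+1$, so the interpolatory bound gives $\deg \mathcal{Q}_{N_j} \geq 2^{j-1}$, not of order $2^{j}$ as you claim; with that bound, in the extremal case your contradiction hypothesis yields only $\sum_k \beta_k \geq 2^{K-d}+1$, which for $K-d \in \{1,2\}$ equals $2(K-d)+1$ and produces no contradiction. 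The inequality closes only if one uses that the symmetric odd-point rules are exact to one degree higher ($\deg \mathcal{Q}_{N_j} \geq N_j$, e.g.\ Clenshaw--Curtis with an odd number of nodes and a symmetric weight), i.e.\ precisely the ``delicate bookkeeping'' you defer is where the result actually lives.

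A clean way to repair the argument (and essentially what Novak--Ritter do, in the dual ``forward'' form) is to isolate the property that matters: the univariate degrees satisfy $\deg \mathcal{Q}_{N_j} \geq 2j-1$, which holds for the doubling sequence with symmetric rules since $2^{j-1}+1 \geq 2j-1$. Then, given a monomial with $\sum_k \beta_k \leq 2(K-d)+1$, choose $\alpha_k = \lceil (\beta_k+1)/2 \rceil$, so that each factor $\Delta_{j}$ with $j > \alpha_k$ annihilates $\xi^{\beta_k}$, while $\sum_k \alpha_k \leq \tfrac12 \sum_k \beta_k + d \leq K + \tfrac12$, hence $\sum_k \alpha_k \leq K$ by integrality; every surviving term in your error expansion therefore vanishes. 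This replaces the Jensen/convexity estimate and the unresolved case check by a two-line count, and makes explicit that exponential growth per se is not the operative hypothesis --- what is needed is a linear lower bound on the exactness degrees with the right constant, which the exponential nested sequence comfortably supplies.
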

In this paper, all Smolyak cubature rules are generated using quadrature rule sets with the following exponentially growing numbers of nodes:
\begin{equation}
	N_k = \begin{cases}
		1 & \text{if $k = 1$,} \\
		2^{k-1}+1 & \text{otherwise.}
	\end{cases}
\end{equation}
This sequence is chosen such that the sequence of $N_k$ nodes of the Clenshaw--Curtis quadrature rule is nested.

\begin{figure}
	\centering
	\begin{minipage}{.3\textwidth}
		\includegraphics[width=\textwidth]{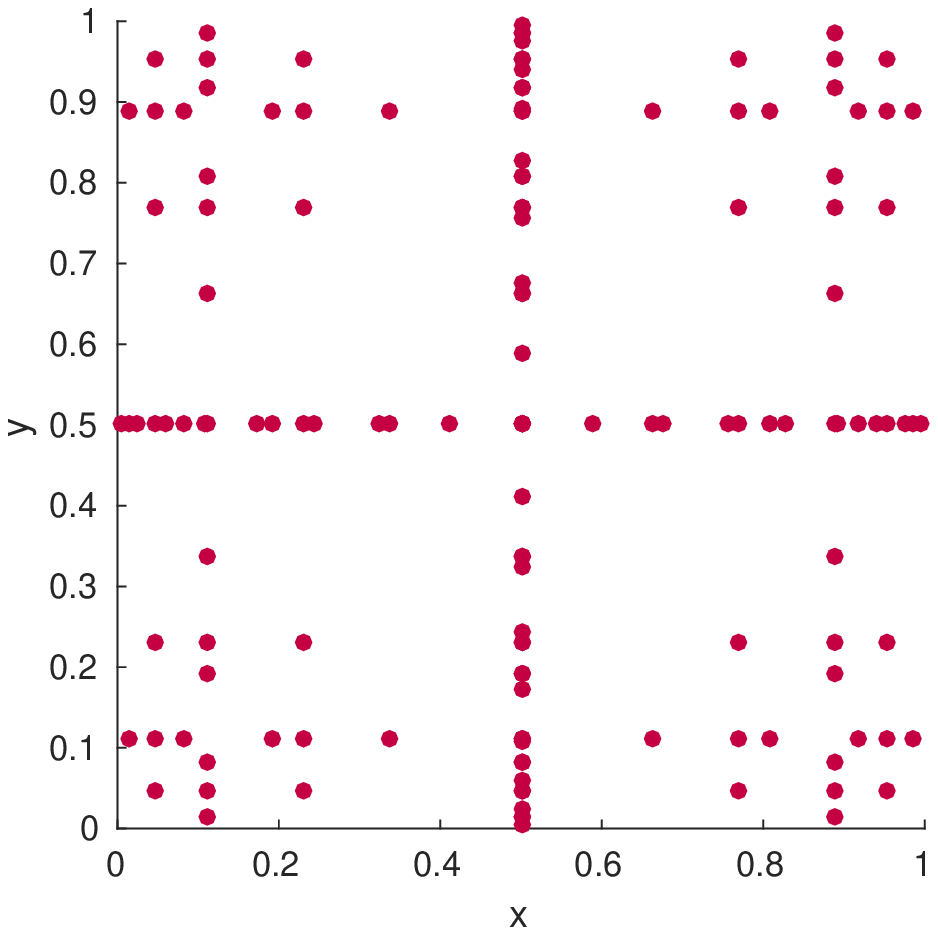}
		\subcaption{Gauss--Legendre \\ (141~nodes)}
		\label{fig:smolyak2dnnested}
	\end{minipage}
	\begin{minipage}{.3\textwidth}
		\includegraphics[width=\textwidth]{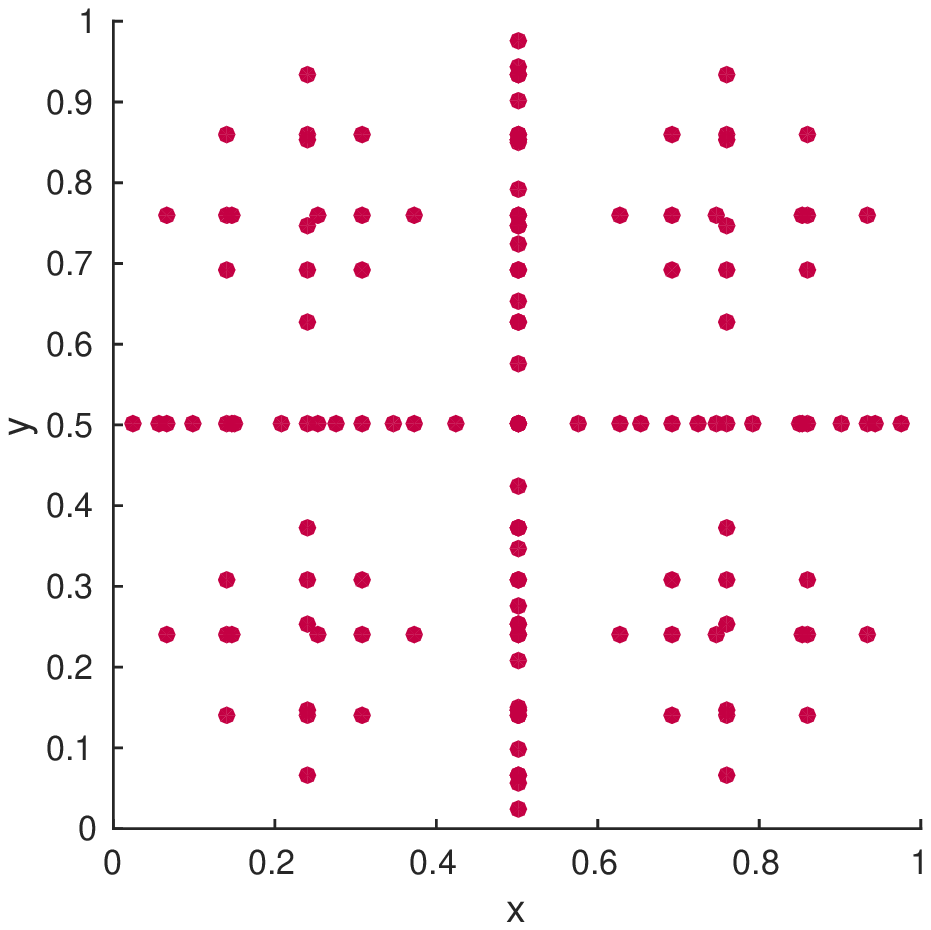}
		\subcaption{Gauss--Jacobi \\ (141~nodes)}
		\label{fig:smolyak2dnnestedb}
	\end{minipage}
	\begin{minipage}{.3\textwidth}
		\includegraphics[width=\textwidth]{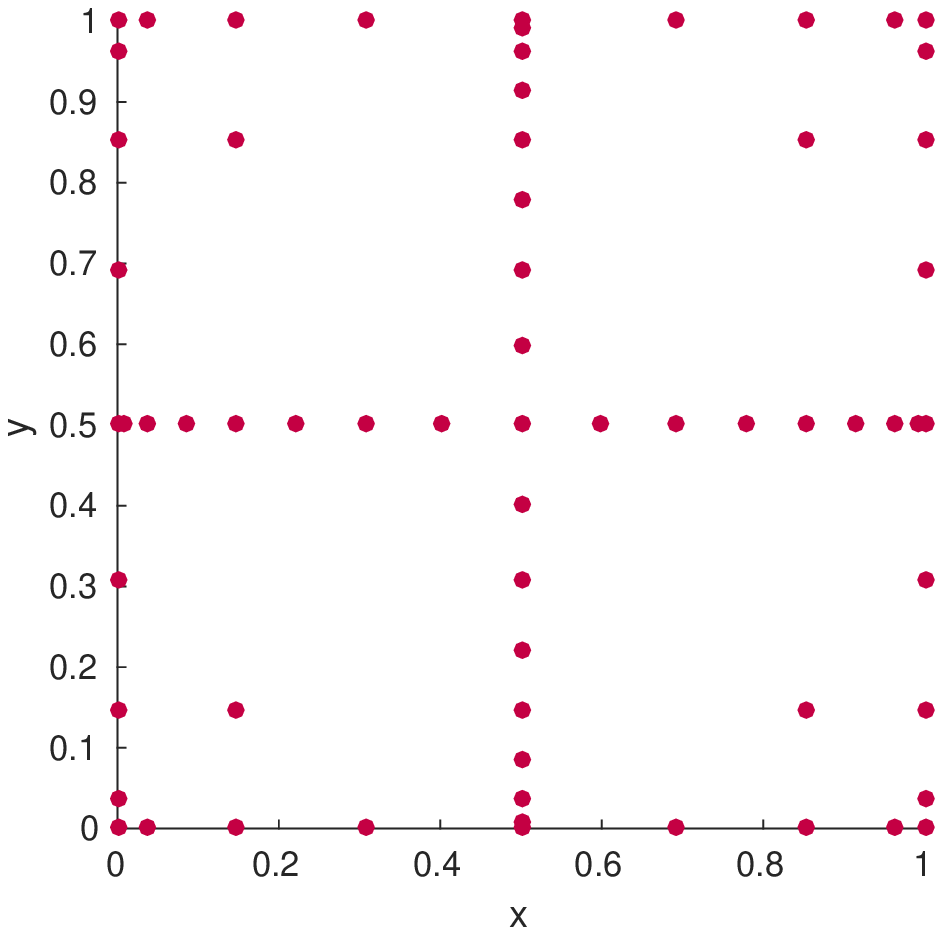}
		\subcaption{Clenshaw--Curtis \\ (65~nodes)}
		\label{fig:smolyak2dnested}
	\end{minipage}
	\caption{Two-dimensional Smolyak cubature rule nodes of several quadrature rules.}
	\label{fig:smolyak2d}
\end{figure}

Smolyak rules do not have positive weights in general, but the condition number $\kappa$ of the cubature rule is bounded if the original quadrature rule has positive weights \cite{Novak1999}:
\begin{equation}
	\label{eq:smolnorm}
	\kappa \coloneqq \frac{\sum_{k=1}^N |w_k|}{\sum_{k=1}^N w_k} = \sum_{k=1}^N |w_k| = \mathcal{O}\left( (\log N)^{d-1} \right).
\end{equation}

In Figure~\ref{fig:smolyak2d} two-dimensional sparse grids resulting from Smolyak applied to Gauss--Legendre, Gauss--Jacobi (with $\alpha = \beta = 4$), and Clenshaw--Curtis quadrature rules are plotted. Thanks to nesting, Clenshaw--Curtis rules result in less than half of the number of nodes of the other rules, and this benefit will improve in higher dimensions. If Gaussian rules are used, the weights of the original rule are certainly positive, and therefore \eqref{eq:smolnorm} holds.

\section{Carath\'eodory reduction of quadrature rules}
\label{sec:redquadrule}
In this section new quadrature rules are introduced based on the removal of nodes. Given an initial quadrature rule with positive weights, a set of nested quadrature rules with positive weights is determined by removing nodes while retaining symmetry. We call these new rules \emph{reduced quadrature rules} and the procedure to remove nodes the \emph{reduction step}. These rules are by construction nested and do have positive weights. The reduction step can be applied in such a way that the rules are also symmetric.

\subsection{Reduction step}
The principle of the reduction step is as follows. First, recall the linear system \eqref{eq:vandermondesys}:
\begin{equation}
	\underbrace{\begin{pmatrix}
		\xi_1^0 & \xi_2^0 & \dots & \xi_N^0 \\
		\xi_1^1 & \xi_2^1 & \dots & \xi_N^1 \\
		\vdots & \vdots & \ddots & \vdots \\
		\xi_1^{N-2} & \xi_2^{N-2} & \dots & \xi_N^{N-2} \\
		\xi_1^{N-1} & \xi_2^{N-1} & \dots & \xi_N^{N-1}
	\end{pmatrix}}_{V}
	\begin{pmatrix}
		w_1 \\
		w_2 \\
		\vdots \\
		w_{N-1} \\
		w_N
	\end{pmatrix}
	=
	\begin{pmatrix}
		\int_\Xi \xi^0 \dd P(\xi) \\
		\int_\Xi \xi^1 \dd P(\xi) \\
		\vdots \\
		\int_\Xi \xi^{N-2} \dd P(\xi) \\
		\int_\Xi \xi^{N-1} \dd P(\xi)
	\end{pmatrix},
\end{equation}
which describes an $N$-node quadrature rule of degree $N-1$. The goal is to find a subset of $N-1$ nodes that form a quadrature rule of degree $N-2$. Such a rule can easily be determined by considering the following system:
\begin{equation}
	\label{eq:redvandermondesys}
	\underbrace{\begin{pmatrix}
		\xi_1^0 & \xi_2^0 & \dots & \xi_N^0 \\
		\xi_1^1 & \xi_2^1 & \dots & \xi_N^1 \\
		\vdots & \vdots & \ddots & \vdots \\
		\xi_1^{N-2} & \xi_2^{N-2} & \dots & \xi_N^{N-2}
	\end{pmatrix}}_{V_{-1}}
	\begin{pmatrix}
		w_1 \\
		w_2 \\
		\vdots \\
		w_{N-1} \\
		w_N
	\end{pmatrix}
	=
	\begin{pmatrix}
		\int_\Xi \xi^0 \dd P(\xi) \\
		\int_\Xi \xi^1 \dd P(\xi) \\
		\vdots \\
		\int_\Xi \xi^{N-2} \dd P(\xi)
	\end{pmatrix},
\end{equation}
where $V_{-1}$ is the matrix $V$ after the removal of the last row. This notation is used hereafter in a more general way: $A_{-k}$ denotes matrix $A$ after the removal of the last $k$ rows.

Each column of $V_{-1}$ is related to a node of the quadrature rule, so removing a column from the matrix above and solving the resulting system yields a nested quadrature rule of degree $N-2$. The question remains which column can be removed such that the system that remains has a solution with positive elements. The answer follows from (a variant of) the well-known Carath\'eodory theorem. The constructive proof will be useful later.

\begin{theorem}[Carath\'eodory's theorem]
	Let $\mathbf{v}_1, \mathbf{v}_2, \dots, \mathbf{v}_N, \mathbf{v}_{N+1}$ be $N+1$ vectors spanning an $N$-dimensional space. Let $\mathbf{v} = \sum_{k=1}^{N+1} \lambda_k \mathbf{v}_k$ with $\lambda_k \geq 0$. Then there exist $\beta_k \geq 0$ such that $\mathbf{v} = \sum_{k \in I} \beta_k \mathbf{v}_k$ and $I \subset \{1, \dots, N+1\}$ with $|I| \leq N$.
\end{theorem}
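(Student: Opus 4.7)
The plan is to give a constructive proof by eliminating one vector from the representation. Since $N+1$ vectors live in an $N$-dimensional space, they must be linearly dependent, so I can choose coefficients $\alpha_1, \dots, \alpha_{N+1}$, not all zero, with $\sum_{k=1}^{N+1} \alpha_k \mathbf{v}_k = \mathbf{0}$. The key observation is that for any scalar $t$ the representation
\begin{equation}
	\mathbf{v} = \sum_{k=1}^{N+1} (\lambda_k - t\alpha_k)\mathbf{v}_k
\end{equation}
remains valid, so I have a one-parameter family of representations I can tune.

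Next I would choose $t$ to drive exactly one of the new coefficients to zero while keeping the rest non-negative. By replacing $\alpha_k$ with $-\alpha_k$ if necessary I may assume at least one $\alpha_k$ is strictly positive. Then I set
\begin{equation}
	t^\ast = \min\Bigl\{\frac{\lambda_k}{\alpha_k} \; : \; \alpha_k > 0\Bigr\},
\end{equation}
which is well-defined and non-negative since each $\lambda_k \geq 0$. Let $k^\ast$ be an index attaining this minimum. Then $\lambda_{k^\ast} - t^\ast \alpha_{k^\ast} = 0$, while for indices with $\alpha_k > 0$ the definition of $t^\ast$ gives $\lambda_k - t^\ast \alpha_k \geq 0$, and for indices with $\alpha_k \leq 0$ we have $-t^\ast \alpha_k \geq 0$, so $\lambda_k - t^\ast \alpha_k \geq \lambda_k \geq 0$. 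Hence setting $\beta_k \coloneqq \lambda_k - t^\ast \alpha_k$ for $k \neq k^\ast$ yields non-negative coefficients and a representation $\mathbf{v} = \sum_{k \in I} \beta_k \mathbf{v}_k$ with $I = \{1, \dots, N+1\} \setminus \{k^\ast\}$, so $|I| = N$.

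I do not expect any serious obstacle; the statement is a classical fact and the only subtlety is the sign bookkeeping when choosing $t^\ast$ (ensuring that at least one $\alpha_k$ is positive, and that minimising over only the positive $\alpha_k$ suffices to preserve non-negativity for all coefficients). I would emphasise that this argument is constructive and reduces the size of the support by exactly one at each application; this is what makes it useful for the reduction step, since the linear dependence vector $\boldsymbol\alpha$ corresponds to a null-vector of the truncated matrix $V_{-1}$ in \eqref{eq:redvandermondesys}, and the update $\lambda_k \mapsto \lambda_k - t^\ast \alpha_k$ is precisely the operation that removes one column (node) from the quadrature rule while preserving positivity of the weights.
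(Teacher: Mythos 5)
Your proof is correct and follows essentially the same route as the paper: use linear dependence of the $N+1$ vectors to obtain a null combination, shift the coefficients by $t\,\boldsymbol\alpha$, and pick $t^\ast = \min\{\lambda_k/\alpha_k : \alpha_k > 0\}$ to zero out one coefficient while keeping the rest non-negative. Your sign bookkeeping for the indices with $\alpha_k \leq 0$ is in fact slightly more explicit than the paper's version.
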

\begin{proof}
	Because $\mathbf{v}_1, \dots, \mathbf{v}_{N+1}$ are $N+1$ vectors in an $N$-dimensional space, they must be linearly dependent. So there are $c_k$, not all equal to zero, such that
	\begin{equation}
		\sum_{k=1}^{N+1} c_k \mathbf{v}_k = 0.
	\end{equation}
	So for any $\alpha \in \mathbb{R}$, it is true that
	\begin{align}
		\mathbf{v} &= \sum_{k=1}^{N+1} \lambda_k \mathbf{v}_k - \alpha \sum_{k=1}^{N+1} c_k \mathbf{v}_k \\
		&= \sum_{k=1}^{N+1} (\lambda_k - \alpha c_k) \mathbf{v}_k.
	\end{align}
	Without loss of generality, we assume that at least one $c_k > 0$. Then the following choice is well-defined:
	\begin{equation}
		\alpha = \min_{k=1,\dots,N+1} \left\{\frac{\lambda_k}{c_k} : c_k > 0\right\} \eqqcolon \frac{\lambda_{k_0}}{c_{k_0}}.
	\end{equation}

	Choosing $\beta_k = \lambda_k - \alpha c_k$, it is true that $\beta_{k_0} = 0$ so with $I = \{1, 2, \dots, k_0-1, k_0+1, \dots, N\}$ the following holds:
	\begin{align}
		\mathbf{v} &= \sum_{k \in I} \beta_k \mathbf{v}_k. \qedhere
	\end{align}
\end{proof}

Carath\'eodory's theorem can be interpreted as a column-removal step. First, let $\mathbf{v}$ be the columns of $V_{-1}$. The elements $c_k$ from the proof form a null vector of the matrix. Determining $\alpha$ and $k_0$ from the proof yields that $w_k - \alpha c_k \geq 0$ and $w_{k_0} - \alpha c_{k_0} = 0$, such that the node $x_{k_0}$ can be removed from the quadrature rule. This yields a quadrature rule of $N-1$ nodes of degree $N-2$ with positive weights, which was the goal.

Repeatedly applying the reduction step to an existing quadrature rule yields a set of nested quadrature rules with positive weights. The reduction step is however not unique in general. The null vector $\mathbf{c}$ contains both positive and negative elements (guaranteed by the fact that the first row of the matrix contains only positive values), so $-\mathbf{c}$ is also a null vector with both positive and negative elements and each null vector can be used to eliminate a different node.

This non-uniqueness imposes a choice. We suggest a heuristic greedy strategy of eliminating at each reduction step that node with the lowest probability based on the underlying probability density function (of the two nodes that can be eliminated). Nodes with high probability are retained. We call this the \emph{prior criterion}. We shall see that for symmetric distributions and rules, this criterion does not apply (nodes have equal probability), which will be discussed in the next section.

The Smolyak cubature rule has been determined for several sets of reduced Gauss quadrature rules (see Figure~\ref{fig:smolyaknested}, here the standard normal distribution is used) using the prior criterion. If two nodes have equal probability, the node which is most far from the center is removed. Comparing this to the Smolyak cubature rules which were determined previously (see Figure~\ref{fig:smolyak2d}) yields that the number of nodes is the same as for the Clenshaw--Curtis quadrature rule, but the weights are positive (hence \eqref{eq:smolnorm} can be used) and the location of the nodes is dependent on the distribution.

In the plots of the quadrature rules (below the sparse grids in Figure~\ref{fig:smolyaknested}) it is clearly visible that the quadrature rules are not symmetric but do have positive weights. We extend the reduction step such that the rules are symmetric.

\begin{figure}
	\centering
	\begin{minipage}{.3\textwidth}
		\centering
		\includegraphics[width=\textwidth]{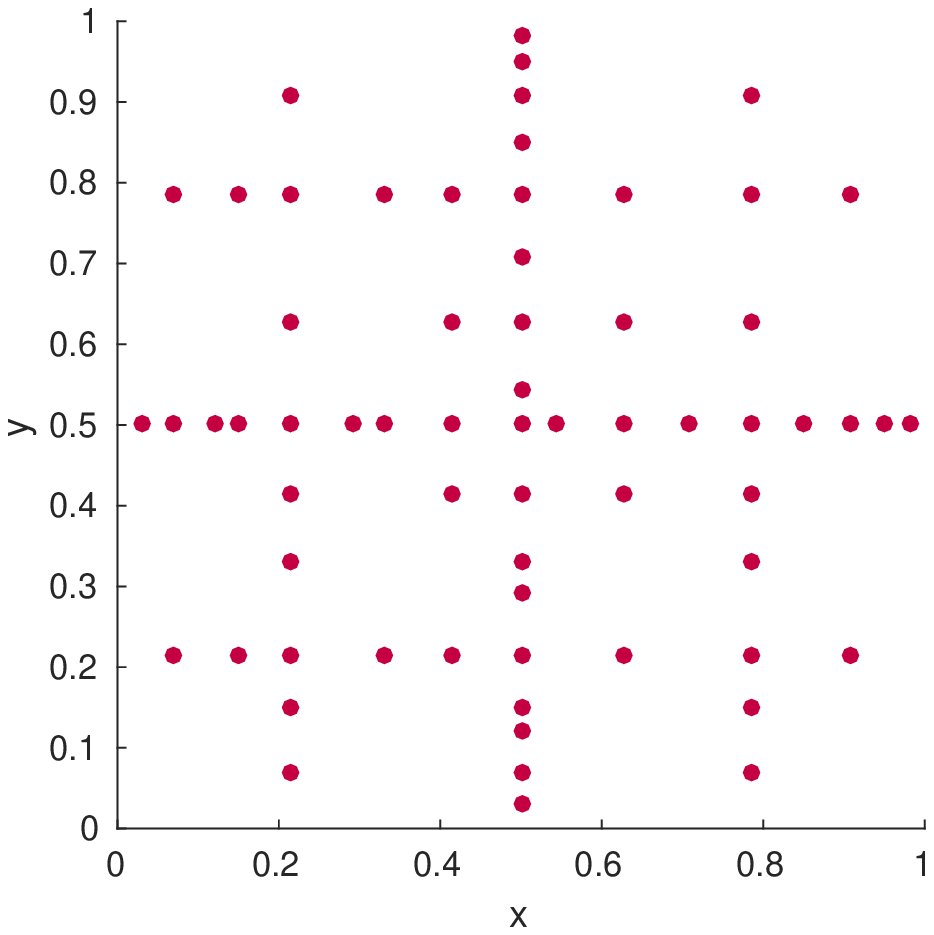}
		\includegraphics[width=\textwidth]{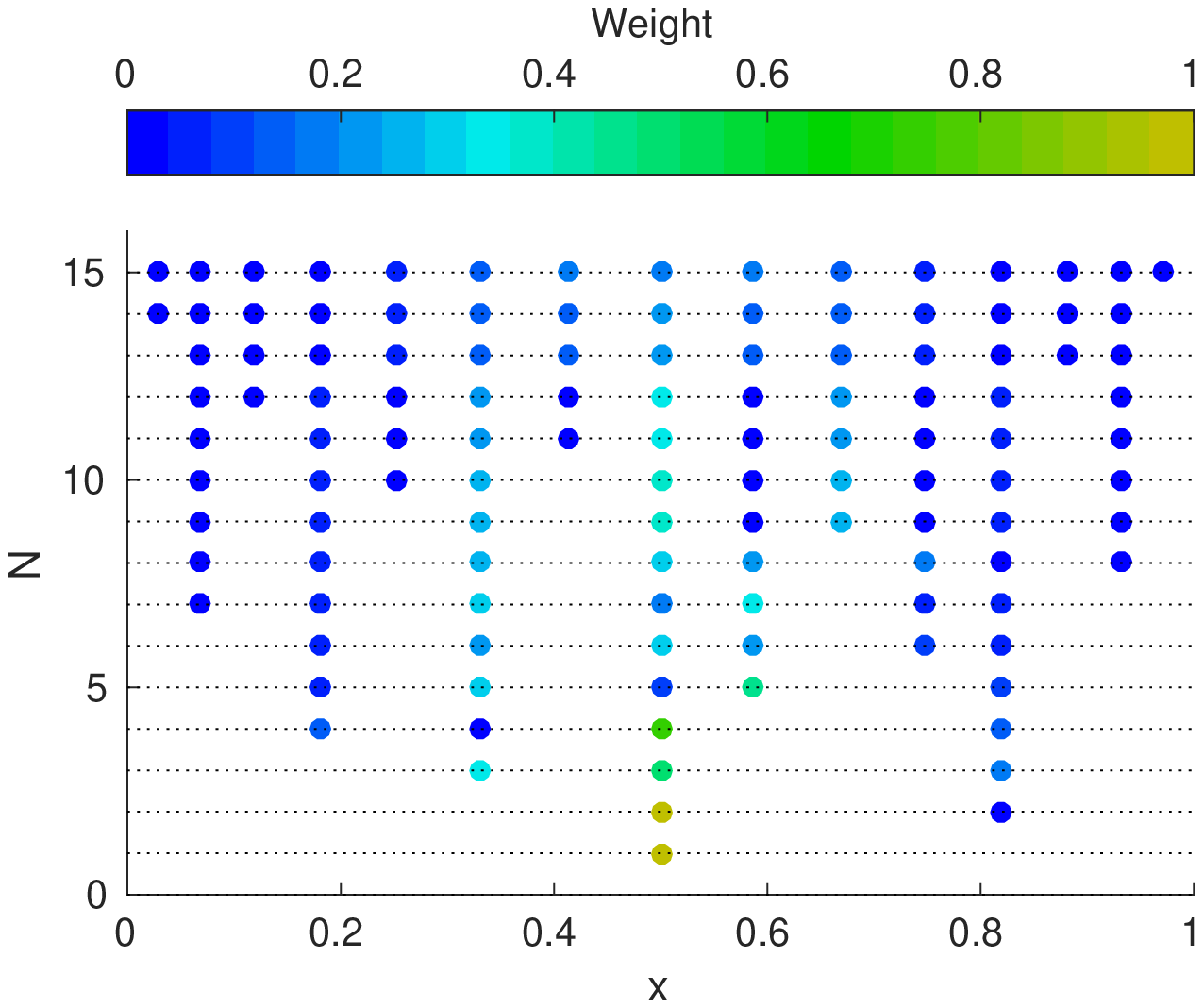}
		\subcaption{Reduced~Gauss--Jacobi}
		\label{fig:smolyaknestedbeta23}
	\end{minipage}
	\begin{minipage}{.3\textwidth}
		\centering
		\includegraphics[width=\textwidth]{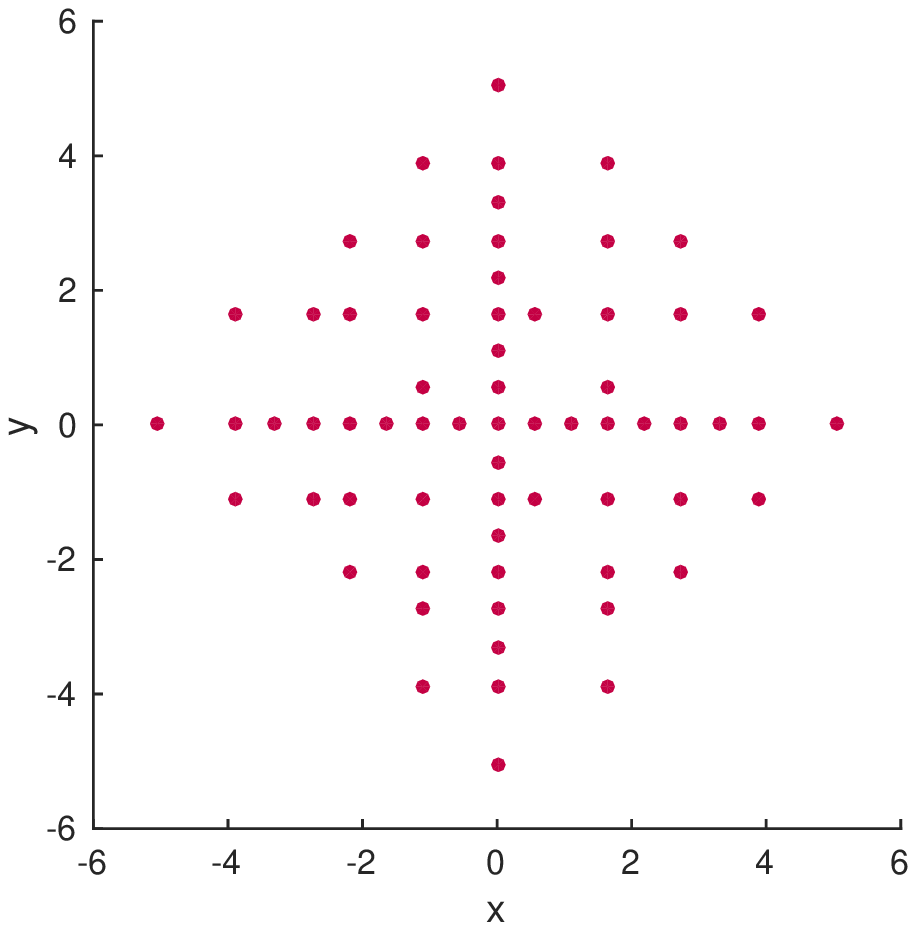}
		\includegraphics[width=\textwidth]{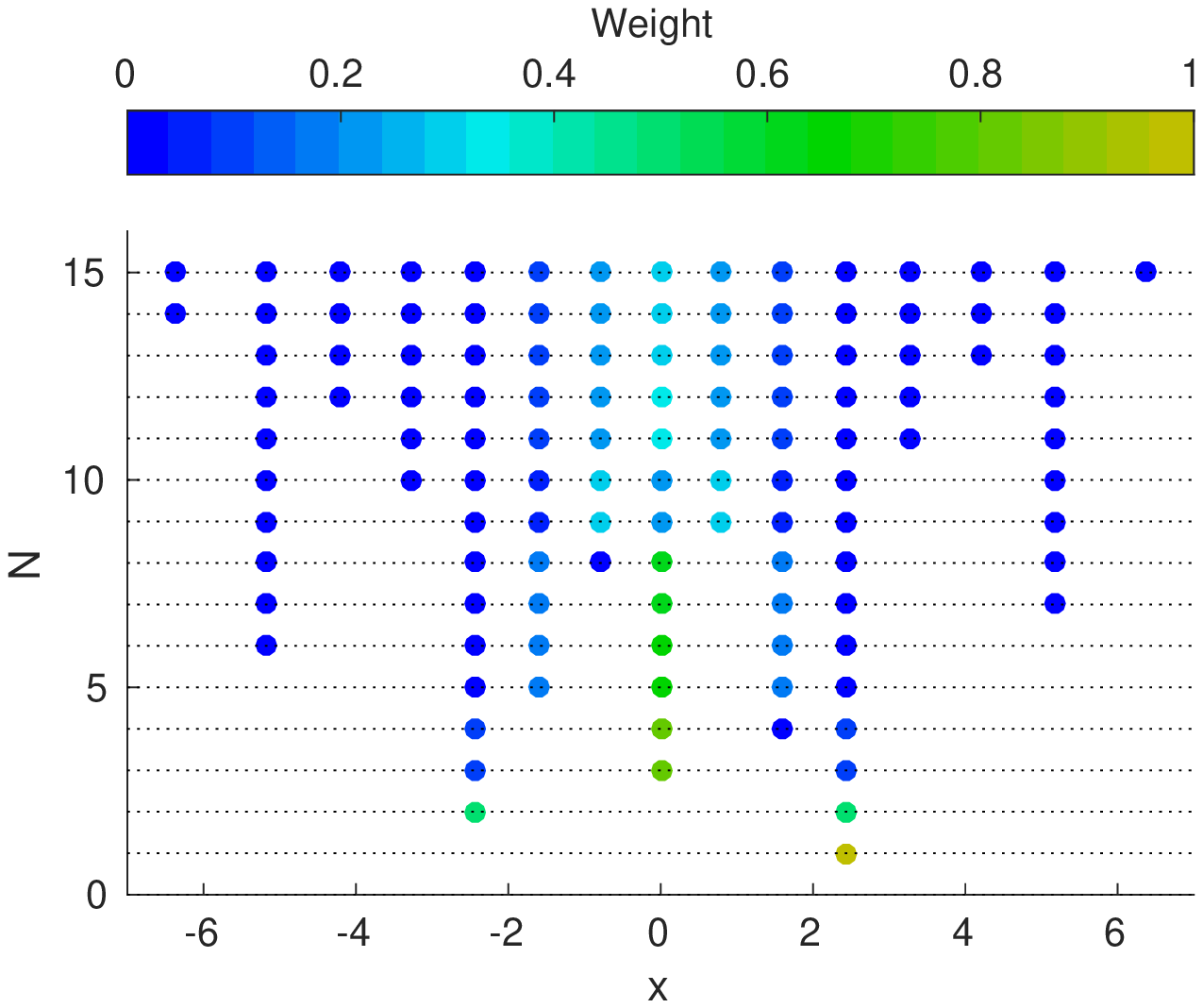}
		\subcaption{Reduced~Gauss--Hermite}
		\label{fig:smolyaknestednorm01}
	\end{minipage}
	\begin{minipage}{.3\textwidth}
		\centering
		\includegraphics[width=\textwidth]{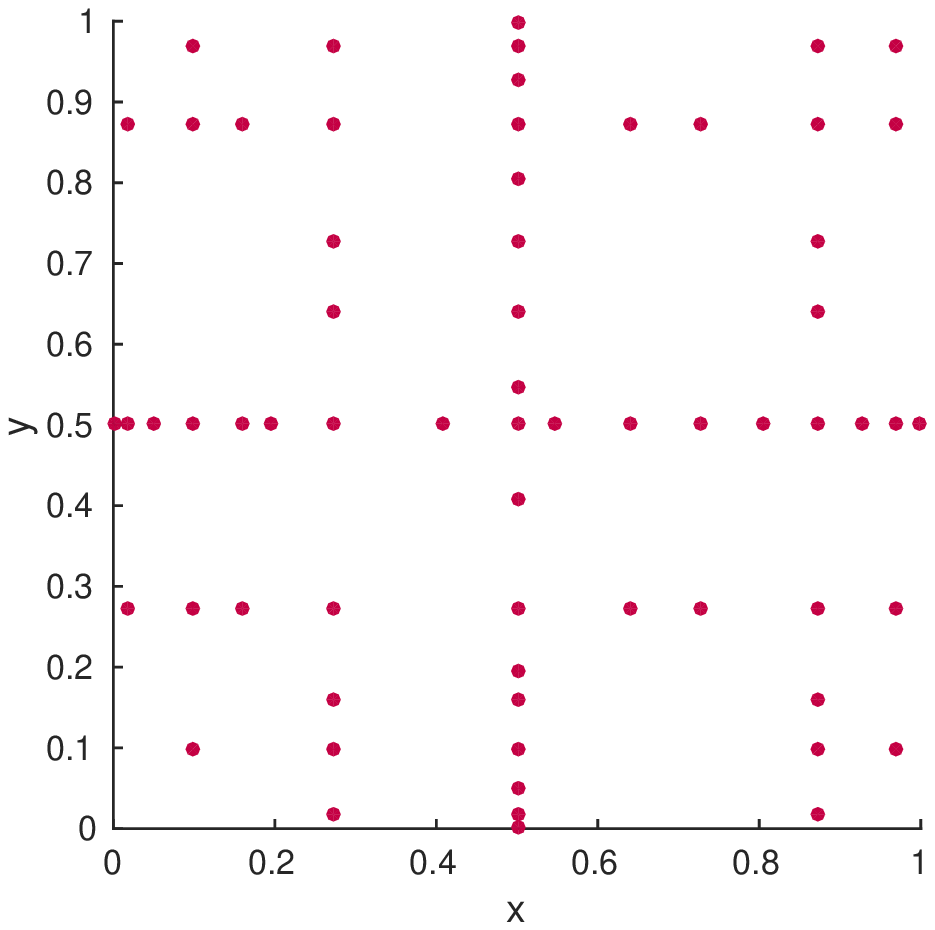}
		\includegraphics[width=\textwidth]{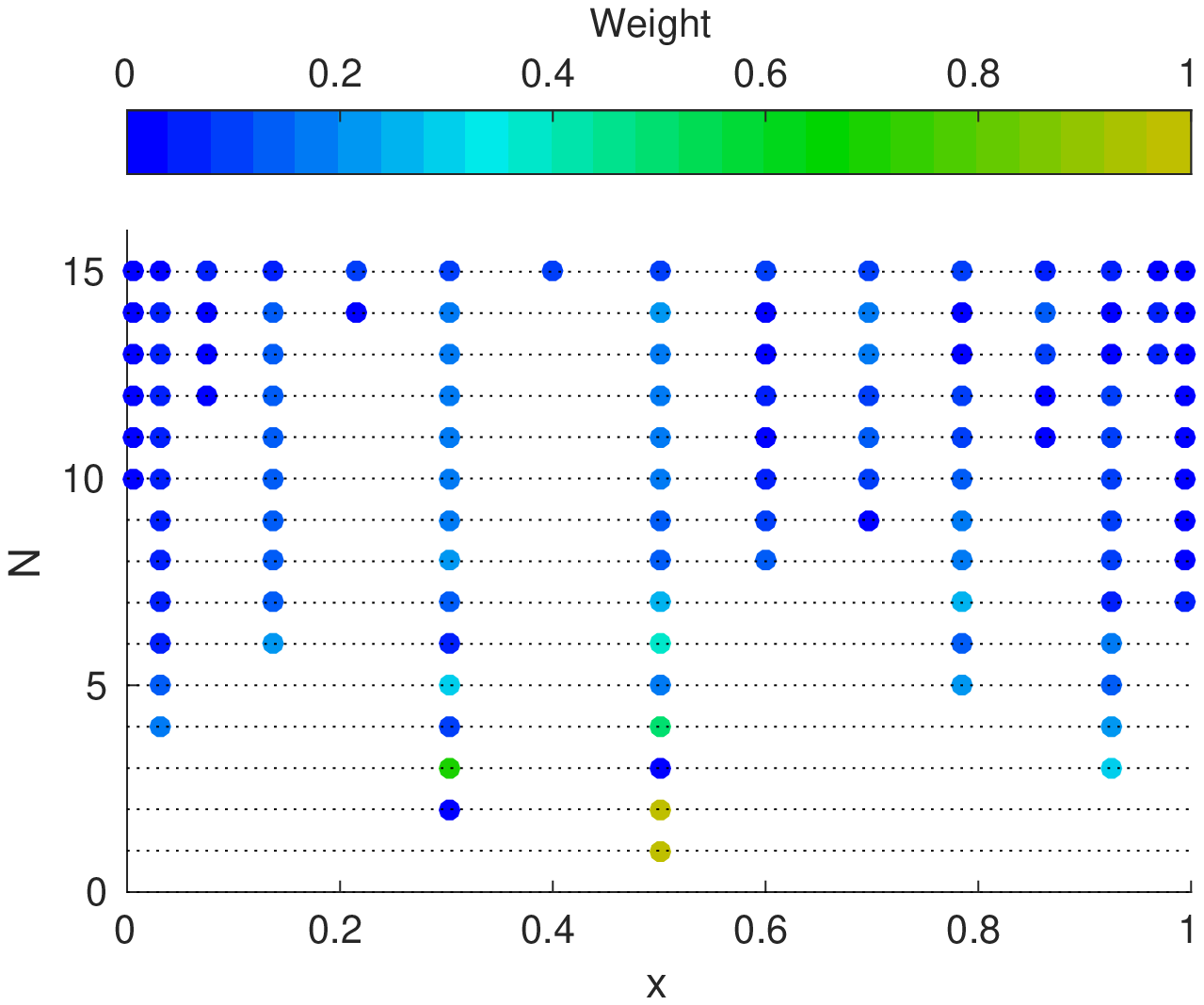}
		\subcaption{Reduced~Gauss--Legendre}
		\label{fig:smolyaknesteduni}
	\end{minipage}
	\caption{Two-dimensional Smolyak cubature rule nodes of the reduced quadrature rule (shown below the sparse grids for various numbers of nodes $N$). All grids consist of 65 nodes.}
	\label{fig:smolyaknested}
\end{figure}

\subsection{Symmetry}
The reduction step does not necessarily keep a symmetric quadrature rule symmetric, because nodes are generally removed one-by-one. This is undesirable and does not happen if the null vector used in the procedure has the same symmetry as the weights, because eliminating one weight then automatically eliminates the symmetric weight.

Such a symmetric null vector always exists, as shown in the following lemma. The key notion is that to keep a symmetric quadrature rule symmetric two nodes have to be removed, which can be implemented by removing two rows from the Vandermonde-matrix (i.e.\ constructing $V_{-2}$ instead of $V_{-1}$). The proof is constructive, providing an algorithm for the reduction.

\begin{lemma}
	\label{lmm:symquad}
	There exists a symmetric null vector of $V_{-2}$.
\end{lemma}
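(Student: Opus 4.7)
The plan is to exploit the even/odd decomposition of both the monomial basis and the vector space $\mathbb{R}^N$. Without loss of generality I would assume the distribution is symmetric about $0$, so that the nodes $\{\xi_k\}$ split into pairs $\{\xi_k, -\xi_k\}$, with possibly one central node at $0$ when $N$ is odd. Call a vector $\mathbf{c} \in \mathbb{R}^N$ \emph{symmetric} if $c_k$ equals $c_{k'}$ whenever $\xi_k = -\xi_{k'}$; the subspace $S$ of symmetric vectors has dimension $\lceil N/2 \rceil$.

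The key observation is that the rows of $V_{-2}$ indexed by odd monomials $\xi^{2l+1}$ vanish identically on $S$: for a symmetric $\mathbf{c}$, the contributions of a symmetric pair cancel because $\xi_k^{2l+1} c_k + (-\xi_k)^{2l+1} c_k = 0$, while a central node at $0$ contributes nothing. Hence the condition $V_{-2}\mathbf{c} = 0$ restricted to $S$ reduces to satisfying only the even-monomial rows, i.e.\ the rows with indices $j \in \{0, 2, 4, \dots\} \cap \{0, 1, \dots, N-3\}$, of which there are exactly $\lceil (N-2)/2 \rceil$.

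A direct count now gives the result. The symmetric subspace $S$ has dimension $\lceil N/2 \rceil$ and the relevant linear system on $S$ imposes $\lceil (N-2)/2 \rceil$ constraints. In both parities of $N$ this leaves
\[
	\lceil N/2 \rceil - \lceil (N-2)/2 \rceil = 1,
\]
so there exists at least one non-trivial symmetric null vector of $V_{-2}$, as required. This also explains why exactly two rows must be removed from $V$: removing only one row leaves $\lceil (N-1)/2 \rceil$ even-row constraints on $S$, which equals $\lceil N/2 \rceil$ and generically admits only the zero vector.

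The main obstacle I anticipate is merely bookkeeping the parities cleanly (the $N$-odd case requires treating the node at $0$ separately, since its coefficient is automatically ``symmetric'' and contributes a degree of freedom not coming from a pair), and perhaps giving a constructive recipe for the null vector—for instance, by symmetrising any null vector $\mathbf{c}$ of $V_{-2}$ via $\tilde{c}_k = \tfrac{1}{2}(c_k + c_{\sigma(k)})$, where $\sigma$ is the involution pairing opposite nodes, and verifying using $\xi_{\sigma(k)}^j = (-1)^j \xi_k^j$ that $\tilde{\mathbf{c}}$ remains in the kernel. The dimension count then guarantees that the symmetrisation is non-trivial for at least one choice of $\mathbf{c}$, yielding an algorithmically usable symmetric null vector.
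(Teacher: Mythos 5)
Your argument is correct and is essentially the paper's own proof in a more abstract dress: restricting $V_{-2}$ to the symmetric subspace $S$ and noting that the odd-power rows vanish there is exactly the paper's construction of $V'_{-2}$ and $A_{-2}$ (the matrix of your restricted system in the pair-basis of $S$), and your count $\lceil N/2\rceil - \lceil (N-2)/2\rceil = 1$ is the paper's observation that $A_{-2}$ has one more column than rows in both parities. The only cosmetic difference is that the paper phrases the conclusion via an explicit wide matrix rather than rank--nullity, and then unfolds its null vector into the symmetric null vector of $V_{-2}$, just as you describe.
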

\begin{proof}
	The principle of the proof is to (i) construct a matrix $V'_{-2}$ that encodes the symmetry property, (ii) prove that this matrix is singular and (iii) state a procedure to derive the null vector of this matrix.

	Let $\{\xi_k\}_{k=1}^N$ be the nodes of an $N$-node symmetric quadrature rule.

	A case distinction is made. First, let $N$ be even. Without loss of generality, we assume that $\xi_1 < \xi_2 < \dots < \xi_N$ and that the quadrature rule is symmetric around $0$. Then the nodes can be written as follows:
	\begin{equation}
		\{\xi_1, \xi_2, \dots, \xi_N\} = \{\xi_1, \xi_2, \dots, \xi_{\frac{N}{2}}, -\xi_{\frac{N}{2}}, \dots, -\xi_2, -\xi_1\}.
	\end{equation}
	Consider the following matrix:
	\begin{equation}
		V'_{-2} = \begin{pmatrix}
			\xi_1^0 + \xi_N^0 & \xi_2^0 + \xi_{N-1}^0 & \dots & \xi_{\frac{N}{2}}^0 + \xi_{N - \frac{N}{2} + 1}^0 \\
			\xi_1^1 + \xi_N^1 & \xi_2^1 + \xi_{N-1}^1 & \dots & \xi_{\frac{N}{2}}^1 + \xi_{N - \frac{N}{2} + 1}^1 \\
			\xi_1^2 + \xi_N^2 & \xi_2^2 + \xi_{N-1}^2 & \dots & \xi_{\frac{N}{2}}^2 + \xi_{N - \frac{N}{2} + 1}^2 \\
			\vdots & \vdots & \ddots & \vdots \\
			\xi_1^{N-3} + \xi_N^{N-3} & \xi_2^{N-3} + \xi_{N-1}^{N-3} & \dots & \xi_{\frac{N}{2}}^{N-3} + \xi_{N - \frac{N}{2} + 1}^{N-3}
		\end{pmatrix}.
	\end{equation}
	The matrix $V'_{-2}$ is constructed by combining columns of $V_{-2}$. Each column of $V_{-2}$ is used exactly once. $V'_{-2}$ is not a square matrix, so it is not trivial to see that a non-trivial null vector exists. However, if a null vector $\mathbf{c}'$ exists, it can easily be transformed into a symmetric null vector of $V_{-2}$ using $\mathbf{c} = \trans{(c'_1, c'_2, \dots, c'_{\frac{N}{2}}, c'_{\frac{N}{2}}, \dots, c'_2, c'_1)}$.

	There always exists such a null vector $\mathbf{c}'$ because for $p$ odd, it is true that
	\begin{equation}
		\xi_k^p + \xi_{N-k+1}^p = \xi_k^p + (-1)^p \xi_k^p = 0.
	\end{equation}
	Therefore, determining a null vector of $V'_{-2}$ is equivalent to determining a null vector of the following matrix:
	\begin{equation}
		A_{-2} = \begin{pmatrix}
			2 \xi_1^0 & 2 \xi_2^0 & \dots & 2 \xi_{\frac{N}{2}}^0 \\
			2 \xi_1^2 & 2 \xi_2^2 & \dots & 2 \xi_{\frac{N}{2}}^2 \\
			\vdots & \vdots & \ddots & \vdots \\
			2 \xi_1^{N-4} & 2 \xi_2^{N-4} & \dots & 2 \xi_{\frac{N}{2}}^{N-4}
		\end{pmatrix}.
	\end{equation}
	Here, the rows of $V_{-2}$ consisting of zeros are removed and it is used that $\xi_1^2 = \xi_N^2$, $\xi_2^2 = \xi_{N-1}^2$, etc. $A_{-2}$ is an $\left(\frac{N}{2}-1\right) \times \frac{N}{2}$-matrix, which is singular, hence always has a non-trivial null vector.

	If $N$ is odd, the same principle can be applied with the nodes:
	\begin{equation}
		\{\xi_1, \xi_2, \dots, \xi_N\} = \{ \xi_1, \xi_2, \dots, \xi_{\left\lfloor \frac{N}{2} \right\rfloor},0, -\xi_{\left\lfloor \frac{N}{2} \right\rfloor}, \dots, -\xi_1 \}.
	\end{equation}
	Therefore, after constructing $V'_{-2}$ a null vector needs to be determined of the following matrix:
	\begin{equation}
		A_{-2} = \begin{pmatrix}
			2 \xi_1^0 & 2 \xi_2^0 & \dots & 2 \xi_{\left\lfloor \frac{N}{2} \right\rfloor}^0 & 1 \\
			2 \xi_1^2 & 2 \xi_2^2 & \dots & 2 \xi_{\left\lfloor \frac{N}{2} \right\rfloor}^2 & 0 \\
			\vdots & \vdots & \ddots & \vdots & \vdots \\
			2 \xi_1^{N-3} & 2 \xi_2^{N-3} & \dots & 2 \xi_{\left\lfloor \frac{N}{2} \right\rfloor}^{N-3} & 0
		\end{pmatrix}.
	\end{equation}
	In this case, $A_{-2}$ is an $\left\lfloor \frac{N}{2} \right\rfloor \times (\left\lfloor \frac{N}{2} \right\rfloor + 1)$-matrix, which is again singular.

	Concluding, in both cases ($N$ odd or even) there exists a symmetric null vector of $V_{-2}$, that can be constructed using a null vector of matrix $A_{-2}$.
\end{proof}

The previous lemma leads to the following main theorem about reduced \emph{symmetric} quadrature rules with positive weights. Note that it is important to remove two nodes each time in both cases ($N$ odd or even), because only then it is possible to start with a quadrature rule of odd length and iteratively remove nodes until a quadrature rule of only the middle node is obtained.

\begin{theorem}
	Let $\{\xi_1, \dots, \xi_N\}$ form an $N$-node symmetric quadrature rule with positive weights of degree $N-1$. Then there exist $\xi_i$ and $\xi_j$ with $i \neq j$ such that $\{\xi_1, \dots, \xi_N\} \setminus \{\xi_i, \xi_j\}$ forms an $(N-2)$-node symmetric quadrature rule with positive weights of degree $N-3$.
\end{theorem}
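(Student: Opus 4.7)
The plan is to combine the constructive Carath\'eodory argument from above with the symmetric null vector supplied by Lemma~\ref{lmm:symquad}. Let $\mathbf{w} = (w_1, \dots, w_N)^\top$ be the weight vector, satisfying $V_{-2}\mathbf{w} = \mathbf{b}$ with $\mathbf{b}$ collecting the first $N-2$ moments of $P$. Lemma~\ref{lmm:symquad} supplies a symmetric vector $\mathbf{c}$ with $V_{-2}\mathbf{c} = \mathbf{0}$, so for every $\alpha \in \mathbb{R}$ the shifted vector $\mathbf{w}' \coloneqq \mathbf{w} - \alpha\mathbf{c}$ still satisfies $V_{-2}\mathbf{w}' = \mathbf{b}$ and, because $\mathbf{w}$ and $\mathbf{c}$ are both symmetric, $\mathbf{w}'$ is symmetric too.

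Next I would imitate the Carath\'eodory proof verbatim: after replacing $\mathbf{c}$ by $-\mathbf{c}$ if necessary, choose
\begin{equation*}
	\alpha = \min_k \left\{ \frac{w_k}{c_k} : c_k > 0 \right\} = \frac{w_{k_0}}{c_{k_0}},
\end{equation*}
which makes every $w'_k \geq 0$ and $w'_{k_0} = 0$. Symmetry of $\mathbf{c}$ and $\mathbf{w}$ then forces $w'_{N-k_0+1} = 0$ as well. Provided $k_0 \neq N - k_0 + 1$, discarding these two vanishing weights yields the desired $(N-2)$-node symmetric quadrature rule with positive weights that integrates polynomials up to degree $N-3$ exactly.

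The one obstacle I foresee is the odd-$N$ case, where $k_0$ could coincide with the middle index $(N+1)/2$, corresponding to the node $\xi = 0$. I would rule this out by exploiting the sign freedom in $\mathbf{c}$. Since the first row of the reduced matrix $A_{-2}$ appearing in the proof of Lemma~\ref{lmm:symquad} has strictly positive entries, no nontrivial null vector of $A_{-2}$ can be sign-definite; hence $\mathbf{c}$ has both positive and negative entries, and since the middle contributes a single component these cannot all sit at the middle. If the Carath\'eodory minimum is attained uniquely at the middle for the choice $+\mathbf{c}$, then for $-\mathbf{c}$ the minimisation runs over the (non-empty) set of originally negative components of $\mathbf{c}$, and that set necessarily contains a symmetric non-middle pair, which is then simultaneously eliminated. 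For even $N$ no middle node exists and $k_0 \neq N - k_0 + 1$ holds automatically, so in either parity two distinct removable nodes $\xi_i, \xi_j$ are produced, completing the argument.
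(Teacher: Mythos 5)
Your proposal is correct and follows essentially the same route as the paper's proof: take the symmetric null vector from Lemma~\ref{lmm:symquad}, apply the Carath\'eodory shift to the weights, and use the sign freedom between $\mathbf{c}$ and $-\mathbf{c}$ to guarantee that, in the odd-$N$ case, a symmetric non-middle pair is eliminated. Your explicit justification that the flipped sign avoids the middle node (via the positive first row of $A_{-2}$ forcing mixed signs in the null vector) is in fact slightly more detailed than the paper's own argument, but it is the same idea.
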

\begin{proof}
	The proof follows directly from Lemma~\ref{lmm:symquad}. Let $A_{-2}$ as in Lemma~\ref{lmm:symquad} be given and let $\mathbf{c}$ be a null vector of $A_{-2}$. There are two cases:
	\begin{itemize}
		\item If $N$ is even: both $\mathbf{c}$ and $-\mathbf{c}$ yield the removal of two nodes in the reduction step.
		\item If $N$ is odd: it is possible that either $\mathbf{c}$ or $-\mathbf{c}$ yields the removal of the middle node, which would result in the removal of just one node. However, either $\mathbf{c}$ or $-\mathbf{c}$ yields the removal of two nodes.
	\end{itemize}
	In both cases, pick $i = k_0$ and $j = N-k_0+1$, where $k_0$ is from the proof of Carath\'eodory's theorem.
\end{proof}

From this theorem and the case distinction between quadrature rules of even and odd length, an algorithm can be formulated that generates the nested quadrature rule keeping weights positive and a symmetric quadrature rule symmetric (see Algorithm~\ref{alg:redquadrule}). Although the matrices $V_{-1}$ and $A_{-2}$ can become ill-conditioned for large $N$, we did not observe any numerical issues in determining null vectors of these matrices for $N$ up to $2^{10}$.

\begin{algorithm}[h]
\caption{Determining the reduced quadrature rule}
\label{alg:redquadrule}
\begin{algorithmic}[1]
\Require {Quadrature rule nodes $\{\xi_1, \xi_2, \dots, \xi_N\}$ and weights $\{w_1, w_2, \dots, w_N\}$ of degree $N-1$}
\Ensure {Non-negative weights $\{w_1^*, w_2^*, \dots, w_N^*\}$ having \emph{either} two weights equal to 0 if the original quadrature rule is symmetric \emph{or} one weight equal to 0 otherwise. Using these weights, the quadrature rule has either degree $N-3$ or $N-2$ respectively.}
\Statex ~
\If {quadrature rule $\{\xi_1, \xi_2, \dots, \xi_N\}$ is symmetric}
\State Construct $A_{-2}$ from Lemma~\ref{lmm:symquad}
\State Determine a null vector $\mathbf{c}^*$ of $A_{-2}$
\State Using $\mathbf{c}^*$, construct a symmetric null vector $\mathbf{c}$ of the matrix $V_{-2}$, where $V$ is the Vandermonde-matrix.
\Else
\State Construct the matrix $V_{-1}$, where $V$ is the Vandermonde-matrix.
\State Determine a null vector $\mathbf{c}$ of $V_{-1}$
\EndIf
\State $\alpha^{(1)} \gets \min_{k=1, \dots, N} \left\{\frac{w_k}{c_k} : c_k > 0\right\}$
\State $\alpha^{(2)} \gets \max_{k=1, \dots, N} \left\{-\frac{w_k}{c_k} : c_k < 0\right\}$
\State $w^{(1)}_k \gets w_k - \alpha^{(1)} c_k$ and $w^{(2)}_k \gets w_k + \alpha^{(2)} c_k$ for $k = 1, \dots, N$.
\Statex ~
\State $N_Z^{(1)} \gets \#\left\{w^{(1)}_k = 0 \mid k = 1, \dots, N\right\}$ 
\State $N_Z^{(2)} \gets \#\left\{w^{(2)}_k = 0 \mid k = 1, \dots, N\right\}$
\If {$N_Z^{(1)} = 1$ and $N_Z^{(2)} = 2$}
\State \textbf{return} $\{w^{(2)}_k\}$
\ElsIf {$N_Z^{(2)} = 1$ and $N_Z^{(1)} = 2$}
\State \textbf{return} $\{w^{(1)}_k\}$
\Else
\State \emph{Here, a selection criterion can be applied:}
\State \textbf{return} either $\{w^{(1)}_k\}$ or $\{w^{(2)}_k\}$
\EndIf
\end{algorithmic}
\end{algorithm}

Using this algorithm, symmetric reduced quadrature rules can be generated using the prior criterion. The resulting Smolyak cubature rules are therefore also symmetric (see Figure~\ref{fig:smolyaknestedsym} for examples). The symmetry is also clearly visible in the plots of the quadrature rules.

\begin{figure}
	\centering
	\begin{minipage}{.3\textwidth}
		\includegraphics[width=\textwidth]{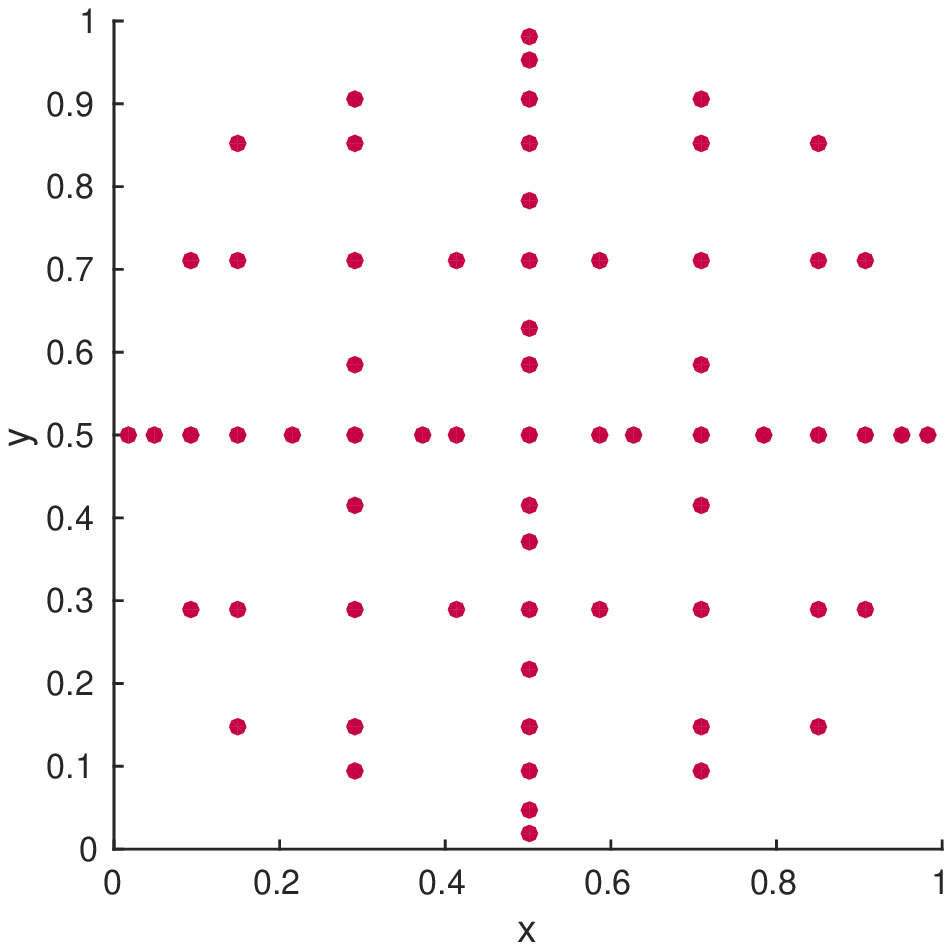}
		\includegraphics[width=\textwidth]{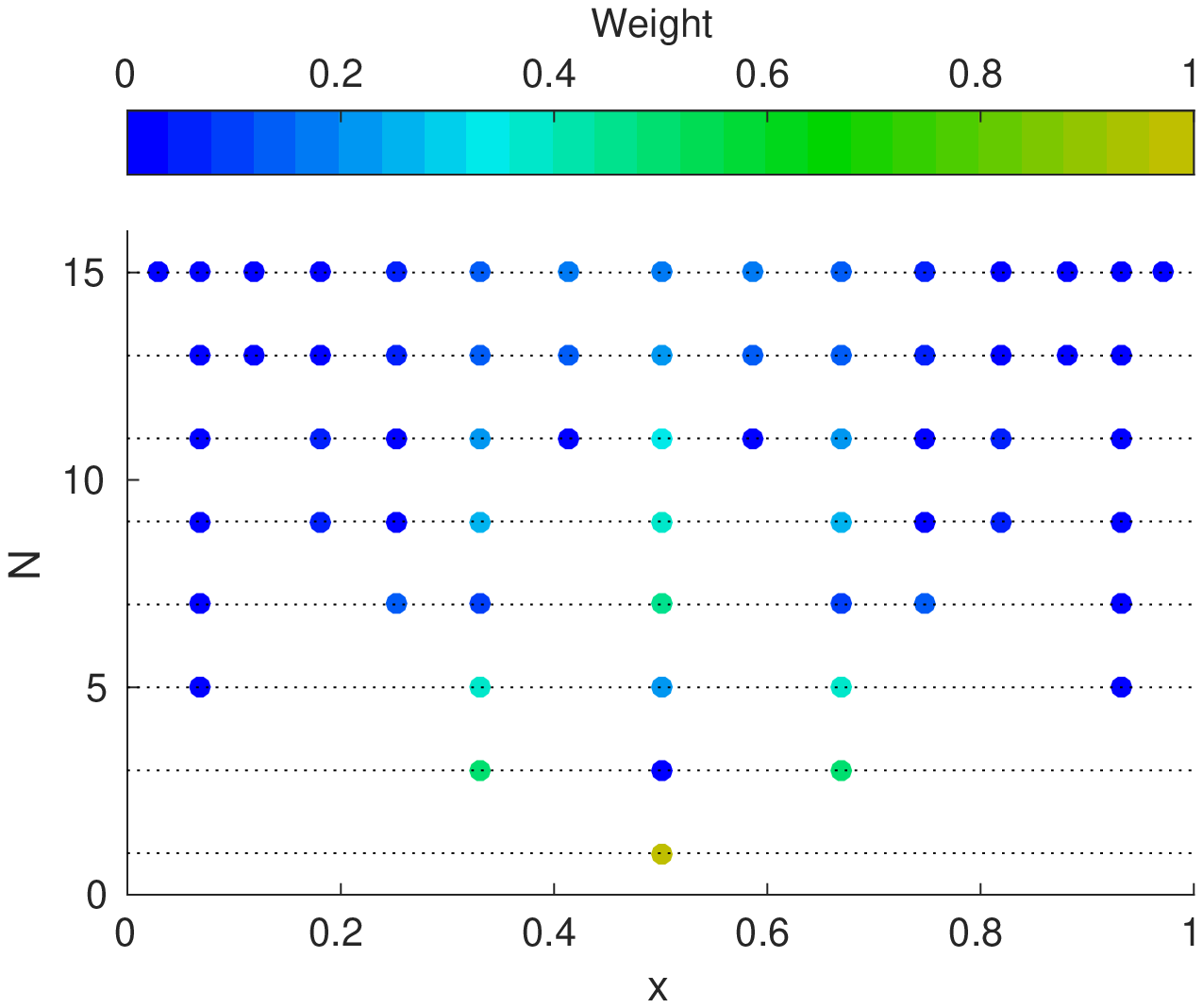}
		\subcaption{Reduced~Gauss--Jacobi}
		\label{fig:smolyaknestedsymbeta23}
	\end{minipage}
	\begin{minipage}{.3\textwidth}
		\includegraphics[width=\textwidth]{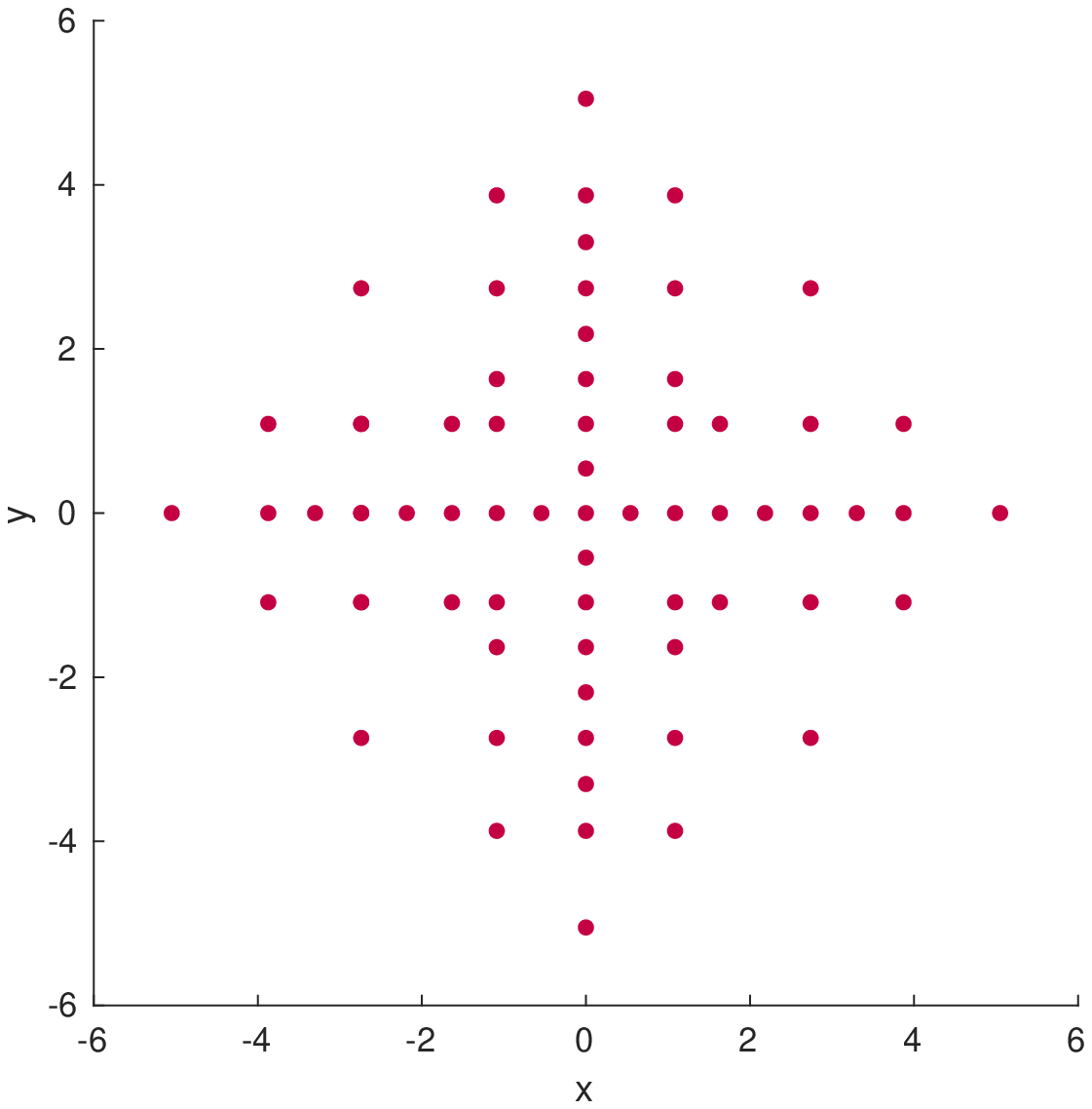}
		\includegraphics[width=\textwidth]{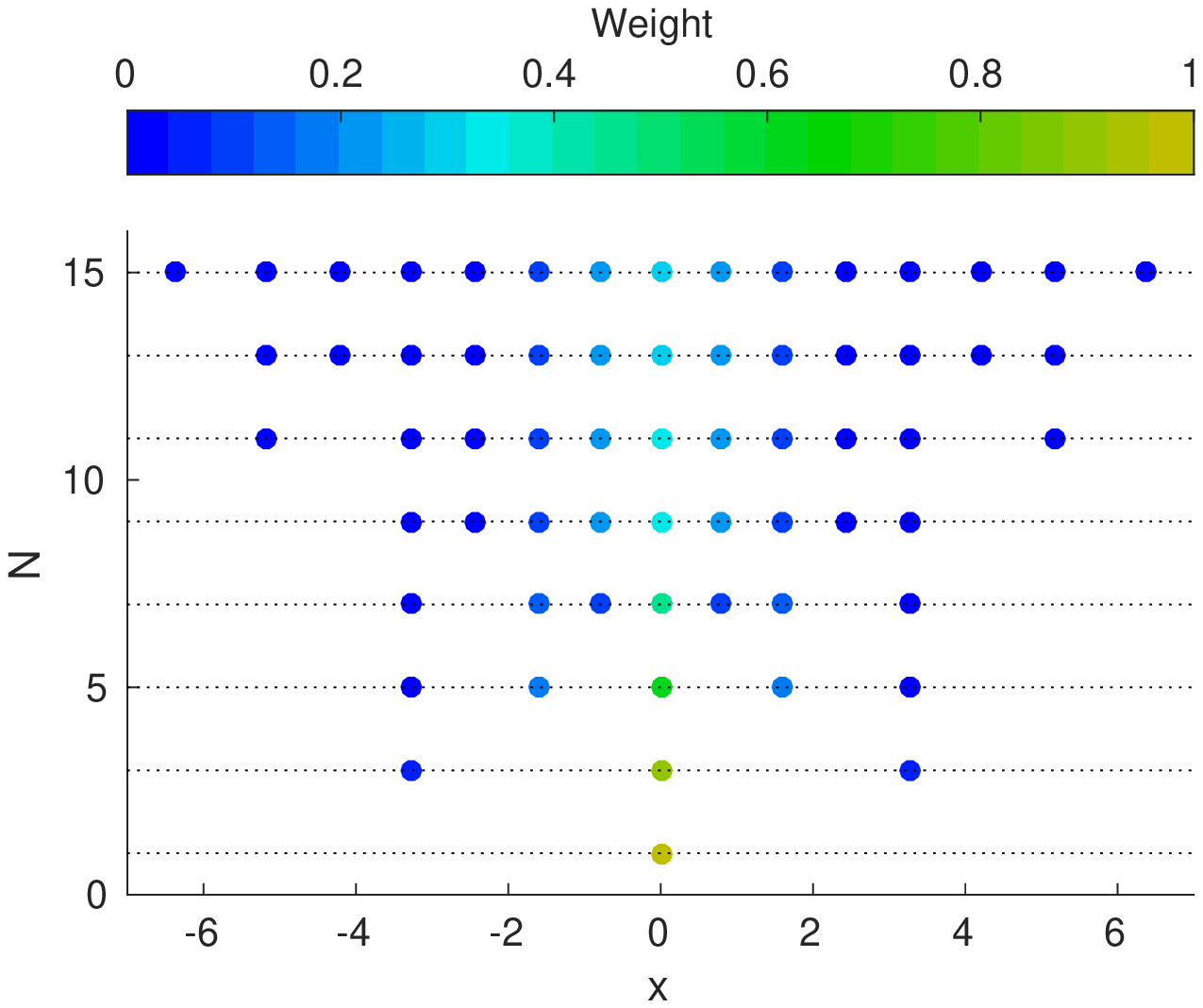}
		\subcaption{Reduced~Gauss--Hermite}
		\label{fig:smolyaknestedsymnorm01}
	\end{minipage}
	\begin{minipage}{.3\textwidth}
		\includegraphics[width=\textwidth]{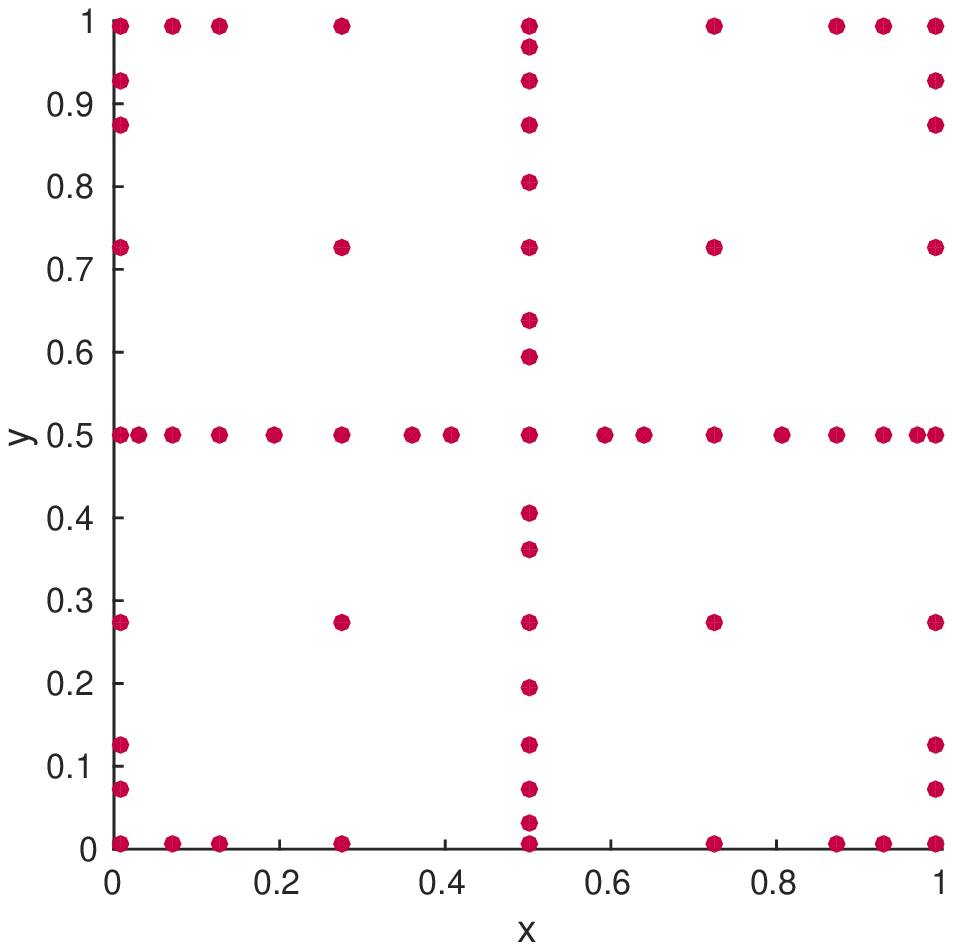}
		\includegraphics[width=\textwidth]{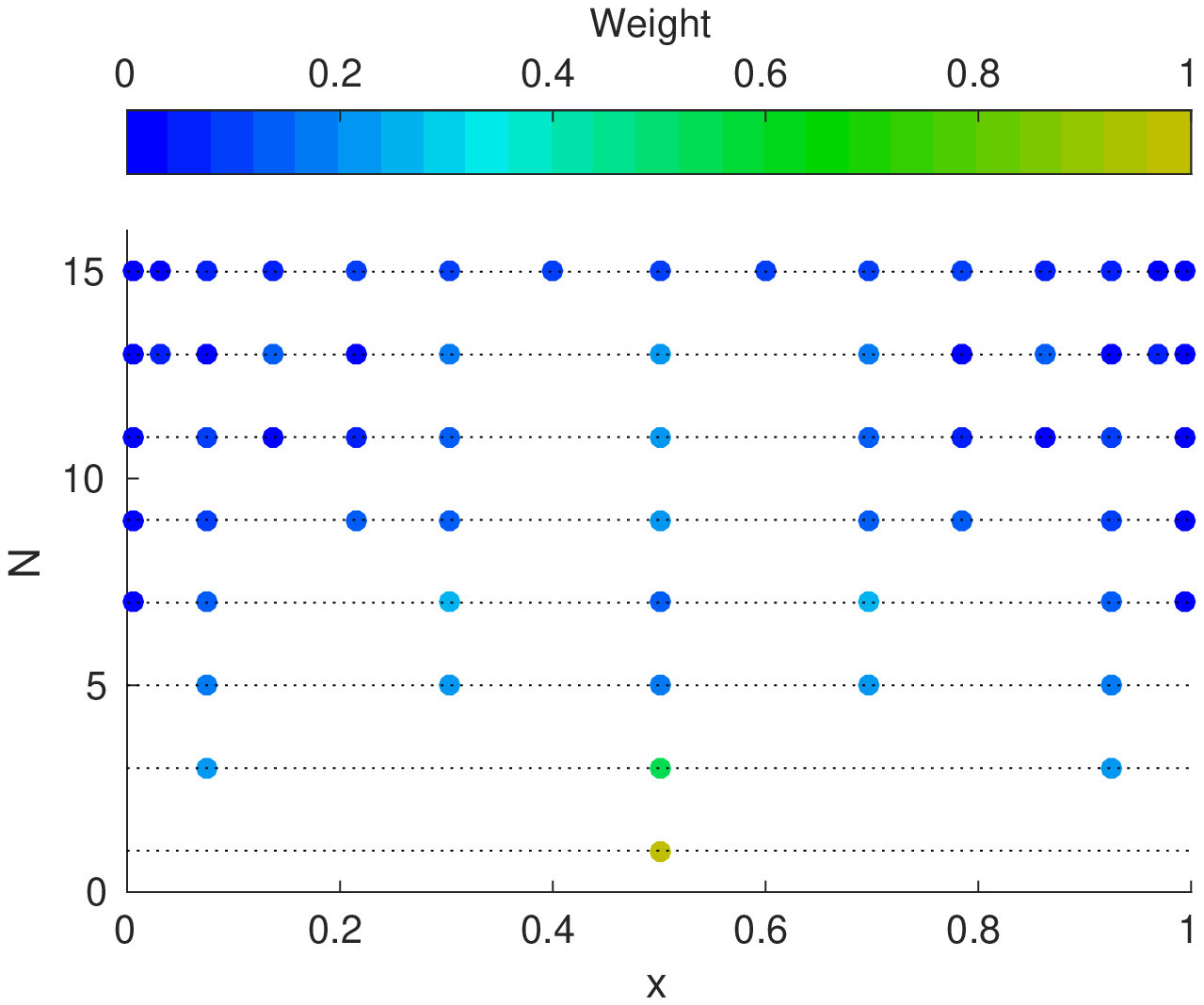}
		\subcaption{Reduced~Gauss--Legendre}
		\label{fig:smolyaknestedsymuni}
	\end{minipage}
	\caption{Two-dimensional Smolyak cubature rule nodes of the symmetric reduced quadrature rule (shown below the sparse grids for various numbers of nodes $N$). All sparse grids consist of 65 nodes.}
	\label{fig:smolyaknestedsym}
\end{figure}

\section{Reduced cubature rules}
\label{sec:redcubrule}
In the previous section, a procedure has been outlined to generate a set of symmetric and nested quadrature rules with positive weights. Exactly the same principles can be applied to cubature rules, i.e.\ in a multi-dimensional setting.

The set-up is the same as in the previous section. First, the reduction step is introduced ignoring symmetry. This extension is straightforward. Then symmetries in multi-dimensional spaces are studied and a similar theory as in the one-dimensional case is developed regarding the symmetry of nested cubature rules.

\subsection{Multi-dimensional reduction step}
\label{subsec:redstep}
Let $\{\boldsymbol \xi_1, \dots, \boldsymbol \xi_N\}$ and $\{w_1, \dots, w_N\}$ be $N$ cubature nodes and positive weights in a $d$-dimensional space forming a cubature rule of degree $K$, with $N = \dim \mathbb{P}(K, d)$. Let $G$ be the $N \times N$ generalized Vandermonde-matrix, as introduced previously (see~\eqref{eq:genvandermondesys}). The goal is to determine a subset of nodes that forms a cubature rule of degree $K-1$, with positive weights. Such a cubature rule has a generalized Vandermonde-matrix of size $\dim \mathbb{P}(K-1, d) \times \dim \mathbb{P}(K-1, d)$.

The one-dimensional reduction step can be easily generalized to determine this nested cubature rule as follows. First, let $G$ be the generalized Vandermonde matrix again. Then $G_{-C}$ with $C = \dim \mathbb{P}(K, d) - \dim \mathbb{P}(K-1, d)$ has a $C$-dimensional null space\footnotemark.
\footnotetext{Recall that $A_{-k}$ is matrix $A$ without its last $k$ rows.}%
Applying Carath\'eodory's theorem iteratively to this matrix allows for the removal of $C$ columns, which yields the pursued $(\dim \mathbb{P}(K-1, d))$-node cubature rule with positive weights. 

Just as in the one-dimensional case, the choice of basis for the null space is not unique, and since the number of null vectors is larger in multiple dimensions, more freedom to select the node to be removed is available. Iteratively applying the prior criterion is again an option, and will again result in loss of symmetry.

In multi-dimensional spaces, a symmetric $K$-degree cubature rule with positive weights of $\dim \mathbb{P}(K, d)$ nodes is not trivial to derive. However, a good initial cubature rule can be determined using the introduced reduction step. Starting with a $K$-degree tensor product rule, nodes can be removed from this rule until $\dim \mathbb{P}(K, d)$ nodes are left and using this cubature rule, a set of nested cubature rules can be generated.

\subsection{Symmetries}
As in the one-dimensional case, the reduction step does not keep a symmetric cubature rule symmetric. In a multi-dimensional space, many different types of symmetries can be considered. We consider two types of reflectional symmetry:
\begin{enumerate}
	\item Symmetry along an axis, i.e.\ the plane of symmetry has the property $x^{(k)} = 0$, where $x^{(k)}$ is a coordinate. If a cubature rule is symmetric in this way in all dimensions, the planes of symmetry divide the space into $2^d$ orthants (multi-dimensional quadrants). We call this a \emph{type-1} symmetry (see Figure~\ref{fig:lmm:main1} for a sketch).
	\item Symmetry along a plane having $x^{(k)} = x^{(j)}$, where $x^{(k)}$ and $x^{(j)}$ are two coordinates. If a cubature rule is symmetric in this way in all dimensions, the planes of symmetry divide the space into $2^d$ orthants after a rotation over $\frac{1}{4} \pi$ of the complete basis. We call this a \emph{type-2} symmetry (see Figure~\ref{fig:lmm:main2} for a sketch).
\end{enumerate}
In a tensor product cubature rule, the first symmetry occurs if the rule is generated using a symmetric quadrature rule. The second symmetry occurs if one quadrature rule is used multiple times in several dimensions.

To preserve symmetry after the removal of nodes, the null vector used to remove the nodes must have the same symmetry. As in the one-dimensional case, it is not guaranteed that such a null vector exists. Under certain conditions such a null vector does exist. The theory is more cumbersome than in the one-dimensional case, but has the same general structure: to determine a symmetric null vector of $G_{-C}$, a matrix $G'$ is constructed and a proof is given that a null vector of $G'$ can be transformed into a null vector of $G_{-C}$. Dependencies in the row space of $G'$ finish the proof.

\begin{figure}
	\centering
	\begin{minipage}{.4\textwidth}
		\includegraphics[width=\textwidth]{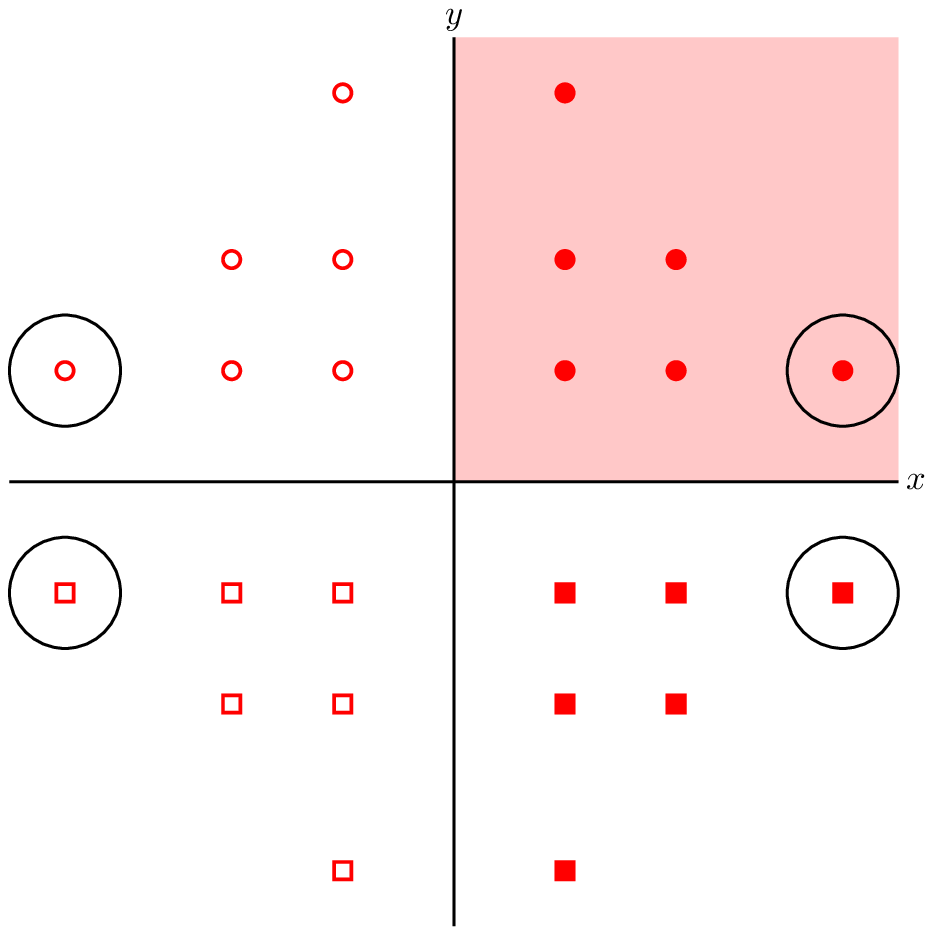}
		\subcaption{Type-1 symmetry}
		\label{fig:lmm:main1}
	\end{minipage}
	~
	\begin{minipage}{.4\textwidth}
		\includegraphics[width=\textwidth]{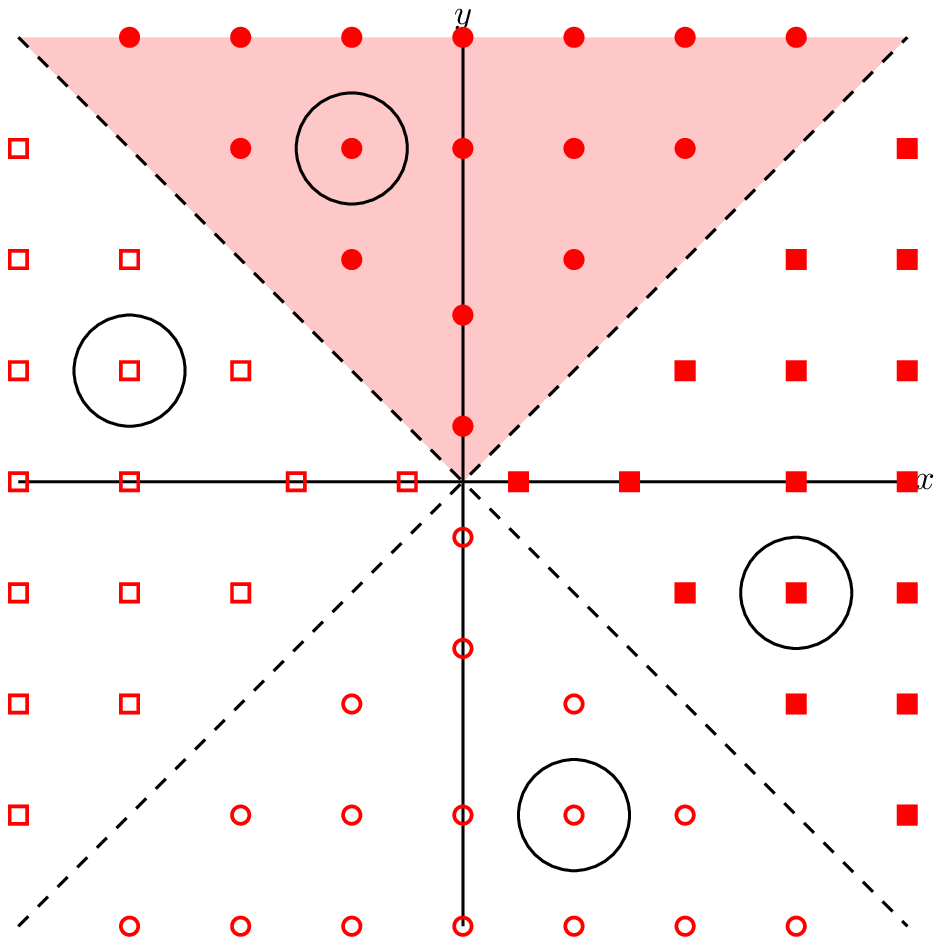}
		\subcaption{Type-2 symmetry}
		\label{fig:lmm:main2}
	\end{minipage}
	\caption{Visual proofs of Theorems~\ref{thm:main1} and \ref{thm:main2}. In both cases, removing a node in a pink region, which is an orthant or an orthant after $\frac{1}{4} \pi$ rotation, results into the removal of 4 nodes in total to keep the cubature rule symmetric. The number of nodes in the pink region is denoted by $N_{\bar{Q}}$ and the number of independent rows in $G_{-C}$ determines the number of nodes that can be removed.}
\end{figure}

\subsubsection{Type-1 symmetry}
First we demonstrate the existence of a suitable null vector. The proof has the same structure as the proof of Lemma~\ref{lmm:symquad}.

\begin{lemma}
	\label{lmm:main1}
	Let $\{\boldsymbol \xi_1, \dots, \boldsymbol \xi_N\}$ be a type-1 symmetric cubature rule of degree $K$ with positive weights $\{w_1, \dots, w_N\}$. Let $Q$ be an orthant and let $N_{\bar{Q}}$ be the number of cubature nodes in $\bar{Q}$. Then there exists a symmetric null vector of $G_{-C}$ if
	\begin{equation}
		\binom{\lfloor \frac{K}{2} \rfloor + d}{d} < N_{\bar{Q}}.
	\end{equation}
\end{lemma}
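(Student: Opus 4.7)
The plan is to mirror the structure of the proof of Lemma~\ref{lmm:symquad}: build an auxiliary matrix $G'$ by summing columns of $G_{-C}$ over the orbits of the type-1 reflection group, argue that a large block of rows of $G'$ vanishes identically, and then count the remaining rows to conclude that $G'$ has a nontrivial null vector which lifts to a symmetric null vector of $G_{-C}$.

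First I would set up the orbit structure. The type-1 symmetry group $\Gamma = \{-1, +1\}^d$ acts on $\mathbb{R}^d$ by coordinate-wise sign changes; the cubature nodes decompose into $\Gamma$-orbits, and type-1 symmetry forces the weight to be constant along each orbit. Each orbit has a unique representative in $\bar{Q}$, so the number of orbits equals $N_{\bar{Q}}$. Any $\Gamma$-symmetric vector $\mathbf{c}$ for $G_{-C}$, i.e.\ one constant on orbits, is therefore uniquely encoded by an $N_{\bar{Q}}$-vector $\mathbf{c}'$ indexed by orbits, and $G_{-C}\mathbf{c} = G'\mathbf{c}'$ where $G'$ is the $\dim \mathbb{P}(K-1,d) \times N_{\bar{Q}}$ matrix whose column for orbit $O$ is obtained by summing over $\boldsymbol \xi \in O$ the corresponding column of $G_{-C}$. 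Hence it suffices to produce a nontrivial null vector of $G'$.

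Next I would identify the zero rows of $G'$. For a monomial $m_j(\mathbf{x}) = x_1^{a_1} \cdots x_d^{a_d}$ and an orbit $O$ with representative $\boldsymbol \xi^{*} \in \bar{Q}$ and support $S = \{ i : \xi^{*}_i \neq 0 \}$, a direct computation gives
\begin{equation*}
\sum_{\boldsymbol \xi \in O} m_j(\boldsymbol \xi) = \prod_{i \in S} \bigl( 1 + (-1)^{a_i} \bigr) (\xi^{*}_i)^{a_i},
\end{equation*}
with the understanding that the sum vanishes whenever some $a_i > 0$ for $i \notin S$ (because then $\xi_i = 0$ identically on $O$). In either case the sum is zero whenever any exponent $a_i$ is odd. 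Pruning these identically zero rows produces a reduced matrix $A_{-C}$ whose rows correspond only to monomials of degree at most $K-1$ with all exponents even. Writing $a_i = 2 b_i$, such monomials are in bijection with tuples $(b_1, \dots, b_d) \in \mathbb{N}^d$ satisfying $b_1 + \cdots + b_d \leq \lfloor (K-1)/2 \rfloor \leq \lfloor K/2 \rfloor$, so there are at most $\binom{\lfloor K/2 \rfloor + d}{d}$ rows. Under the stated hypothesis, $A_{-C}$ therefore has strictly more columns than rows and admits a nontrivial null vector $\mathbf{c}'$; setting $c_k = c'_{O(\boldsymbol \xi_k)}$ for every node produces the required symmetric null vector of $G_{-C}$.

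The main obstacle will be the bookkeeping for \emph{boundary orbits}, i.e.\ orbits whose representative lies on one or more coordinate hyperplanes and which are therefore stabilized by a nontrivial subgroup of $\Gamma$ and have size smaller than $2^d$. Both the identification $\#(\text{orbits}) = N_{\bar{Q}}$ and the vanishing argument for odd-exponent rows must be verified in this degenerate case (the case $i \notin S$, $a_i > 0$ is exactly where one uses that $m_j$ vanishes identically on $O$). The unification of the row count across the two parities of $K$ into the single bound $\binom{\lfloor K/2 \rfloor + d}{d}$ is the other small point that needs to be spelled out; otherwise the argument is a direct $d$-dimensional lift of the proof of Lemma~\ref{lmm:symquad}.
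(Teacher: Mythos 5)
Your proof is correct and follows essentially the same route as the paper's: the orbit-summed matrix you call $A_{-C}$ is, up to the harmless orbit-size factors $2^{\|\boldsymbol \xi_k\|_0}$, exactly the matrix $G'$ built in the appendix (orbit representatives in $\bar{Q}$, even monomials only), and your vanishing argument for odd-exponent rows plus the row/column count and symmetric lift reproduce the paper's Case~1/Case~2 verification. The boundary-orbit bookkeeping you flag is handled correctly by your convention for $i \notin S$, so no gap remains.
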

\begin{proof}
	See~\ref{app:proofs}.
\end{proof}

Using this lemma, a theorem can be stated about nested type-1 symmetric cubature rules with positive weights.

\begin{theorem}
	\label{thm:main1}
	Let $\{\boldsymbol \xi_1, \dots, \boldsymbol \xi_N\}$ be a type-1 symmetric cubature rule of degree $K$ with positive weights. Assume there are no cubature nodes shared between orthants (i.e., on the plane of symmetry). Then there exist $I = 2^d \binom{\lfloor\frac{K-1}{2}\rfloor + d}{d}$ indices $i_1, i_2, \dots, i_I$ such that $\{\boldsymbol \xi_{i_1}, \boldsymbol \xi_{i_2}, \dots, \boldsymbol \xi_{i_I}\}$ forms a type-1 symmetric cubature rule of degree $K-1$ with positive weights.
\end{theorem}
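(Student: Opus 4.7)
The plan is to iterate Lemma~\ref{lmm:main1}, mirroring the structure of the one-dimensional proof following Lemma~\ref{lmm:symquad}. At each step, the lemma supplies a symmetric null vector $\mathbf{c}$ of the current $G_{-C}$, and a Carath\'eodory-style reduction along $\mathbf{c}$ (or $-\mathbf{c}$, with the sign fixed as in Algorithm~\ref{alg:redquadrule}) is used to eliminate nodes.

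The central observation is that one symmetric reduction step removes exactly $2^d$ nodes at once. Since $\mathbf{w}$ is type-1 symmetric by hypothesis and $\mathbf{c}$ is by construction, the ratios $w_k/c_k$ are constant on each reflection orbit. Hence the Carath\'eodory minimum $\alpha = \min\{w_k/c_k : c_k > 0\}$ is attained simultaneously at every node of some orbit, and because no node lies on a plane of symmetry, every orbit consists of $2^d$ distinct nodes. The resulting rule is still type-1 symmetric, still has positive weights, and still integrates $\mathbb{P}(K-1,d)$ exactly. This gives the basic reduction block; picking $i_1, \dots, i_{2^d}$ as the surviving representatives of that orbit (equivalently, dropping $k_0$ and its $2^d - 1$ images) is the type-1 analog of choosing $i$ and $j = N-i+1$ in the one-dimensional theorem.

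Iterating this step decreases the per-orthant count $N_{\bar Q}$ by one each time. Lemma~\ref{lmm:main1}, reapplied after the first step with integration target $K-1$, supplies a symmetric null vector whenever $\binom{\lfloor(K-1)/2\rfloor + d}{d} < N_{\bar Q}$, which is the condition that the symmetry-reduced matrix in the lemma's proof has more columns than rows (only even-degree monomial constraints survive under the type-1 reflections). The process therefore terminates precisely when $N_{\bar Q} = \binom{\lfloor(K-1)/2\rfloor + d}{d}$, at which point the symmetry-reduced matrix becomes square and the iteration cannot continue. The final node count is then $I = 2^d N_{\bar Q}$, as claimed.

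The main obstacle is guaranteeing that each Carath\'eodory step removes \emph{exactly} one orbit. The lower bound "at least $2^d$" is immediate from the shared symmetry and the no-nodes-on-symmetry-planes hypothesis, as sketched above. The upper bound "no more than $2^d$" is generic; in degenerate cases where two distinct orbits happen to attain the Carath\'eodory minimum simultaneously, one can exploit the multi-dimensional symmetric null space of $G_{-C}$ (of dimension at least $N_{\bar Q} - \binom{\lfloor(K-1)/2\rfloor + d}{d}$, from the proof of Lemma~\ref{lmm:main1}) to replace $\mathbf{c}$ by a perturbed symmetric null vector that breaks the tie. A minor bookkeeping point is that Lemma~\ref{lmm:main1} is phrased in terms of a starting degree $K$, so iterative re-applications beyond the first require reading the statement with $K$ replaced by $K-1$ throughout, which is a direct substitution in its proof and does not alter the argument.
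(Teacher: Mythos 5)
Your proposal is correct and follows essentially the same route as the paper: iterate Lemma~\ref{lmm:main1} (read with integration target $K-1$), use the symmetric null vector in a Carath\'eodory step so that nodes are removed in full reflection orbits of size $2^d$ (guaranteed by the no-nodes-on-symmetry-planes assumption), and stop when the per-orthant count reaches $\binom{\lfloor(K-1)/2\rfloor + d}{d}$, giving $I = 2^d\binom{\lfloor(K-1)/2\rfloor + d}{d}$ remaining nodes. The paper's own proof is just this counting argument stated tersely; your extra care about exactly one orbit being removed per step and tie-breaking goes beyond, but does not change, the paper's argument.
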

\begin{proof}
	For a visual proof in two dimensions, see Figure~\ref{fig:lmm:main1}. Let $N_Q$ be the number of nodes in an orthant $Q$. Because no nodes are shared between orthants, it is true that
	\begin{equation}
		N_Q = N_{\bar{Q}},
	\end{equation}
	where $N_{\bar{Q}}$ is the number of nodes in $\bar{Q}$. Therefore the total number of nodes of the cubature rule equals $N_Q 2^d$. From Lemma~\ref{lmm:main1} it is known that the number of nodes that can be removed from one orthant equals
	\begin{equation}
		N_Q - \binom{\lfloor\frac{K-1}{2}\rfloor + d}{d}.
	\end{equation}
	Hence, the number of nodes \emph{remaining} in the orthant equals
	\begin{equation}
		\binom{\lfloor\frac{K-1}{2}\rfloor + d}{d}.
	\end{equation}
	So, the total number of nodes after all removal steps is $\binom{\lfloor\frac{K-1}{2}\rfloor + d}{d} 2^d$.
\end{proof}

If there are cubature rule nodes shared between orthants (which is almost always the case), then $I$ is an upper bound of the number of nodes after a removal procedure.

\subsubsection{Type-2 symmetry}
Again, we state a lemma about the existence of a null vector. And again, the proof has the same structure as the proof of Lemma~\ref{lmm:symquad}. However, the number of independent rows cannot be deduced explicitly anymore, such that the following lemma is necessary.

\begin{lemma}
	\label{lmm:bound1inc}
	Let $\mathbf{s} = (s_1, \dots, s_d) \in \mathbb{N}_+^d$ be a sequence. If
	\begin{itemize}
		\item $\|\mathbf{s}\|_1 \leq B$, where $B > 0$ and $B \in \mathbb{N}$,
		\item $\mathbf{s}$ is weakly increasing, i.e.\ $s_1 \leq s_2 \leq \dots$
	\end{itemize}
	then there exist $1 + \sum_{l=1}^B p_d(l)$ such sequences, where $p$ is the restricted partition function\footnotemark.
	\footnotetext{There are several definitions of the restricted partition number. Here, it is the number of compositions of the number $l$ with at most $d$ summands.}
\end{lemma}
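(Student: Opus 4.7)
The plan is to enumerate the admissible sequences by stratifying them according to their $\ell_1$ norm $l = \|\mathbf{s}\|_1$, which necessarily lies in $\{0, 1, \dots, B\}$. Write the total count as
\begin{equation*}
\#\{\mathbf{s} : \|\mathbf{s}\|_1 = 0\} + \sum_{l=1}^{B} \#\{\text{weakly increasing } \mathbf{s} \in \mathbb{N}_+^d : \|\mathbf{s}\|_1 = l\}.
\end{equation*}
The $l = 0$ stratum contributes exactly one sequence, the all-zeros tuple $(0, \dots, 0)$, which accounts for the leading $1$ in the claimed formula.

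For each $l \geq 1$ I would exhibit a bijection between weakly increasing length-$d$ sequences in $\mathbb{N}_+^d$ summing to $l$ and the restricted partition structure counted by $p_d(l)$. Concretely, given a weakly increasing $\mathbf{s}$ summing to $l$, reversing its entries yields the weakly decreasing tuple $(s_d, s_{d-1}, \dots, s_1)$; stripping trailing zeros then produces a standard partition of $l$ with at most $d$ positive parts. Conversely, any such partition is padded on the left with zeros up to length $d$ and then reversed to recover $\mathbf{s}$. The two maps are clearly sum-preserving and mutually inverse, so this stratum contributes exactly $p_d(l)$ sequences.

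Summing over $l = 1, \dots, B$ and adding the $l = 0$ contribution yields the claimed total $1 + \sum_{l=1}^{B} p_d(l)$. The argument is essentially combinatorial bookkeeping and presents no serious conceptual obstacle. The only point requiring care is to align the interpretation of $p_d(l)$ declared in the footnote with the bijection above, so that ``at most $d$ summands of $l$'' corresponds precisely to ``length-$d$ sequence possibly containing leading zeros''; once that identification is in place, the stratification and the bijection together give the count in a single line.
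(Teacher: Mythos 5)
Your argument is correct: stratifying by the total degree $l = \|\mathbf{s}\|_1$, counting the zero tuple separately, and identifying each weakly increasing nonzero stratum with partitions of $l$ into at most $d$ positive parts (via reversal and removal of zero entries) gives exactly $1 + \sum_{l=1}^{B} p_d(l)$. The paper states this lemma without proof (it is invoked in the proof of Lemma~\ref{lmm:main2} to count the sorted monomial exponents, including the constant monomial, which is what the leading $1$ accounts for), so there is nothing to compare against; your bijection is the standard argument the paper implicitly relies on. Your closing remark is the right caution: the footnote's wording (``compositions'') must be read as partitions into at most $d$ parts, and $\mathbb{N}_+^d$ must be read as allowing zero entries --- both readings are forced by the intended application and by the $+1$ term, and your proof makes that identification explicit.
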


\begin{lemma}
	\label{lmm:main2}
	Let $\{\boldsymbol \xi_1, \dots, \boldsymbol \xi_N\}$ be a type-2 symmetric cubature rule of degree $K$ with positive weights $\{w_1, \dots, w_N\}$. Let $Q$ be an orthant after a rotation over $\frac{1}{4} \pi$ of all axes. Let $N_{\bar{Q}}$ be the number of cubature nodes in $\bar{Q}$. Then there exists a symmetric null vector of $G_{-C}$ if
	\begin{equation}
		1 + \sum_{l=1}^K p_d(l) < N_{\bar{Q}},
	\end{equation}
	where $p_d(l)$ is the restricted partition function.
\end{lemma}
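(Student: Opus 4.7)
The plan is to follow the three-step template established for Lemmas~\ref{lmm:symquad} and~\ref{lmm:main1}: collapse the columns of $G_{-C}$ along orbits of the type-2 symmetry group to build an auxiliary matrix $G'$, show that rows of $G'$ corresponding to non-invariant monomials vanish, and conclude by counting that the reduced matrix has strictly more columns than rows and therefore a nontrivial kernel. Any null vector of the reduced matrix then lifts to a symmetric null vector of $G_{-C}$ by replicating each entry uniformly over its orbit.

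Concretely, I would first enumerate the orbits of $\{\boldsymbol{\xi}_1,\ldots,\boldsymbol{\xi}_N\}$ under the type-2 symmetry group (the group generated by the reflections $x^{(k)}=x^{(j)}$, which corresponds to the $2^d$-orthant structure after the stated $\pi/4$ rotation) and choose one representative in $\bar{Q}$ per orbit, giving $N_{\bar{Q}}$ orbits when no nodes lie on a symmetry hyperplane. The columns of $G'$ are then the column sums of $G_{-C}$ over each orbit. Row $j$ of $G'$ evaluates to $\sum_{\boldsymbol{\xi}\in\mathrm{orbit}} m_j(\boldsymbol{\xi})$, and if the monomial $m_j$ is not invariant under the symmetry, the orbit sum cancels and the row is identically zero. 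The surviving rows correspond precisely to monomials whose exponent vectors are invariant under the action, i.e.\ (up to reordering) weakly increasing sequences $\mathbf{s}\in\mathbb{N}_+^d$ with $\|\mathbf{s}\|_1 \leq K$. Lemma~\ref{lmm:bound1inc} bounds the count of such sequences by $1 + \sum_{l=1}^{K} p_d(l)$. Hence the reduced matrix has at most $1+\sum_{l=1}^K p_d(l)$ rows and exactly $N_{\bar{Q}}$ columns; the hypothesis then forces more columns than rows, guaranteeing a nontrivial null vector which we lift symmetrically.

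The main obstacle, in contrast to the type-1 case, is that the surviving rows lack the clean description ``all exponents even'' and no closed-form binomial count is available; this is precisely the reason Lemma~\ref{lmm:bound1inc} is needed, and one must carefully justify that every non-invariant monomial really does vanish under the orbit sum (rather than just a strict subset). A secondary, purely book-keeping, subtlety is the treatment of cubature nodes that lie on one of the symmetry hyperplanes: their orbits are proper sub-orbits, which erodes the effective column count $N_{\bar{Q}}$ and weakens the bound, in the same spirit as the ``shared nodes'' caveat following Theorem~\ref{thm:main1}. Assuming nodes are generically off the walls of the fundamental domain, this complication does not affect the structure of the argument, and the remainder is a mechanical execution of the Carath\'eodory-style reduction already used in Lemma~\ref{lmm:symquad}.
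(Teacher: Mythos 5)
Your overall architecture is exactly the paper's: collapse the columns of $G_{-C}$ over permutation orbits, with one representative per orbit taken in $\bar{Q}$; bound the number of relevant rows by Lemma~\ref{lmm:bound1inc}; conclude that the collapsed matrix has more columns than rows and hence a nontrivial kernel; and lift the resulting vector to a symmetric null vector by making it constant on orbits. However, the central claim you use to prune rows is false as stated: for type-2 symmetry the orbit sum of a non-invariant monomial does \emph{not} cancel. Take $d=2$, the node $(1,2)$ whose orbit under the reflection $x^{(1)}=x^{(2)}$ is $\{(1,2),(2,1)\}$, and the non-invariant monomial $m(\boldsymbol \xi)=\xi^{(1)}$: the orbit sum is $1+2=3\neq 0$. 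Cancellation to zero is the type-1 mechanism (odd powers against sign flips, as in the proof of Lemma~\ref{lmm:main1}); it has no analogue under coordinate permutations, so the step ``the row is identically zero'' would fail if you tried to execute it.

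The correct mechanism, and the one the paper's proof relies on, is \emph{duplication} rather than cancellation: summing a monomial $\boldsymbol \xi^\alpha$ over the full permutation orbit of a node depends only on $\alpha$ up to reordering, i.e.\ $\sum_\sigma (\sigma \boldsymbol \xi)^\alpha = \sum_\sigma (\sigma \boldsymbol \xi)^{\alpha'}$ whenever $\alpha'$ is a permutation of $\alpha$. Consequently, in the collapsed matrix the rows indexed by exponent vectors in the same permutation class are \emph{identical} (not zero), so one keeps only the sorted (weakly increasing) representatives; Lemma~\ref{lmm:bound1inc} counts these and gives the $1+\sum_{l=1}^K p_d(l)$ row bound, matching your count but for a different reason. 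The same identity is also what you need in the lifting step: a row of $G_{-C}$ with an unsorted exponent is not annihilated ``trivially'' but because its orbit sums coincide with those of the sorted representative, which $\mathbf{c}'$ annihilates by construction. Two smaller points: strictly speaking the exponent vectors that ``survive'' are not those invariant under the action (only constant vectors are), but one representative per permutation class; and the lemma itself does not require your genericity assumption on nodes lying on the symmetry planes, since every orbit meets $\bar{Q}$ in exactly one representative and thus contributes one column regardless --- the shared-node caveat only affects the node counts in Theorem~\ref{thm:main2}. With the cancellation claim replaced by the duplication argument, your proof goes through and coincides with the paper's.
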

\begin{proof}
	See \ref{app:proofs}.
\end{proof}

A similar theorem can be developed about the nested cubature rule in this case.

\begin{theorem}
	\label{thm:main2}
	Let $\{\boldsymbol \xi_1, \dots, \boldsymbol \xi_N\}$ be a type-2 symmetric cubature rule of degree $K$ with positive weights. Assume there are no cubature nodes shared between orthants after a rotation over $\frac{1}{4} \pi$ (i.e., on the plane of symmetry). Then there exist $I = 2^d \left(1 + \sum_{l=1}^{K-1} p_d(l)\right)$ indices $i_1, i_2, \dots, i_I$ such that $\{\boldsymbol \xi_{i_1}, \boldsymbol \xi_{i_2}, \dots, \boldsymbol \xi_{i_I}\}$ forms a type-2 symmetric cubature rule of degree $K-1$ with positive weights.
\end{theorem}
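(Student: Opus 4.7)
The plan is to follow the proof of Theorem~\ref{thm:main1} almost verbatim, substituting Lemma~\ref{lmm:main2} for Lemma~\ref{lmm:main1} and replacing the orthant partition used there with the rotated-orthant partition underlying type-2 symmetry. I fix one rotated orthant $Q$, let $N_Q$ and $N_{\bar{Q}}$ denote the number of nodes in its interior and in its closure respectively, and invoke the no-sharing hypothesis to conclude that $N_Q = N_{\bar{Q}}$. Consequently the rule consists of exactly $2^d N_Q$ nodes, partitioned into $2^d$ equal-sized orbits under the type-2 reflection group.

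Next I iterate the multi-dimensional symmetric reduction step from Section~\ref{subsec:redstep}. As long as $N_{\bar{Q}} > 1 + \sum_{l=1}^{K-1} p_d(l)$, Lemma~\ref{lmm:main2} (applied with its degree parameter taken to be $K-1$, so as to target a reduced rule of degree $K-1$) produces a type-2 symmetric null vector $\mathbf{c}$ of $G_{-C}$, where $C = \dim \mathbb{P}(K,d) - \dim \mathbb{P}(K-1,d)$. A single Carath\'eodory reduction with either $\mathbf{c}$ or $-\mathbf{c}$ zeroes out at least one weight, and by the symmetry of $\mathbf{c}$ the full orbit of that weight vanishes simultaneously while all remaining weights stay non-negative. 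Because the no-sharing hypothesis forbids nodes on planes of symmetry, each step deletes exactly $2^d$ nodes (one per orbit), so $N_{\bar{Q}}$ decreases by one.

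The iteration terminates precisely when $N_{\bar{Q}} = 1 + \sum_{l=1}^{K-1} p_d(l)$. At that point the rule still has positive weights and type-2 symmetry, and since the rows dropped to form $G_{-C}$ correspond exclusively to degree-$K$ monomials, every polynomial of degree at most $K-1$ is still integrated exactly. The total node count is $2^d\bigl(1 + \sum_{l=1}^{K-1} p_d(l)\bigr) = I$, yielding the desired indices $i_1, \dots, i_I$.

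The main obstacle I anticipate is checking that the hypotheses of Lemma~\ref{lmm:main2} and the no-sharing assumption persist through the iteration. Both survive essentially for free: a symmetric reduction step only deletes orbits and never moves surviving nodes, so no node migrates onto a symmetry plane; and the inequality $N_{\bar{Q}} > 1 + \sum_{l=1}^{K-1} p_d(l)$ is itself the termination criterion, so Lemma~\ref{lmm:main2} remains applicable at every intermediate step until the target count is reached.
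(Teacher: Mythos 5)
Your proposal is correct and follows essentially the same route as the paper, whose proof is literally ``combine the proof of Theorem~\ref{thm:main1} with Lemma~\ref{lmm:main2}'': fix a rotated orthant, use the no-sharing hypothesis to get $N_Q = N_{\bar Q}$, and iterate the symmetric Carath\'eodory reduction (with the lemma's bound taken at degree $K-1$) until $1 + \sum_{l=1}^{K-1} p_d(l)$ nodes per orthant remain. Your additional remarks on termination and on the persistence of the hypotheses merely make explicit what the paper leaves implicit.
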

\begin{proof}
	Combine the proof of Theorem~\ref{thm:main1} with Lemma~\ref{lmm:main2}. For a visual proof in two dimensions, see Figure~\ref{fig:lmm:main2}.
\end{proof}

If there are nodes shared between orthants, the theorem provides an upper bound of the number of nodes. The two lemmas can be combined into the following corollary. The resulting theorem has the same structure as the two theorems above and is therefore omitted.

\begin{corollary}
	\label{cor:main12}
	Let $\{\boldsymbol \xi_1, \dots, \boldsymbol \xi_N\}$ be a type-1 and type-2 symmetric cubature rule of degree $K$ with positive weights $\{w_1, \dots, w_N\}$. Let $Q_1$ be an orthant and let $Q_2$ be an orthant after a rotation over $\frac{1}{4}\pi$ of all axes. Let $N_{\bar{Q_1} \cap \bar{Q_2}} \eqqcolon N_{\bar{Q}}$ be the number of nodes in both $\bar{Q_1}$ and $\bar{Q_2}$. Then there exists a symmetric null vector of $G_{-C}$ if
	\begin{equation}
		1 + \sum_{l=1}^{\lfloor K / 2 \rfloor} p_d(l) < N_{\bar{Q}},
	\end{equation}
	where $p_d(l)$ is the restricted partition function.
\end{corollary}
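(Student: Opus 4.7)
The plan is to combine the constructions of Lemmas~\ref{lmm:main1} and \ref{lmm:main2}. Since the cubature rule carries both symmetries, it is invariant under the group $H$ generated by the $d$ axis reflections and all coordinate swaps --- the hyperoctahedral group, of order $2^d d!$. I would partition the cubature nodes into $H$-orbits, choose a representative $\xi_k$ of each orbit in $\bar{Q_1}\cap\bar{Q_2}$, and form a reduced matrix $G'$ whose $(j,k)$ entry is $\sum_{\sigma \in H} m_j(\sigma \xi_k)$. A null vector $\mathbf{c}'$ of $G'$ then lifts to an $H$-symmetric null vector of $G_{-C}$ by copying the coefficient $c'_k$ onto every column of $G_{-C}$ that lies in the $k$th orbit, exactly as in the preceding lemmas.

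Next I would determine which rows of $G'$ can be ignored. Writing $m_j(x) = x^a$ with $|a| \le K$, the type-1 argument from Lemma~\ref{lmm:main1} shows that summing $m_j$ over the sign-change subgroup of $H$ yields zero unless every component of $a$ is even, so rows indexed by exponent tuples with any odd entry vanish identically. The type-2 argument from Lemma~\ref{lmm:main2} then shows that two surviving rows, indexed by exponent tuples $a$ and $a'$ related by a coordinate permutation, are proportional column-by-column; only one representative per $S_d$-orbit of even exponent tuples contributes to the rank.

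Setting $s_i = a_i/2$, the surviving rows are parametrised by weakly increasing sequences $\mathbf{s}=(s_1,\dots,s_d)\in\mathbb{N}_+^d$ with $2\|\mathbf{s}\|_1 \le K$, i.e.\ $\|\mathbf{s}\|_1 \le \lfloor K/2 \rfloor$. Lemma~\ref{lmm:bound1inc} with $B=\lfloor K/2 \rfloor$ then gives exactly $1 + \sum_{l=1}^{\lfloor K/2 \rfloor} p_d(l)$ such sequences. Hence $G'$ has at most this many independent rows and $N_{\bar{Q}}$ columns, and the hypothesis $1 + \sum_{l=1}^{\lfloor K/2 \rfloor} p_d(l) < N_{\bar{Q}}$ forces $G'$ to have a nontrivial null space, producing the required symmetric null vector of $G_{-C}$.

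The main obstacle, as in the two preceding theorems, is the bookkeeping for nodes lying on a symmetry plane: for such nodes the $H$-orbit is strictly smaller than $2^d d!$, the corresponding column of $G'$ picks up a non-generic multiplicity, and $N_{\bar{Q}}$ becomes only an upper bound on the number of independent columns. Once the generic case is handled as above, the degenerate case reduces to the same \emph{upper bound} remark made after Theorems~\ref{thm:main1} and \ref{thm:main2}, so no new argument is required beyond the careful verification that the parity-and-permutation reduction of rows applies to the monomial basis chosen for $G$.
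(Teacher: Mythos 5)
Your proof is correct and is essentially the paper's intended argument: the paper's own proof of this corollary is simply ``combine Lemmas~\ref{lmm:main1} and~\ref{lmm:main2}'', and your orbit-summed matrix $G'$ --- rows killed by the type-1 parity argument unless all exponents are even, then collapsed by the type-2 permutation argument to sorted tuples counted via Lemma~\ref{lmm:bound1inc} with $B=\lfloor K/2\rfloor$ --- is exactly that combination (it is also the matrix used in Algorithm~\ref{alg:symredcubrule}). Your closing concern about nodes on symmetry planes is harmless but unnecessary for this statement: the null vector exists as soon as the number of columns (one per orbit representative in $\bar{Q_1}\cap\bar{Q_2}$, i.e.\ $N_{\bar{Q}}$) exceeds the row rank, so no column independence is needed; the plane-of-symmetry bookkeeping only affects the node-count upper bounds in the subsequent theorems.
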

\begin{proof}
	Combine Lemma~\ref{lmm:main1}~and~\ref{lmm:main2}.
\end{proof}

\subsection{Reduced cubature rule}
\label{subsec:redcubrule}
With the construction above, three different reduced cubature rules can be considered:
\begin{enumerate}
	\item The \emph{reduced cubature rule}, which is a set of nested cubature rules with positive weights (but no symmetry). This set can be generated by repeatedly applying the reduction step and is the multi-dimensional extension of the \emph{reduced quadrature rule}. See Figure~\ref{fig:nestedex1} for an example.
	\item The \emph{symmetric reduced cubature rule}, which is a set of nested cubature rules with positive weights, incorporating symmetry. This set can be generated by applying reduction steps with the theories deduced above. This is the multi-dimensional extension of the \emph{symmetric reduced quadrature rule}. See Figure~\ref{fig:nestedex2} for an example.
	\item The \emph{negative symmetric reduced cubature rule}, which is a set of nested cubature rules incorporating symmetry, but having possibly multiple negative weights. Although the weights are not absolutely bounded, this cubature rule yields a very small number of nodes. The set can be generated using the theories above trying to remove as many nodes as possible in each step. In the one-dimensional case, this rule was not studied because the number of nodes was not relevant. See Figure~\ref{fig:nestedex3} for an example. In the figure it is clearly visible that nodes on the boundaries of the orthants of the theorems above are maintained.
\end{enumerate}
The cubature rules are constructed using the multi-dimensional Vandermonde-matrix and incorporating Lemma~\ref{lmm:main1} and \ref{lmm:main2}. Pseudo-code for this is provided in \ref{app:algs}. The algorithm stated there considers a fixed fraction of a tensor grid, whose number of nodes increases rapidly. Therefore determining a null vector of the corresponding generalized Vandermonde-matrix becomes rapidly computationally expensive. The condition number of the matrix depends on the symmetries of the original cubature rule, i.e., the more symmetries there are, the better the condition number is. Creating a general efficient implementation is ongoing research.

If the number of dimensions is not too large ($d \lesssim 5$), then the cubature rule with positive weights has approximately the same number of nodes as the Smolyak sparse grid (see Figure~\ref{fig:nestedexsmol} for an example). For higher dimensions, all cubature rules suffer from the curse of dimensionality. The cubature rule with positive weights has the largest growth in number of nodes. The cubature rule with (some) negative weights has the smallest growth compared with both the Smolyak cubature rule and the cubature rule with (only) positive weights.

Both the cubature rules with positive weights and negative weights are based on the removal of groups of nodes. If all groups would be of equal size, both approaches yield an equal number of nodes. However, nodes on the plane of symmetry belong to smaller groups. The cubature rule with positive weights removes nodes such that the weights remain positive and does not take this into account. The growth of the nodes with respect to the dimension is therefore large. In Figure~\ref{fig:nestedex2} it is clearly visible that nodes on the plane of symmetry are being removed. As opposed to this, the cubature rule with negative weights does not remove these nodes (see Figure~\ref{fig:nestedex3}). Choosing a quadrature rule of odd length results therefore in a smaller cubature rule as there are more groups of nodes with equal weights.

Just as for the Smolyak cubature rule, the number of nodes cannot be deduced analytically but must be tabulated (see Table~\ref{tbl:results}). For higher dimensional cases, the results for the positive reduced cubature rule are omitted due to computational constraints. In the table the differences in growth are clearly visible. In the 5-dimensional case it can be observed that choosing a quadrature rule of odd length yields less nodes. Selecting a larger initial rule can therefore result in a smaller reduced rule. For the same reason there are less results for the 15-, 20-, and 25-dimensional cases: the cases where the original quadrature rule has even length are computationally unfeasible.

\begin{table}
	\centering
	\caption{Number of nodes of several cubature rules for several dimensions ($d$) and several degrees ($K$). $N_\textrm{Positive}$ denotes the number of nodes of the symmetric reduced cubature rule with positive weights, $N_\textrm{Negative}$ denotes the number of nodes of the negative symmetric reduced cubature rule, and $N_\textrm{Smolyak}$ denotes the number of nodes of the smallest Smolyak sparse grid of at least degree $K$.}
	\label{tbl:results}
	\begin{tabular}{r r | r r r r}
		$d$ & $K$ & $\dim \mathbb{P}(K, d)$ & $N_\textrm{Smolyak}$ & $N_\textrm{Positive}$ & $N_\textrm{Negative}$ \\
		\hline
		\hline
		5 & 5 & 252 & 61 & 113 & 43 \\
		5 & 7 & 792 & 241 & 544 & 384 \\
		5 & 9 & 2\,002 & 805 & 1\,313 & 325 \\
		5 & 11 & 4\,368 & 2\,473 & 4\,096 & 2\,016 \\
		5 & 13 & 8\,568 & 7\,245 & 6\,005 & 1\,607 \\
		\hline
		7 & 5 & 792 & 113 & 689 & 99 \\
		7 & 7 & 3\,432 & 589 & 1\,797 & 325 \\
		7 & 9 & 11\,440 & 2\,471 & 19\,717 & 901 \\
		7 & 11 & 31\,824 & 9\,101 & 28\,479 & 2\,863 \\
		7 & 13 & 77\,520 & 30\,907 & 158\,709 & 28\,479 \\
		\hline
		10 & 5 & 3\,003 & 221 & 13\,461 & 201 \\
		10 & 7 & 19\,448 & 1\,581 & 20\,533 & 1\,361 \\
		10 & 9 & 92\,378 & 8\,810 & 1\,368\,449 & 3\,705 \\
		10 & 11 & 352\,716 & 41\,445 & 8\,284\,617 & 12\,489 \\
		10 & 13 & 1\,144\,066 & 172\,055 & 26\,598\,325 & 38\,353 \\
		\hline
		15 & 5 & 15\,504 & 481 & & 451 \\
		15 & 9 & 1\,307\,504 & 40\,001 & & 30\,861 \\
		15 & 13 & 37\,442\,160 & 1\,472\,697 & & 362\,063 \\
		\hline
		20 & 5 & 53\,130 & 841 & & 801 \\
		20 & 9 & 10\,015\,005 & 120\,401 & & 98\,881 \\
		\hline
		25 & 5 & 142\,506 & 1\,301 & & 1\,251 \\
		25 & 9 & 52\,451\,256 & 286\,001 & & 244\,101
	\end{tabular}
\end{table}

\begin{figure}
	\centering
	\begin{minipage}{.45\textwidth}
		\includegraphics[width=\textwidth]{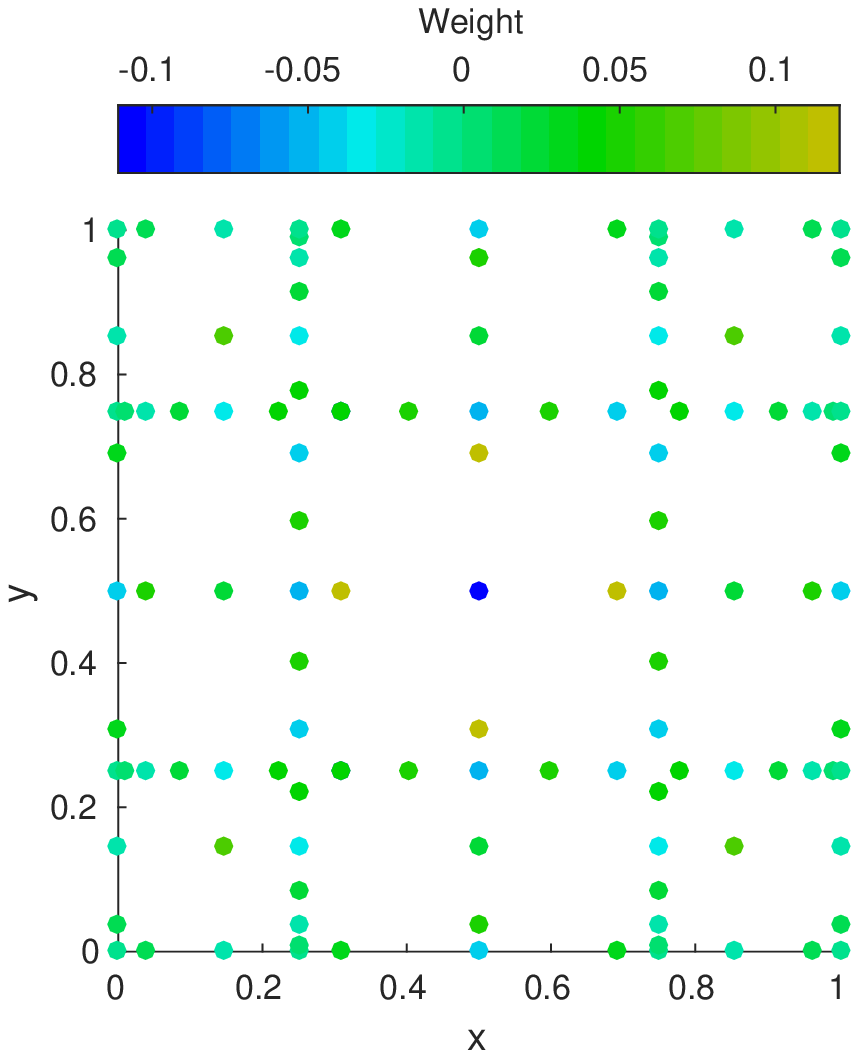}
		\subcaption{Smolyak sparse grid (117~nodes)}
		\label{fig:nestedexsmol}
	\end{minipage}
	\begin{minipage}{.45\textwidth}
		\includegraphics[width=\textwidth]{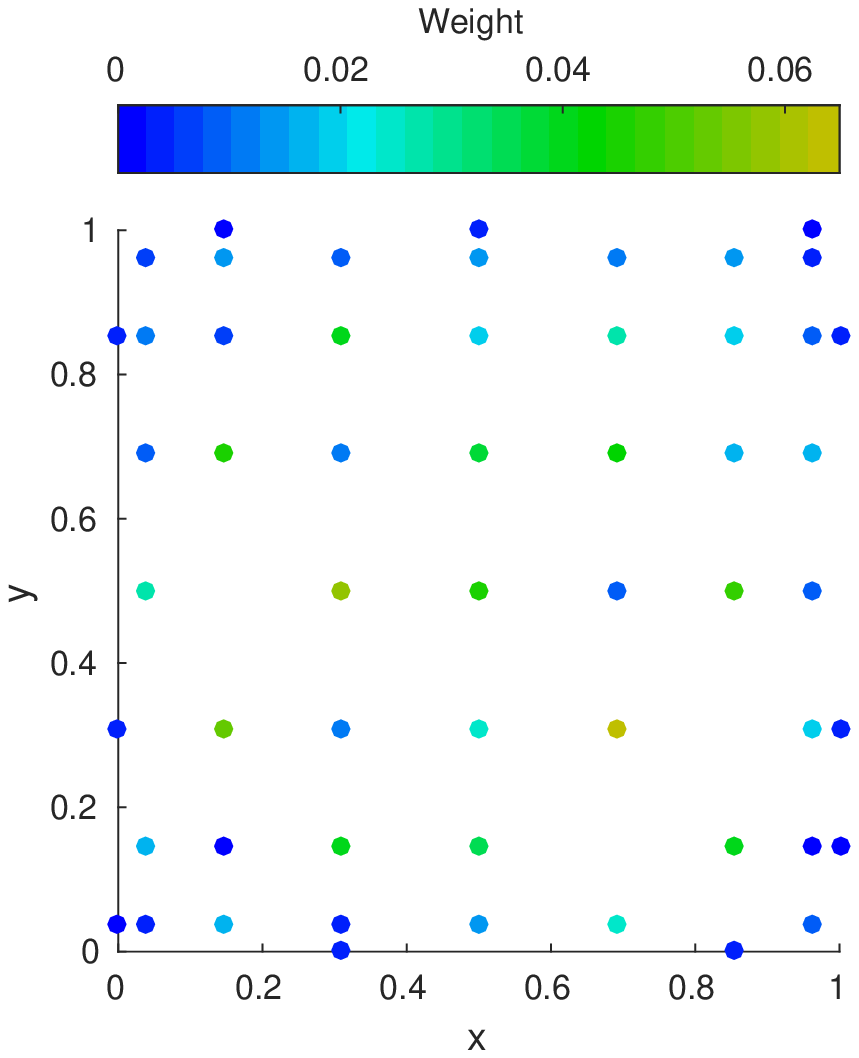}
		\subcaption{Red.\ cubature rule (55~nodes)}
		\label{fig:nestedex1}
	\end{minipage}
	\begin{minipage}{.45\textwidth}
		\includegraphics[width=\textwidth]{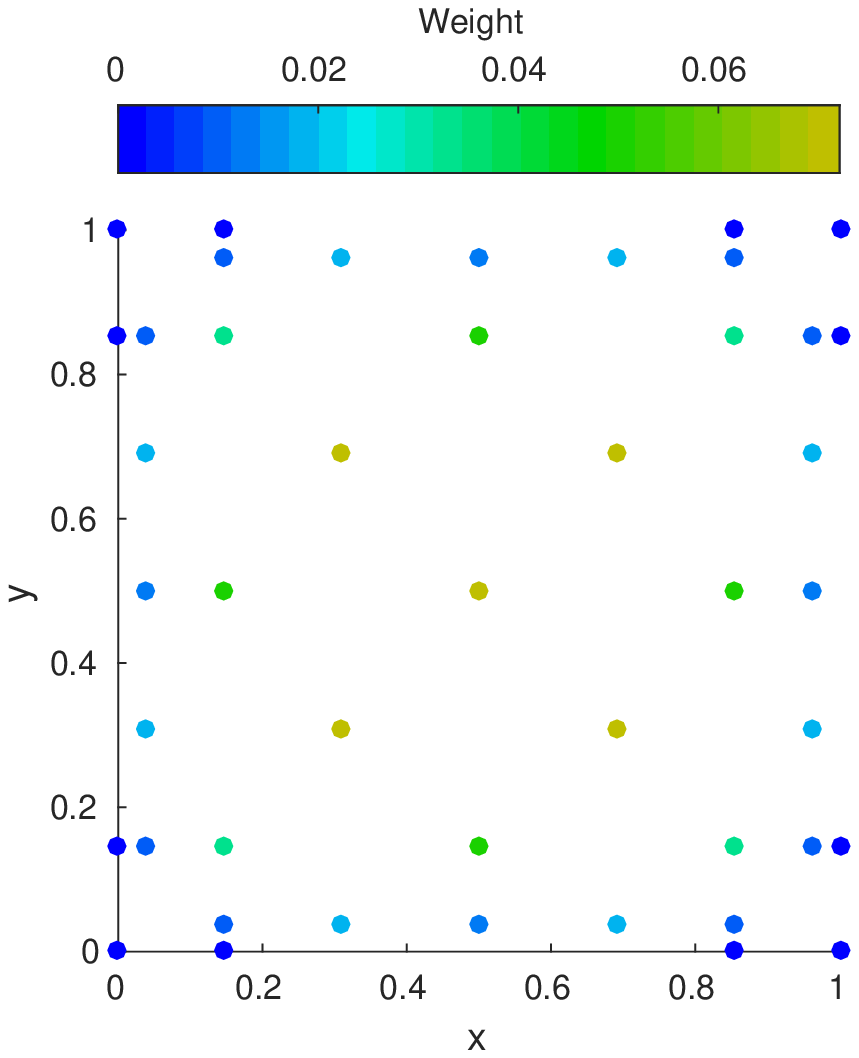}
		\subcaption{Symmetric red.\ cubature rule (45~nodes)}
		\label{fig:nestedex2}
	\end{minipage}
	\begin{minipage}{.45\textwidth}
		\includegraphics[width=\textwidth]{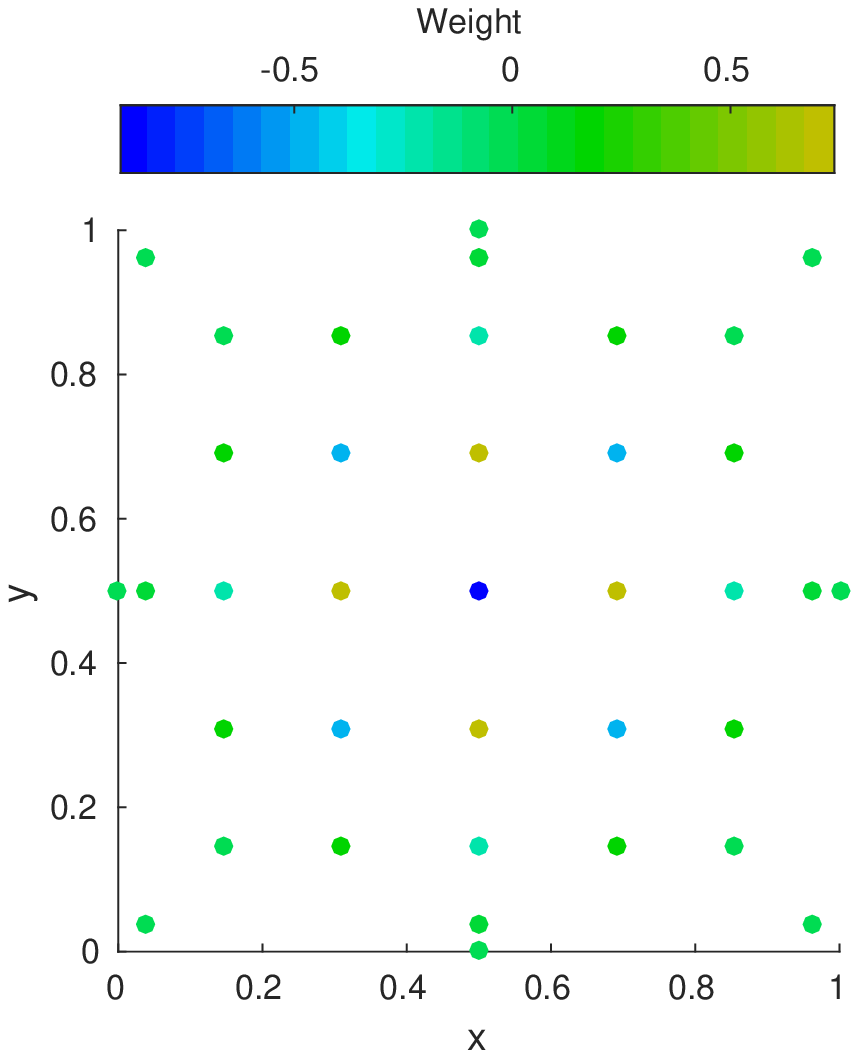}
		\subcaption{Neg.\ symmetric red.\ cubature rule (37~nodes)}
		\label{fig:nestedex3}
	\end{minipage}
	\caption{The discussed multi-dimensional cubature rules. All rules are of degree 9, generated using Clenshaw--Curtis quadrature rules. Initial tensor grids of the reduced rules are $9 \times 9$. Negative (neg.) and reduced (red.) is abbreviated.}
\end{figure}

\subsection{Condition number}
\label{subsec:condnum}
For the Smolyak cubature rule the growth of the condition number with respect to the number of nodes is bounded (recall \eqref{eq:smolnorm}). The reduced quadrature rule has positive weights, so the condition number equals $1$ in this case. For the negative reduced cubature rule no such bounds exist, as far as the authors know. The condition number can be deduced numerically to assess its growth (see Figure~\ref{fig:cond}). The maximum degree (which is 15 here) is chosen such that (numerically) the sum of the weights equals 1 with a maximum error of $10^{-12}$. We are primarily interested in $\kappa$, not in the numerical accuracy of the procedure.

The growth of the condition number of the Smolyak rules is equal, which is evident. The Smolyak rule generated with a reduced quadrature rule of the prior criterion has larger condition number than the Smolyak rule generated using Clenshaw--Curtis rules. If this is unwanted, we suggest a \emph{weight criterion} where the reduced rule is selected with the smallest mutual difference, i.e., with the smallest $\max_k w_k - \min_k w_k$.
The condition number of the Smolyak rule generated with this reduced quadrature rule is significantly smaller and close to the condition number of a Clenshaw--Curtis. However, this criterion does not use the underlying distribution, so we do not study it further.

The condition number of the reduced negative cubature rule is smaller than that of the Smolyak rule. However, for larger degrees severe numerical errors occur in the algorithm to generate these rules, which is not the case for the Smolyak rule.

\begin{figure}
	\centering
	\includegraphics[width=.5\textwidth]{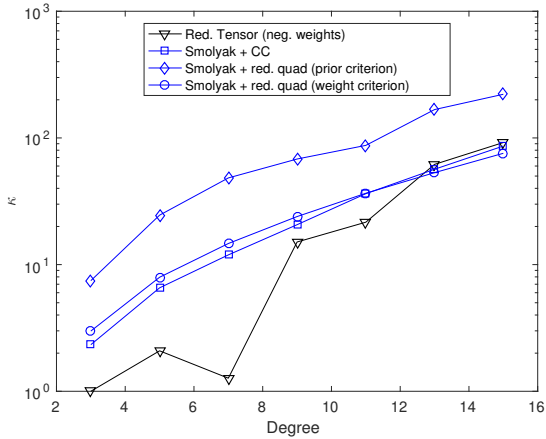}
	\caption{The condition number $\kappa$ of the four cubature rules under consideration that have negative weights. All rules are 5-dimensional. Here, ``red.\ quad'' and ``CC'' stand for ``reduced quadrature rule'' and ``Clenshaw--Curtis'' respectively.}
	\label{fig:cond}
\end{figure}

\section{Numerical results}
\label{sec:numerics}
In this section the proposed cubature rules are applied to multiple problems and compared with tensor product and Smolyak cubature rule.

This section is built as follows: in the first sub-section the cubature rules will be used to integrate the Genz test functions. These functions are designed for testing cubature rules. The second and third sub-section contain applications of the cubature rules to two UQ cases. In the second sub-section the standard lid-driven cavity flow problem with uncertain boundary conditions and material properties will be studied, using a Lattice Boltzmann method to compute the flow. In the final sub-section the main advantage of allowing negative weights is shown, i.e.\ high accuracy for a moderately high-dimensional problem. An aircraft aerodynamics test case is considered with seven uncertain parameters, using the Euler equations of gas dynamics and a finite-volume discretization of these to compute the corresponding aircraft aerodynamics. Because the conventional methods require a large number of simulations, only the results of the reduced cubature rule with negative weights are discussed in this case.

\subsection{Genz test functions}
\subsubsection{Uniform distribution}
\label{subsubsec:unifdist}
To test the quality of cubature rules, several functions have been developed by Genz \cite{Genz1984}. Each function has a different specific property or attribute, of which the effect can be enlarged by a parameter $\mathbf{a}$. A shape parameter $\mathbf{u}$ can be used to transform the function without changing the property (see Table~\ref{tbl:genz} for all functions and their relevant attributes). For all functions the exact value of the integral can be determined \cite{Patterson1987}.

\begin{table}
	\centering
	\caption{The test functions from Genz \citep{Genz1984}. All functions are from a certain integrand family and depend on the parameters $\mathbf{a} = \trans{(a_1, \dots, a_N)}$ and $\mathbf{u} = \trans{(u_1, \dots, u_N)}$. The parameter $\mathbf{u}$ is a parameter that does not affect the difficulty of the integral. The parameter $\mathbf{a}$ determines the degree to which the family attribute is present.}
	\label{tbl:genz}
	\begin{tabular}{l l}
		\textbf{Integrand Family} & \textbf{Attribute} \\
		\hline
		\hline
		$f_1(\mathbf{x}) = \cos\left(2\pi u_1 + \sum_{i=1}^n a_i x_i\right)$ & Oscillatory \\
		$f_2(\mathbf{x}) = \prod_{i=1}^n \left(a_i^{-2} + (x_i - u_i)^2\right)^{-1}$ & Product Peak \\
		$f_3(\mathbf{x}) = \left(1 + \sum_{i=1}^n a_i x_i\right)^{-(n+1)}$ & Corner Peak \\
		$f_4(\mathbf{x}) = \exp\left(- \sum_{i=1}^n a_i^2 (x_i - u_i)^2 \right)$ & Gaussian \\
		$f_5(\mathbf{x}) = \exp\left(- \sum_{i=1}^n a_i |x_i - u_i|\right)$ & $C_0$ function \\
		$f_6(\mathbf{x}) = \begin{cases}
			0 &\text{if $x_1 > u_1$ or $x_2 > u_2$} \\
			\exp\left(\sum_{i=1}^n a_i x_i\right) &\text{otherwise}
		\end{cases}$ & Discontinuous
	\end{tabular}
\end{table}

Reducing a cubature rule only maintains the polynomial accuracy, which requires sufficient smoothness of the integrand. The first four Genz functions are in $C^\infty({[0,1]}^d)$, while the fifth is in $C^0({[0,1]}^d)$, and the sixth is only piecewise continuous. Hence \emph{a priori} we expect spectral convergence for the first four functions and poor convergence for the fifth and sixth, independent of the particular rule.

To obtain meaningful, instructive results the coefficients $\mathbf{a}$ and $\mathbf{u}$ are chosen randomly, with each component from similar uniform distributions, subject to the constraints $\|\mathbf{a}\|_2 = 2.5$ and $\|\mathbf{u}\|_2 = 1$. Moreover, each component of both $\mathbf{a}$ and $\mathbf{u}$ is positive. The integration error is determined with respect to the exact solution, and averaged over 100 runs. 

Convergence plots for all methods and all Genz functions are depicted in Figure~\ref{fig:genzllf}. The multi-dimensional reduced rules are initiated using tensor products of Gaussian quadrature rules and the reduction procedure is only applied once to keep numerical artifacts small. The Smolyak procedure is applied twice using Clenshaw--Curtis quadrature rules or reduced quadrature rules using a fine Clenshaw--Curtis rule as initial rule. The results can be divided into three classes that exhibit different behaviors: (i) $f_1$, $f_2$, $f_4$, (ii) $f_3$, and (iii) $f_5$, $f_6$.

Class (i) is formed by smooth results, which show almost spectral convergence for all methods. Both symmetric reduced rules consistently outperform the tensor product, and negative symmetric reduced rules also consistently outperform both Smolyak rules. No significant difference exists between the two Smolyak rules.

Class (ii) is an exception, most likely caused by the concentration of mass at one corner of the integration domain. Both Smolyak and reduced (negative weights) rules remove nodes at corners, and thereby poorly approximate the most important region of the integrand. The reduced quadrature rule keeps some nodes at the corner up to small levels, so therefore the Smolyak rule with reduced rules performs slightly better. The tensor and reduced (positive weights) rules do have nodes there, which makes the error much smaller.

For class (iii) the integrands lack sufficient smoothness for polynomial approximations to be stable. As expected, spectral convergence is not evident, but some limited linear convergence is visible. In both cases Smolyak rules acquit themselves well compared to all other methods.

In summary the proposed reduced rules are empirically converging at the level of Smolyak or better, given sufficient smoothness in the integrand.

\begin{figure}
	\centering
	\begin{minipage}{.45\textwidth}
		\centering
		\includegraphics[width=\textwidth]{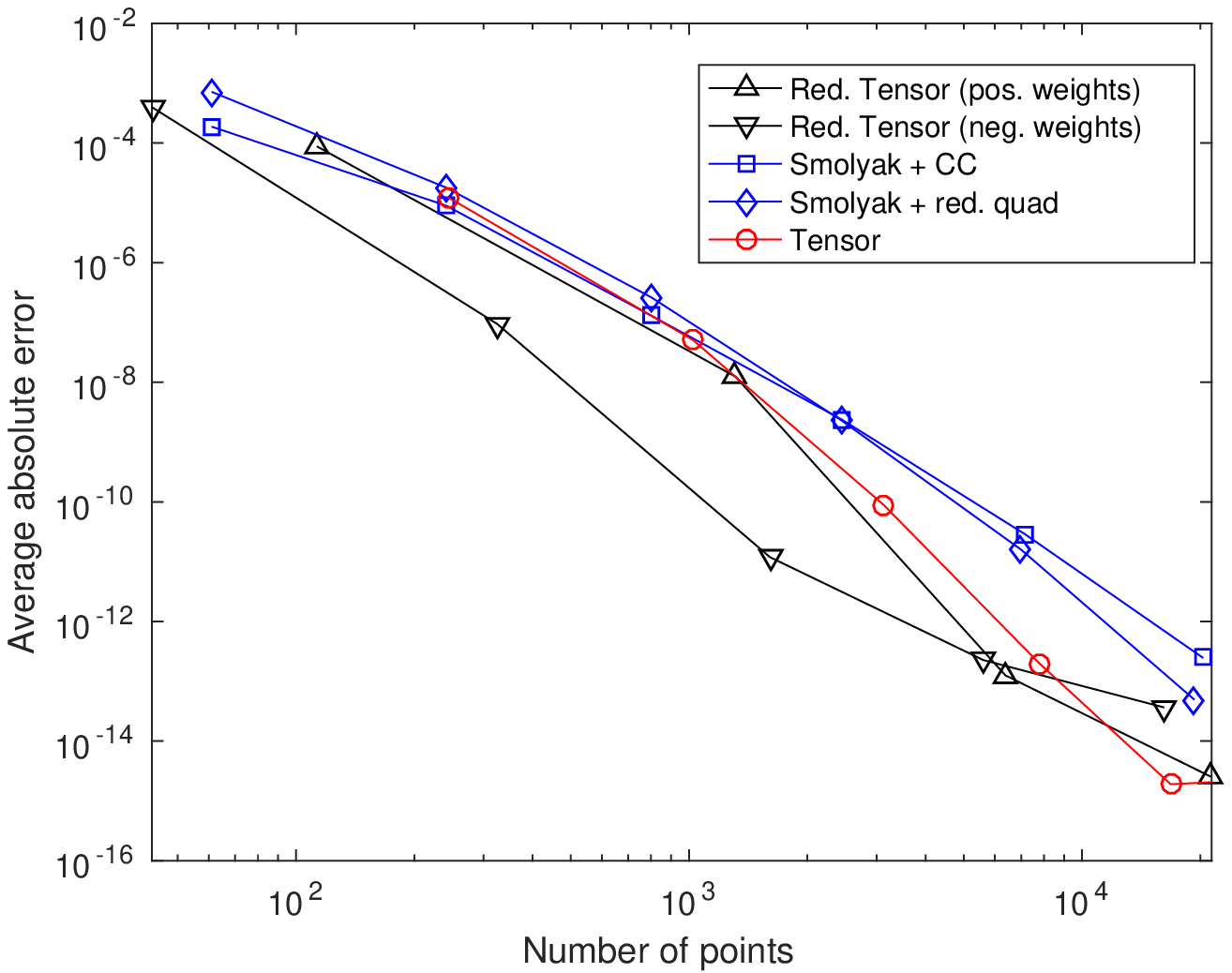}
		\subcaption{$f_1$}
	\end{minipage}
	\begin{minipage}{.45\textwidth}
		\centering
		\includegraphics[width=\textwidth]{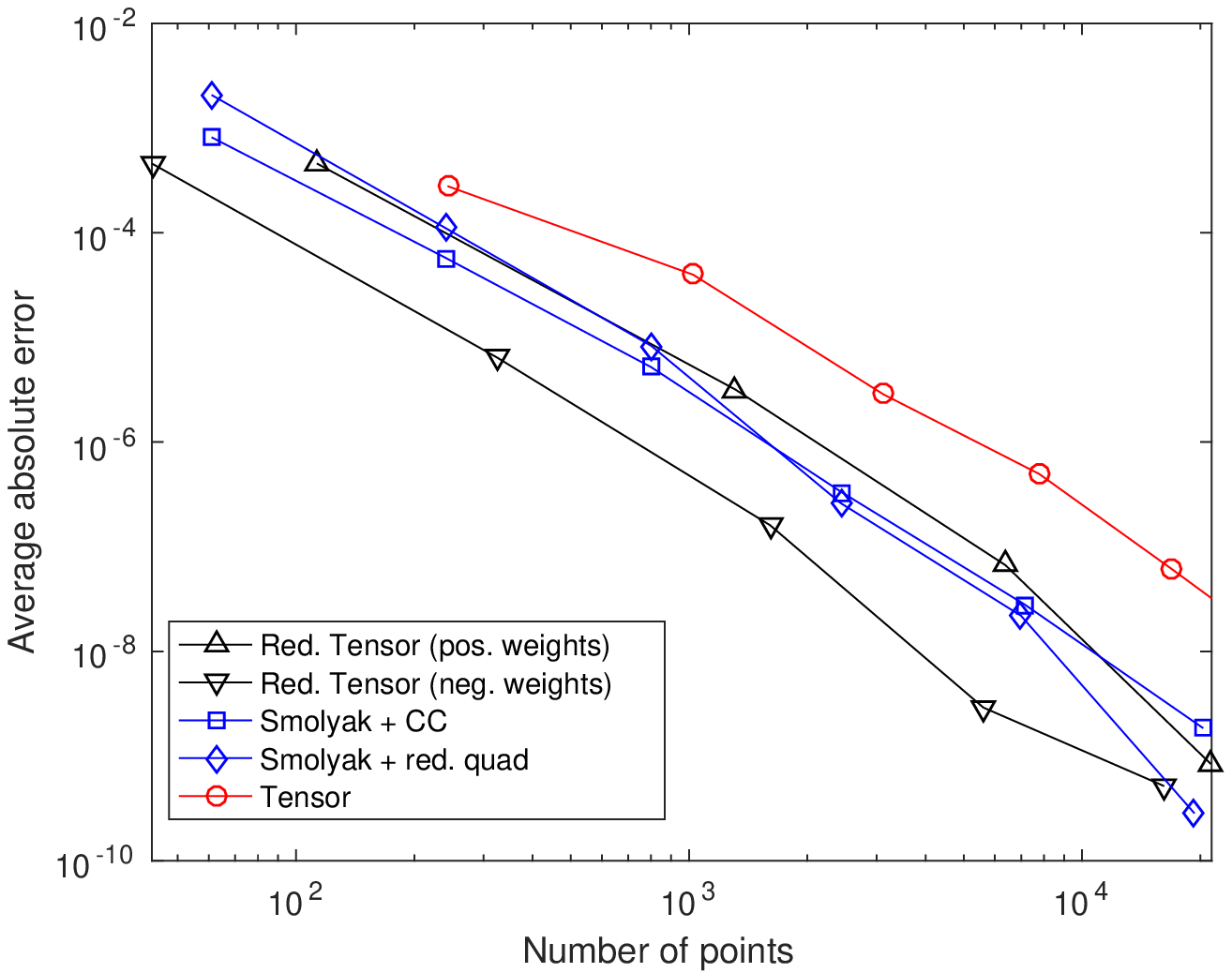}
		\subcaption{$f_2$}
	\end{minipage}
	\begin{minipage}{.45\textwidth}
		\centering
		\includegraphics[width=\textwidth]{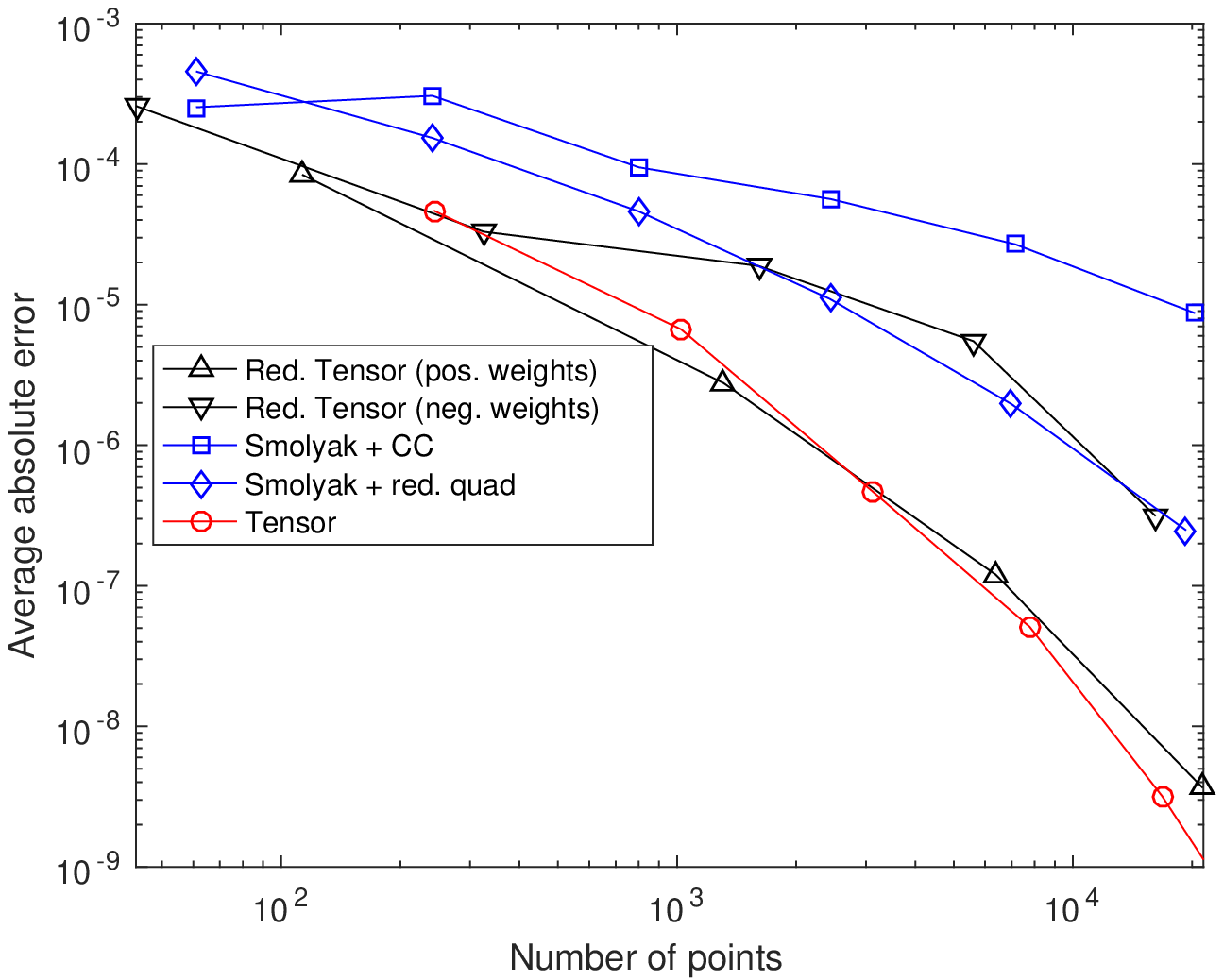}
		\subcaption{$f_3$}
	\end{minipage}
	\begin{minipage}{.45\textwidth}
		\centering
		\includegraphics[width=\textwidth]{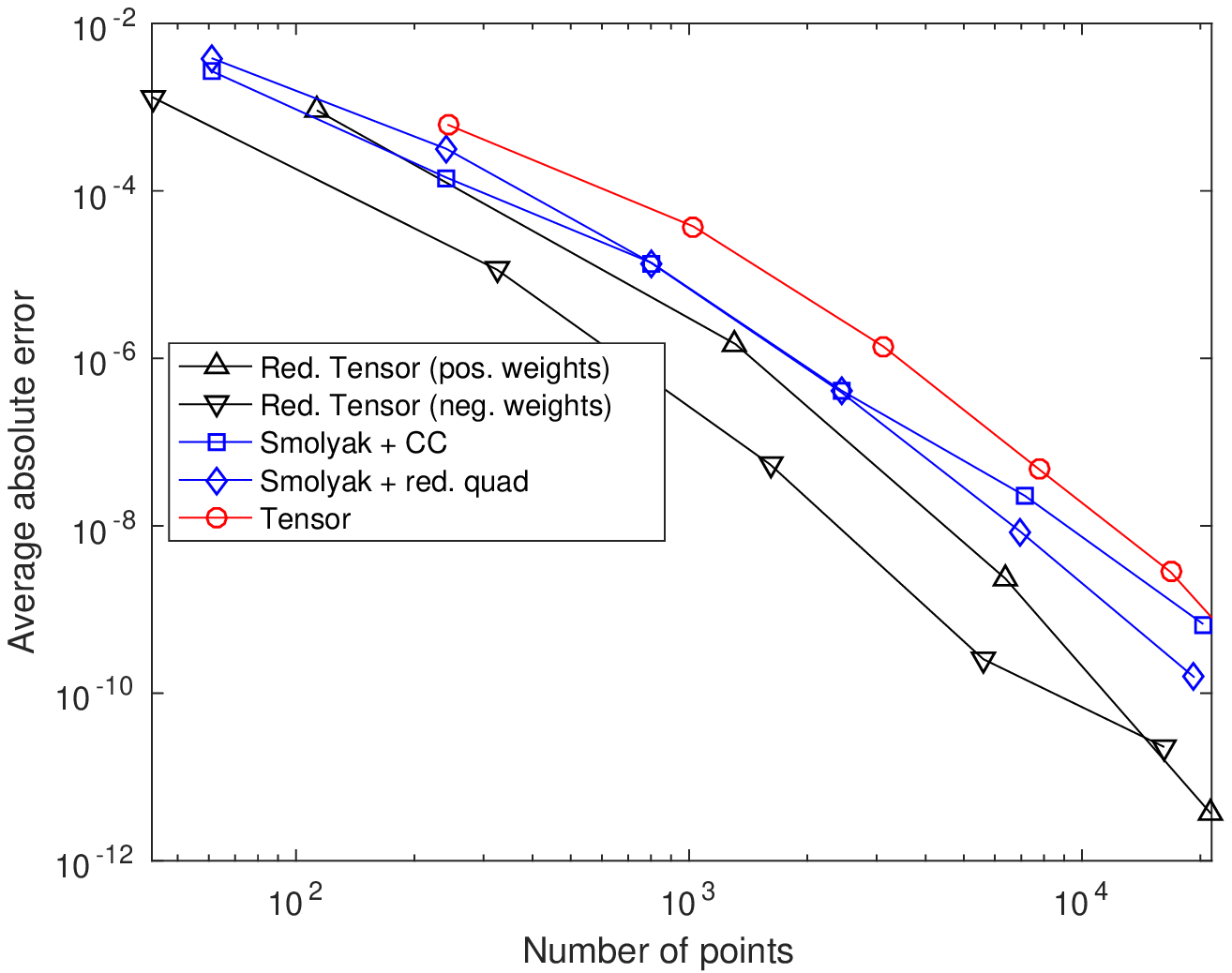}
		\subcaption{$f_4$}
	\end{minipage}
	\begin{minipage}{.45\textwidth}
		\centering
		\includegraphics[width=\textwidth]{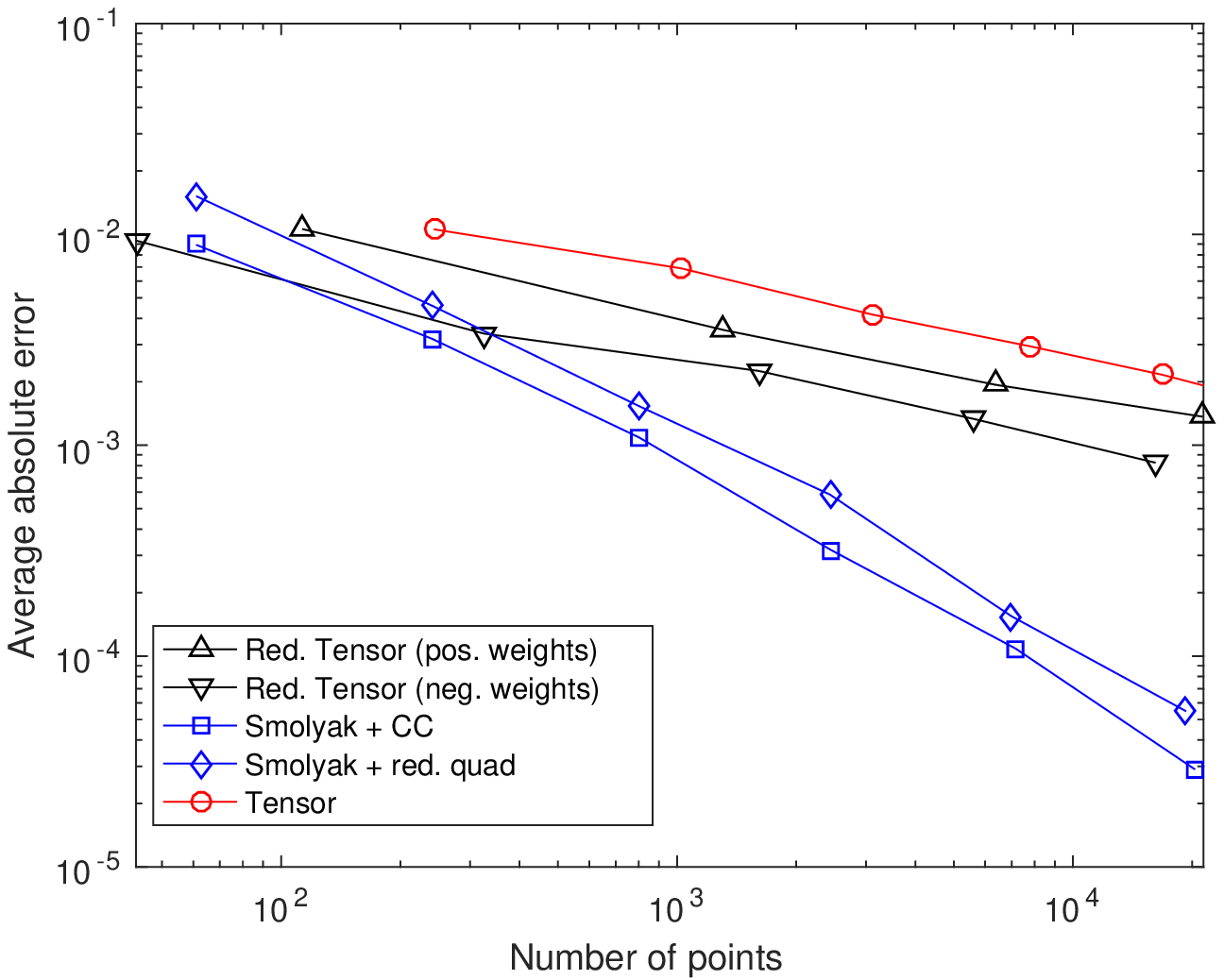}
		\subcaption{$f_5$}
	\end{minipage}
	\begin{minipage}{.45\textwidth}
		\centering
		\includegraphics[width=\textwidth]{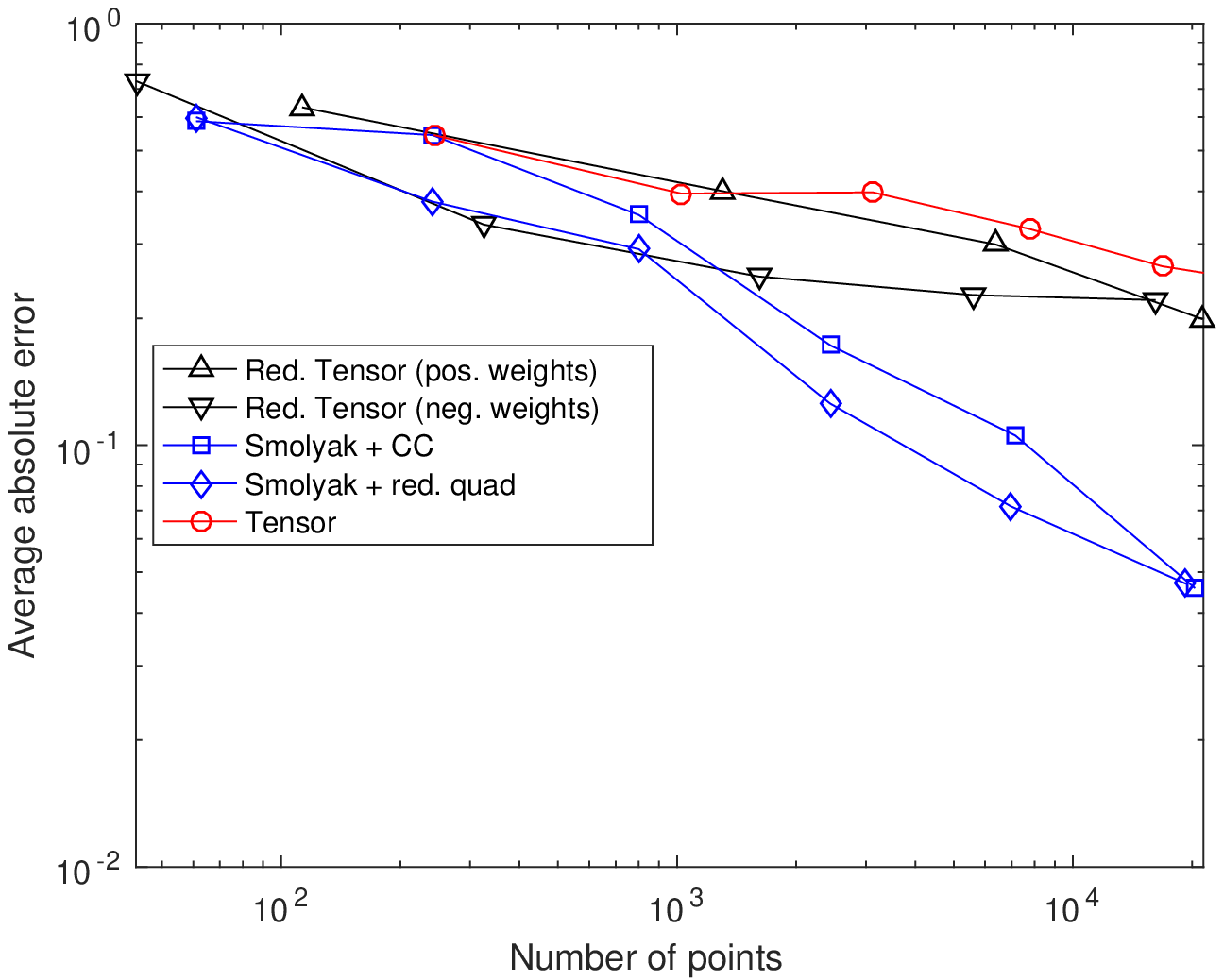}
		\subcaption{$f_6$}
	\end{minipage}
	\caption{The accuracy of several cubature rules versus the number of nodes that are in the cubature rule. All integrals are 5-dimensional. Here, ``red.\ quad'' and ``CC'' stand for ``reduced quadrature rule'' and ``Clenshaw--Curtis'' respectively.}
	\label{fig:genzllf}
\end{figure}

\subsubsection{Non-uniform distribution}
The reduced cubature rule can be determined for any distribution whose moments can be evaluated. Therefore we assess the convergence of the rules using a $\beta(10, 10)$-distribution, a highly non-uniform distribution. The Smolyak cubature rule with Clenshaw--Curtis quadrature rules is not considered anymore. We only examine the results of $f_1$, $f_2$, and $f_4$, i.e., the cases where convergence was observed.

The results are created in a similar way as in the uniform case, i.e., the coefficients $\mathbf{a}$ and $\mathbf{u}$ are chosen randomly subject to the constraints $\|\mathbf{a}\|_2 = 2.5$ and $\|\mathbf{u}\|_2 = 1$. The integration error is determined with respect to a reference value, calculated using a $30^5$ tensor grid created with Gaussian quadrature rules. Convergence plots are depicted in Figure~\ref{fig:genzllfbeta}.

In comparison with the previous results we see better convergence of $f_1$ for all cubature rules. This is due to the $\beta(10, 10)$-distribution, which damps the oscillations of the function under consideration. Moreover it is clearly visible that an approximate value of the integral is used here.

$f_2$ and $f_4$ show similar results: in both cases the reduced rule with negative weights shows the best results. The differences are larger in this case, which is due to the $\beta(10, 10)$-distribution that introduces many small weights in the initial quadrature and cubature rule. These weights are prone to removal in both the negative and positive reduction algorithm.

\begin{figure}[t]
	\centering
	\begin{minipage}{.45\textwidth}
		\centering
		\includegraphics[width=\textwidth]{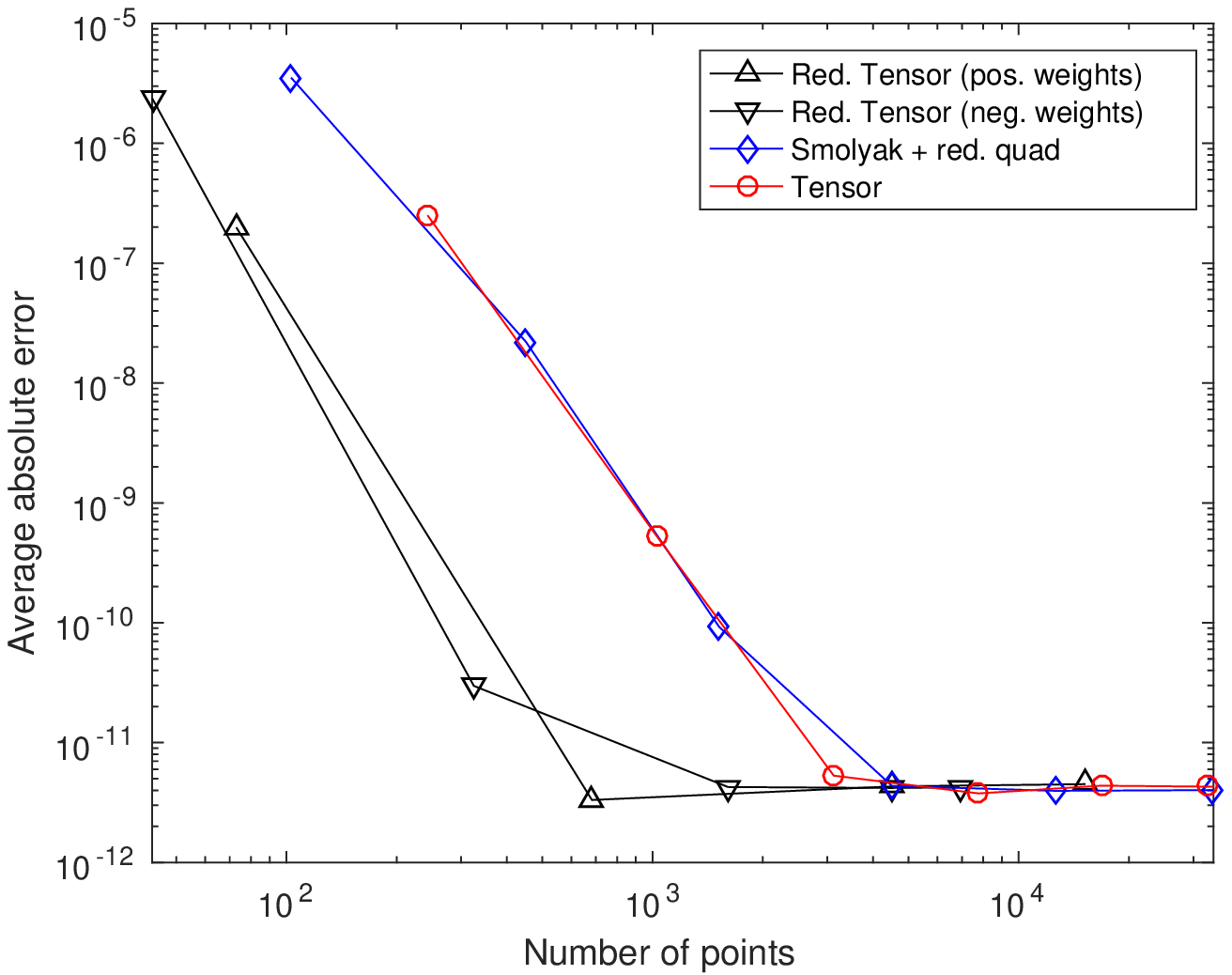}
		\subcaption{$f_1$}
	\end{minipage}
	\begin{minipage}{.45\textwidth}
		\centering
		\includegraphics[width=\textwidth]{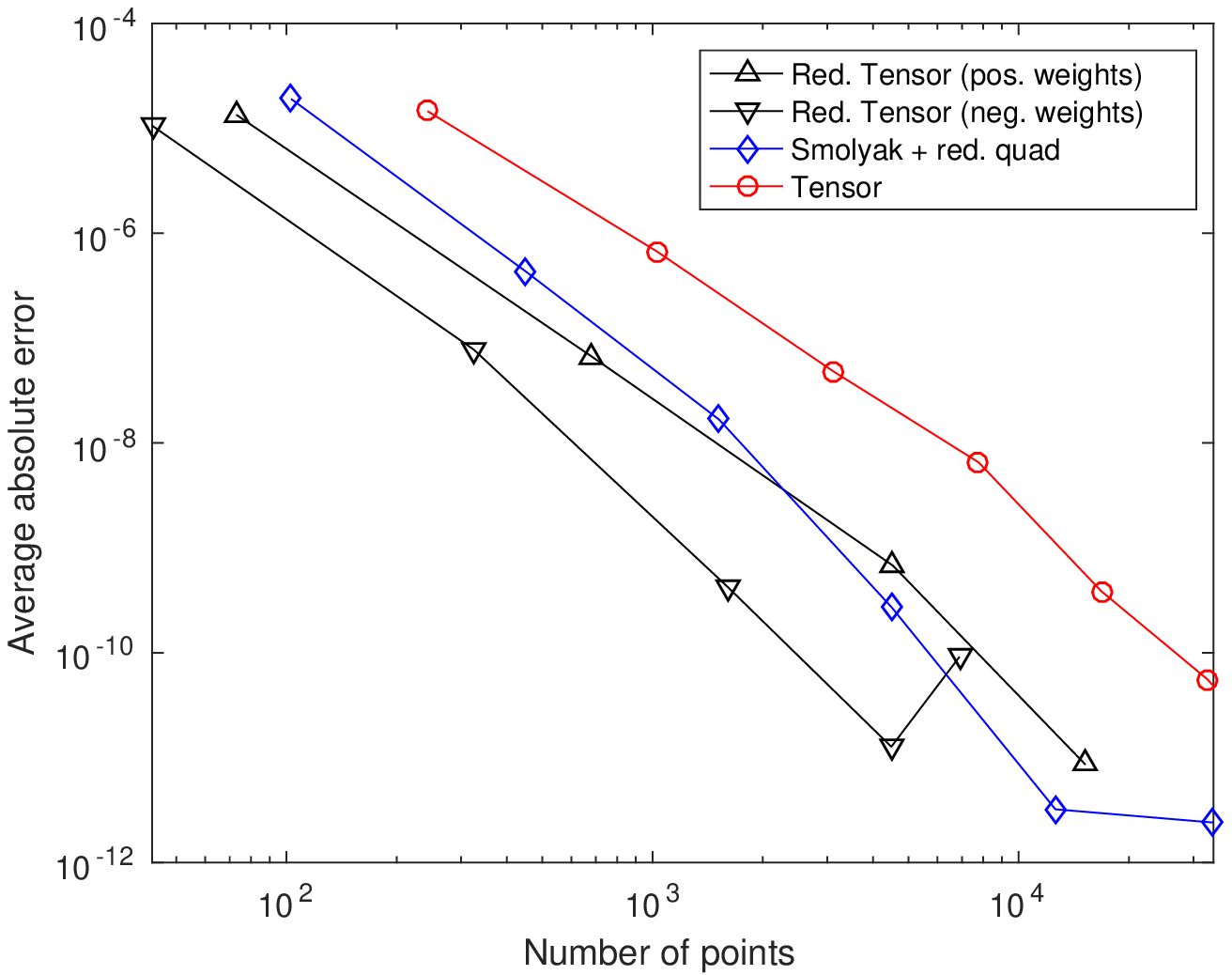}
		\subcaption{$f_2$}
	\end{minipage}
	\begin{minipage}{.45\textwidth}
		\centering
		\includegraphics[width=\textwidth]{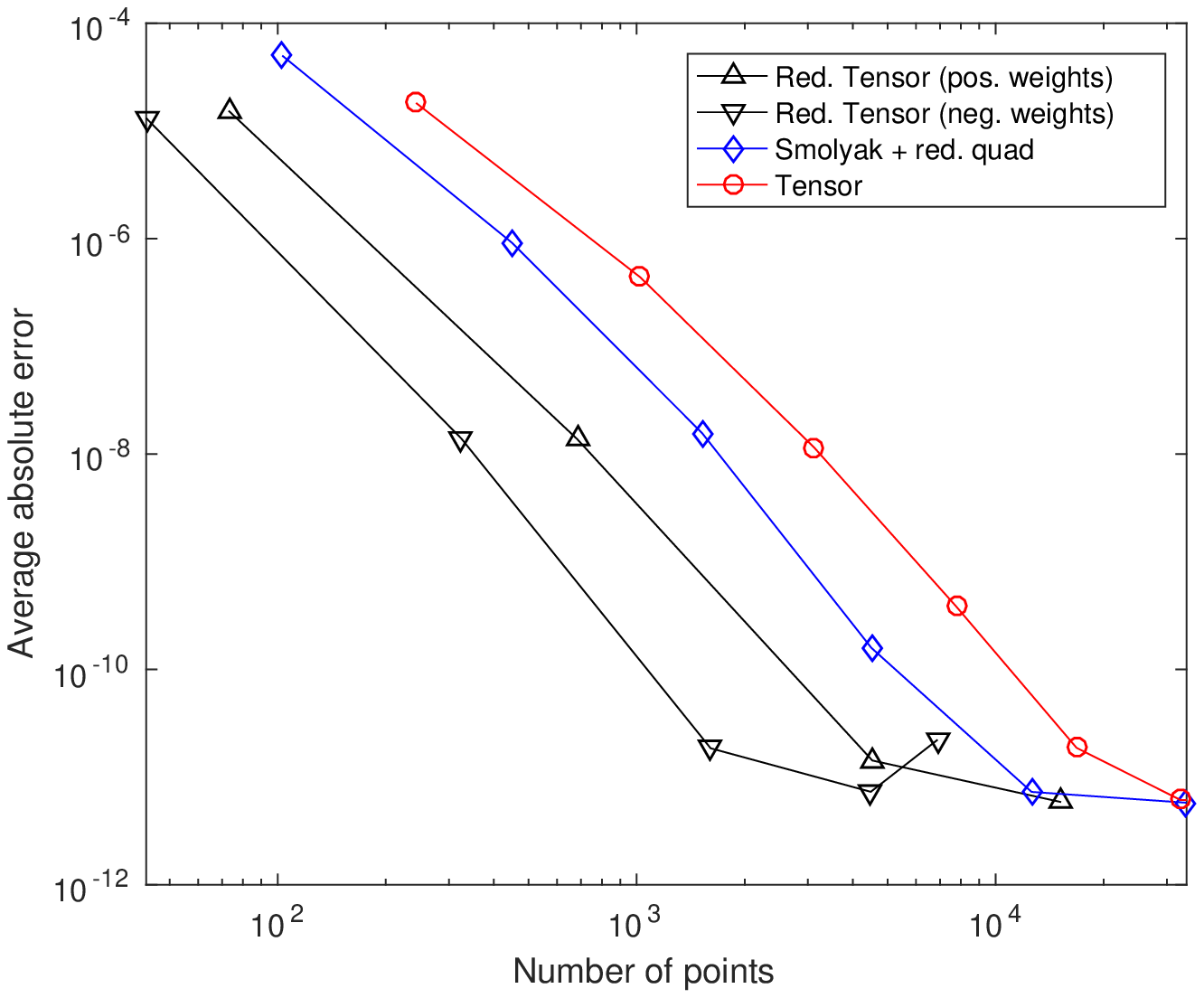}
		\subcaption{$f_4$}
	\end{minipage}
	\caption{The accuracy of several cubature rules versus the number of nodes that are in the cubature rule. All integrals are 5-dimensional, the distribution under consideration is $\beta(10, 10)$.}
	\label{fig:genzllfbeta}
\end{figure}

\subsubsection{Dimension dependence}
\label{subsubsec:dimdep}
All integrals so far have been determined using 5-dimensional rules. To quantify the performance of rules depending on the dimension, the integration error is studied for varying dimension.

The results are again created in a similar way as in the previous case, i.e.\ using the random coefficients and averaging the result. The uniform distribution is reconsidered, such that an exact value of the integral is known. We again limit ourselves to $f_1$, $f_2$, and $f_4$. All cubature rules are generated such that they are of degree 9. The number of nodes is not taken into account here (but can be found in Table~\ref{tbl:results}). The integration errors up to 10 dimensions are plotted in Figure~\ref{fig:genzllgrowth}.

The oscillatory function $f_1$ and the Gaussian function do not become more difficult to integrate in higher dimensions, as the Taylor expansions are comparable to the one-dimensional case. However, the mass of the product peak of $f_2$ becomes smaller as the dimension increases, which makes the integral easier to evaluate numerically, as integrating the peak accurately becomes of less importance. This is also reflected in the results. The tensor product rule shows excellent results for both functions, which is due to the exact integration of more polynomials in comparison to the other rules.

The positive reduced cubature rule shows the smallest growth compared to the other rules (excluding the tensor rule). This is due to the positive weights, that yield a condition number equal to 1.

The negative reduced cubature rule and the two Smolyak rules show similar growth. Although the negative reduced rule does not reduce or increase the integration error, it does need much less nodes to obtain this error compared with both Smolyak rules (see Table~\ref{tbl:results}).

In summary, the tensor grid yields the largest cubature rule and shows the smallest error. The reduced positive rule needs less nodes, but yields a larger error. The two Smolyak rules and the reduced negative rule have approximately equivalent error, but the reduced negative rule yields a much smaller grid.

\begin{figure}[h!]
	\centering
	\begin{minipage}{.45\textwidth}
		\centering
		\includegraphics[width=\textwidth]{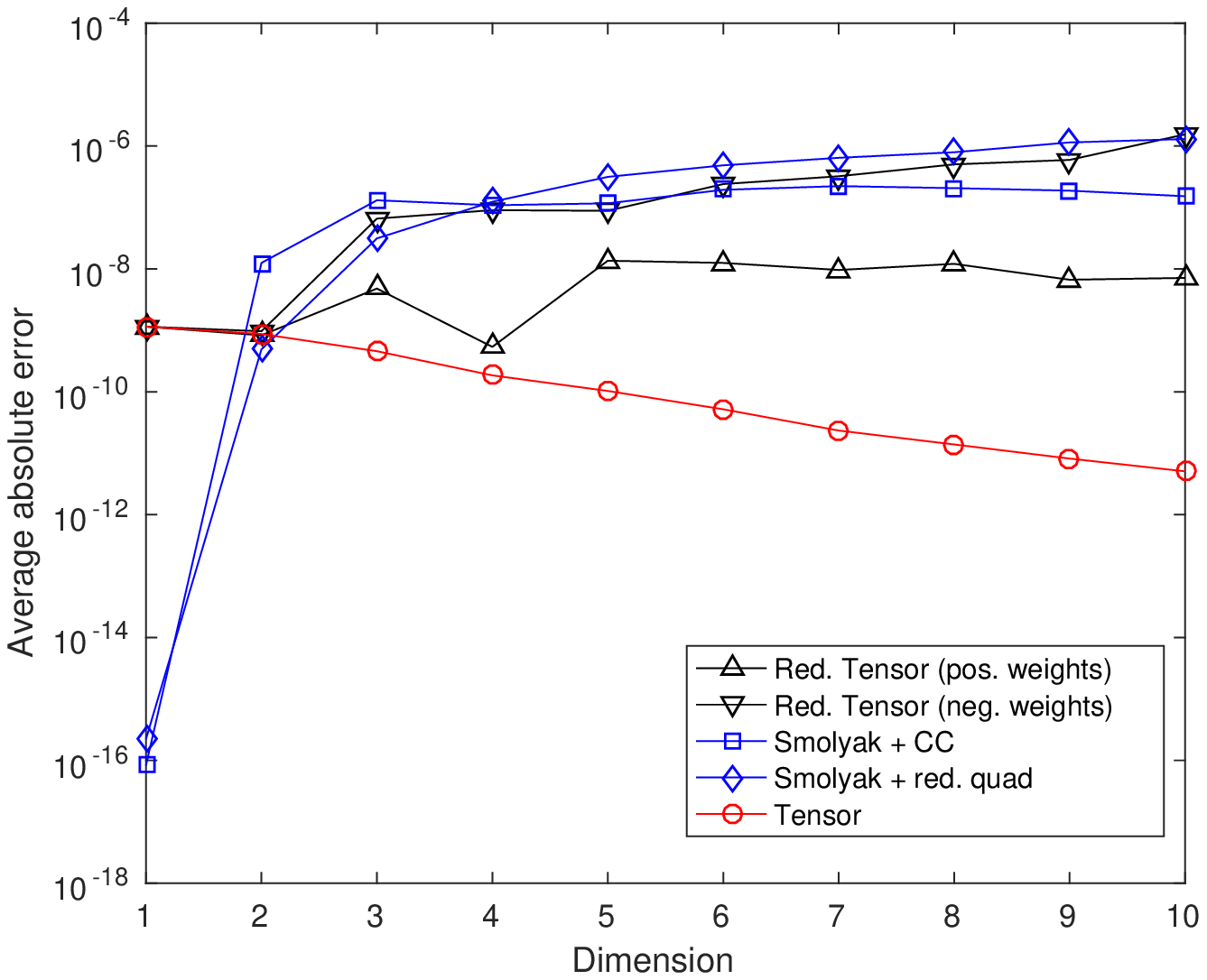}
		\subcaption{$f_1$}
	\end{minipage}
	\begin{minipage}{.45\textwidth}
		\centering
		\includegraphics[width=\textwidth]{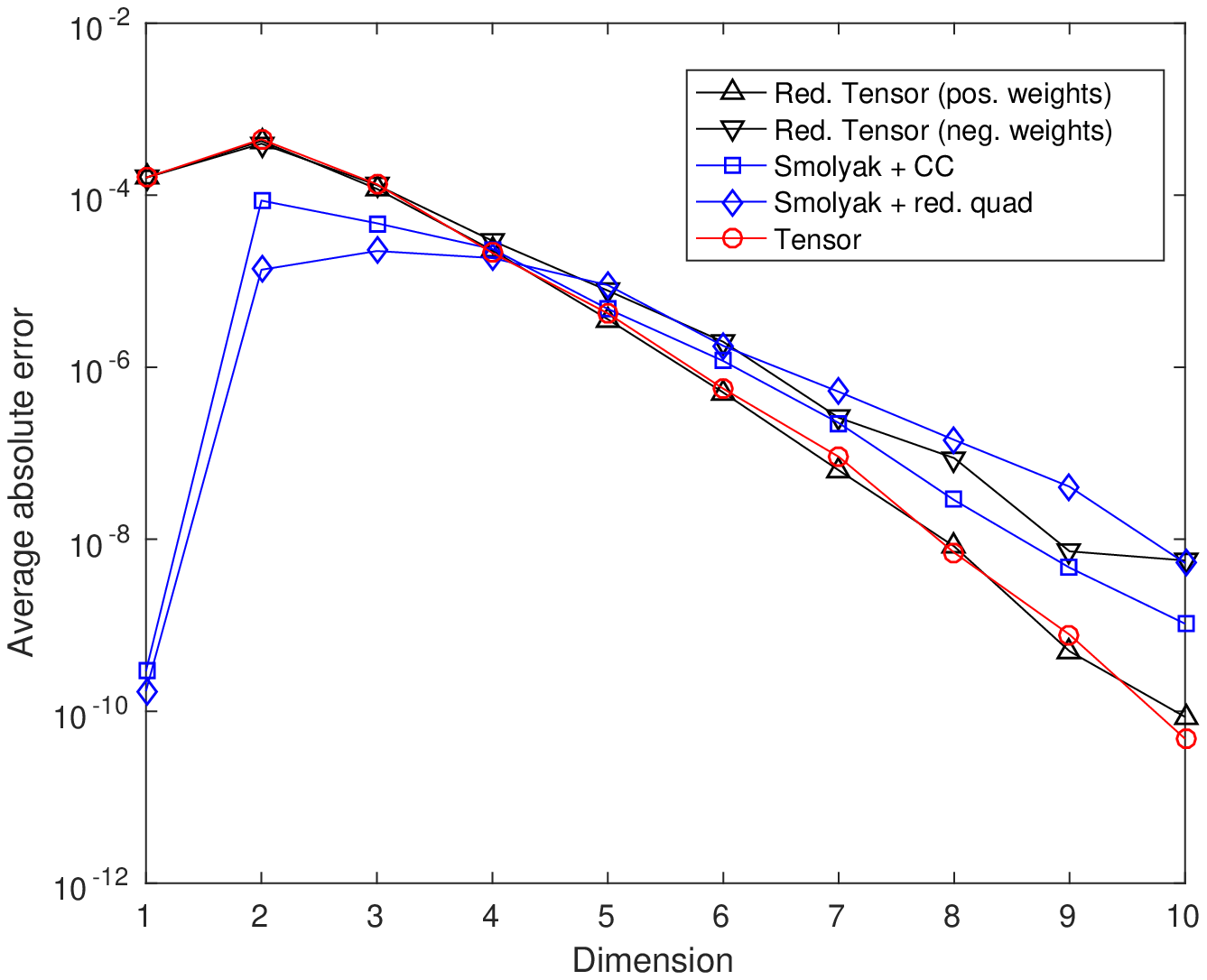}
		\subcaption{$f_2$}
	\end{minipage}
	\begin{minipage}{.45\textwidth}
		\centering
		\includegraphics[width=\textwidth]{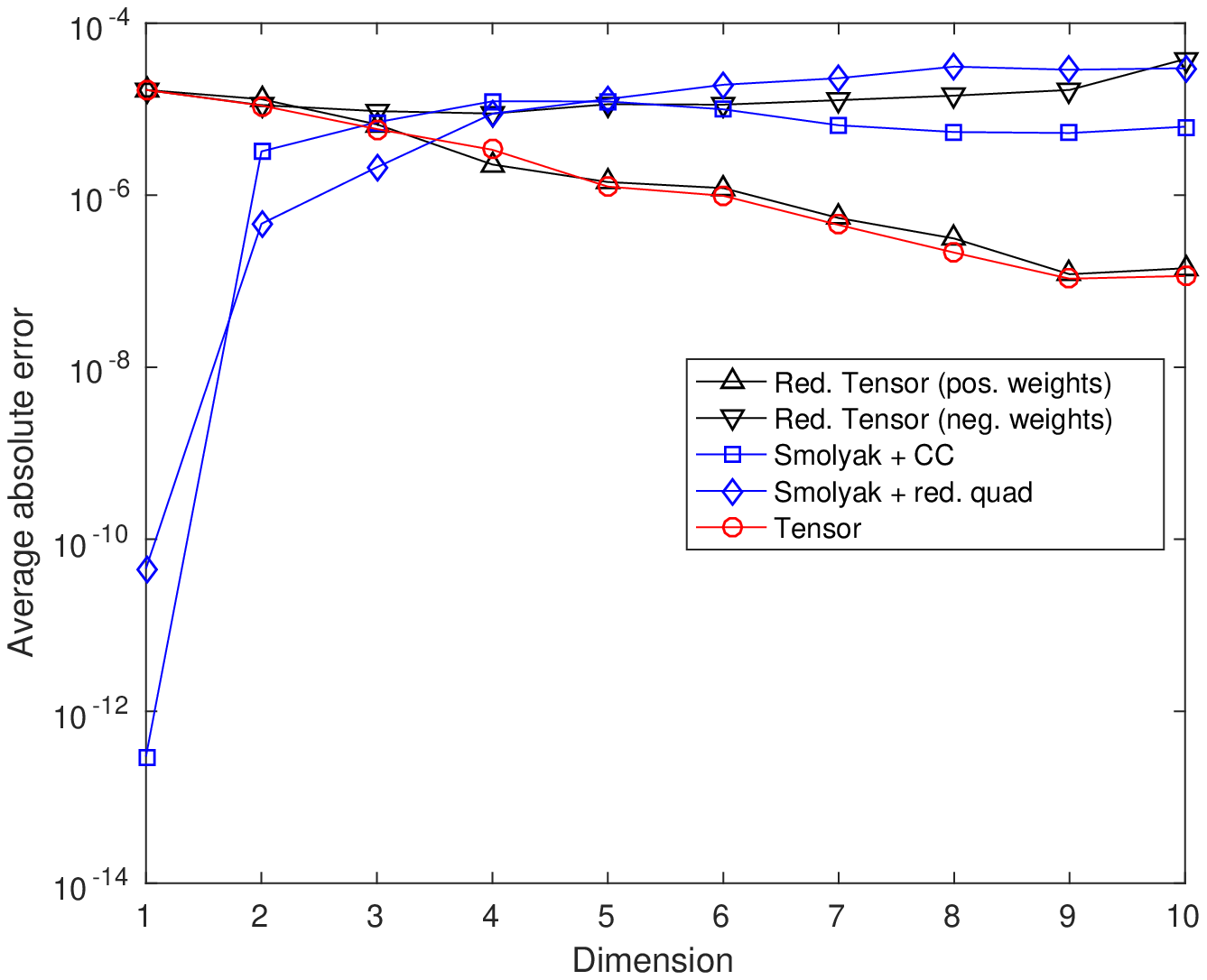}
		\subcaption{$f_4$}
	\end{minipage}
	\caption{The accuracy of the cubature rules under consideration versus the dimension. All cubature rules are of degree 9 and have minimal number of nodes.}
	\label{fig:genzllgrowth}
\end{figure}

\subsection{Lid-driven cavity flow test case, using a Lattice Boltzmann method}
\subsubsection{Problem description}
The standard lid-driven cavity flow (e.g.~\cite{Ghia1982}) is considered with two uncertain flow parameters, and four UQ methods are compared, namely MC, SC with a Smolyak sparse grid, and SC with the two new reduced cubature rules. A sketch of the geometry and the imposed boundary conditions is given in Figure~\ref{fig:lcf}. The boundary condition imposed at both singular corners is $\mathbf{u} = 0$, where $\mathbf{u}$ is the fluid velocity vector.

\begin{figure}[t]
	\centering
	\includegraphics[width=.3\textwidth]{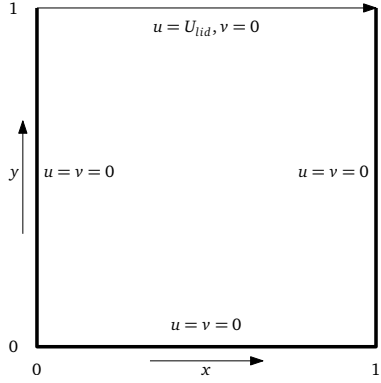}
	\caption{The geometry and boundary conditions of the lid-driven cavity flow.}
	\label{fig:lcf}
\end{figure}

The deterministic problem is solved using a Lattice Boltzmann method. The implementation used for the current case is a straightforward D2Q9 BGK-model using Zou--He boundary conditions \cite{Zou1997}. Reference data for several values of the Reynolds number can be found in Ghia et al.~\cite{Ghia1982}. The results from the Lattice Boltzmann implementation compare well with the data provided (see Figure~\ref{fig:ghia}).

\begin{figure}
	\centering
	\begin{minipage}{.45\textwidth}
		\includegraphics[width=\textwidth]{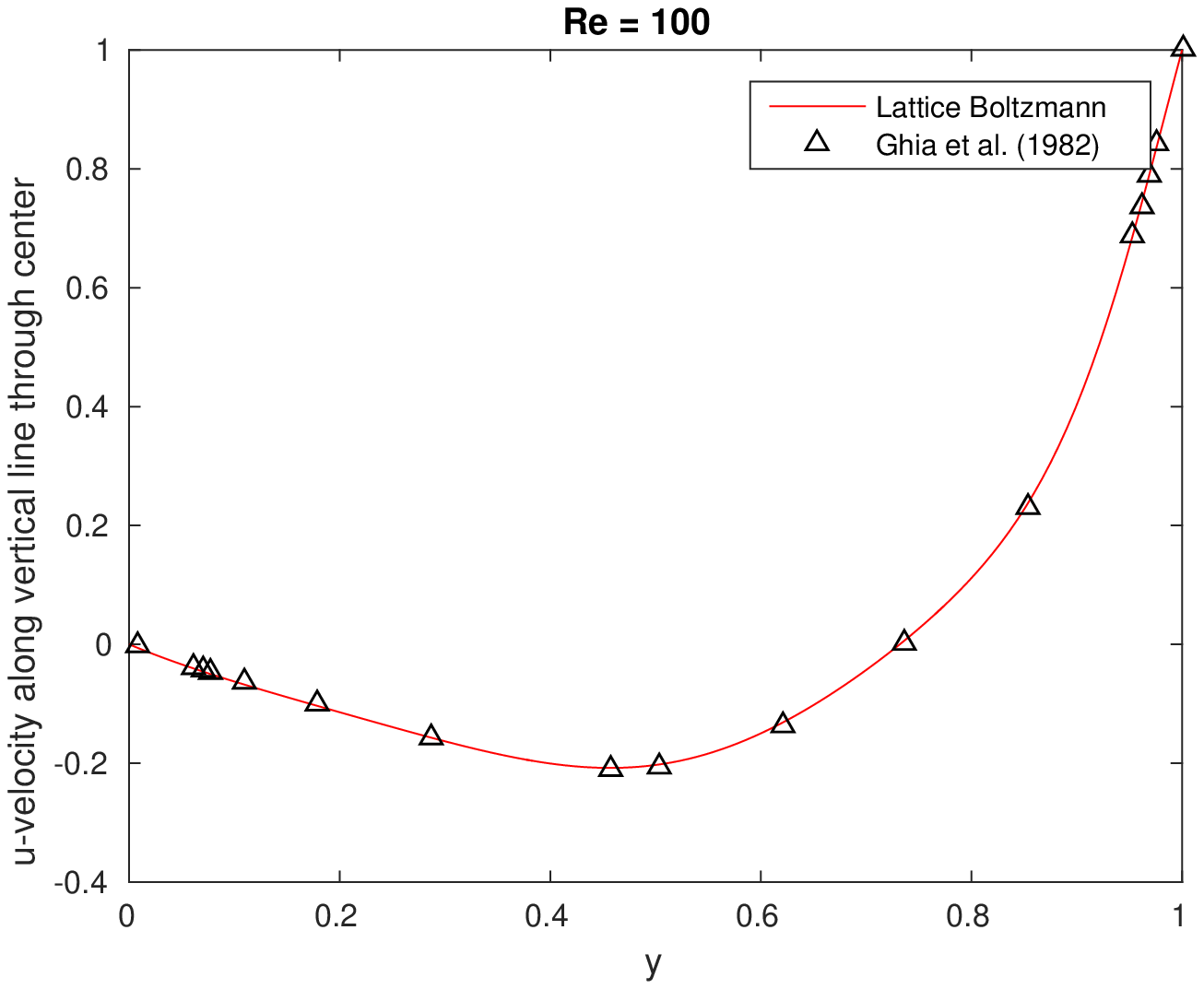}
	\end{minipage}
	~
	\begin{minipage}{.45\textwidth}
		\includegraphics[width=\textwidth]{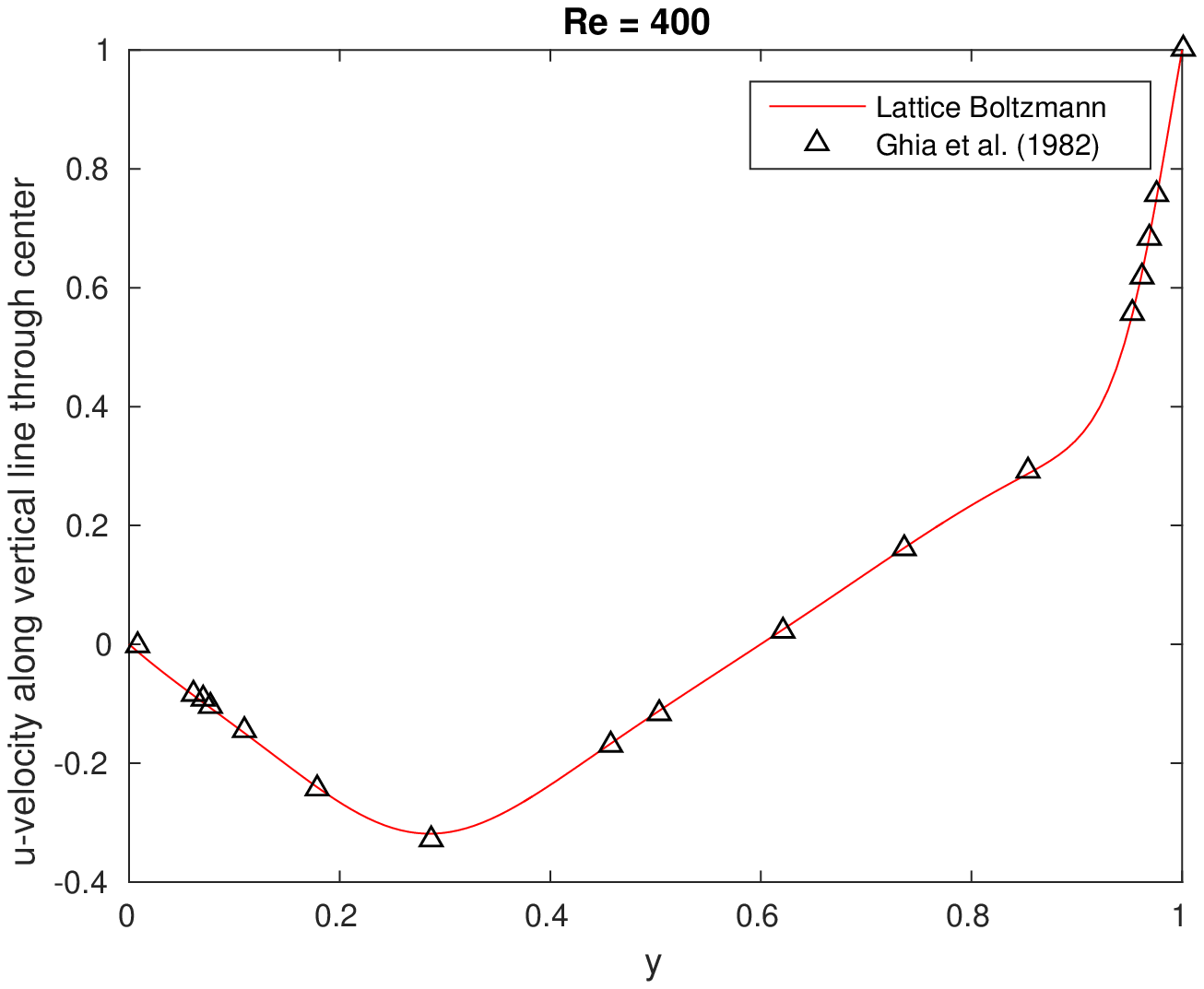}
	\end{minipage}
	\caption{The $u$-component of the flow velocity along the vertical line through the geometrical center of the cavity.}
	\label{fig:ghia}
\end{figure}

Two uncertain parameters are specified (see Table~\ref{tbl:lbmuparams}). Both parameters have a $\beta(a, b)$-distribution, with probability density function
\begin{equation}
	p(x; a, b) \propto x^{a-1} (1 - x)^{b-1} \text{ for } 0 \leq x \leq 1.
\end{equation}
The ranges are chosen such that the Reynolds number based on the lid velocity is between 10 and 400 (see Figure~\ref{fig:lbmuqcornercases} for the solutions of the two extreme cases).

\begin{table}[b]
	\centering
	\caption{Uncertain parameters and their distribution as considered for the lid-driven cavity flow problem.}
	\label{tbl:lbmuparams}
	\begin{tabular}{ll}
		\textbf{Parameter} & \textbf{Distribution} \\
		\hline
		\hline
		$u_\textrm{lid}$ (speed of the lid) & $\beta(3, 3)$ with range $(0.5, 1.5)$ \\
		$\nu$ (viscosity) & $\beta(4, 4)$ with range $(0.0038, 0.05)$
	\end{tabular}
\end{table}

\begin{figure}[t]
	\centering
	\begin{minipage}{.4\textwidth}
		\includegraphics[width=\textwidth]{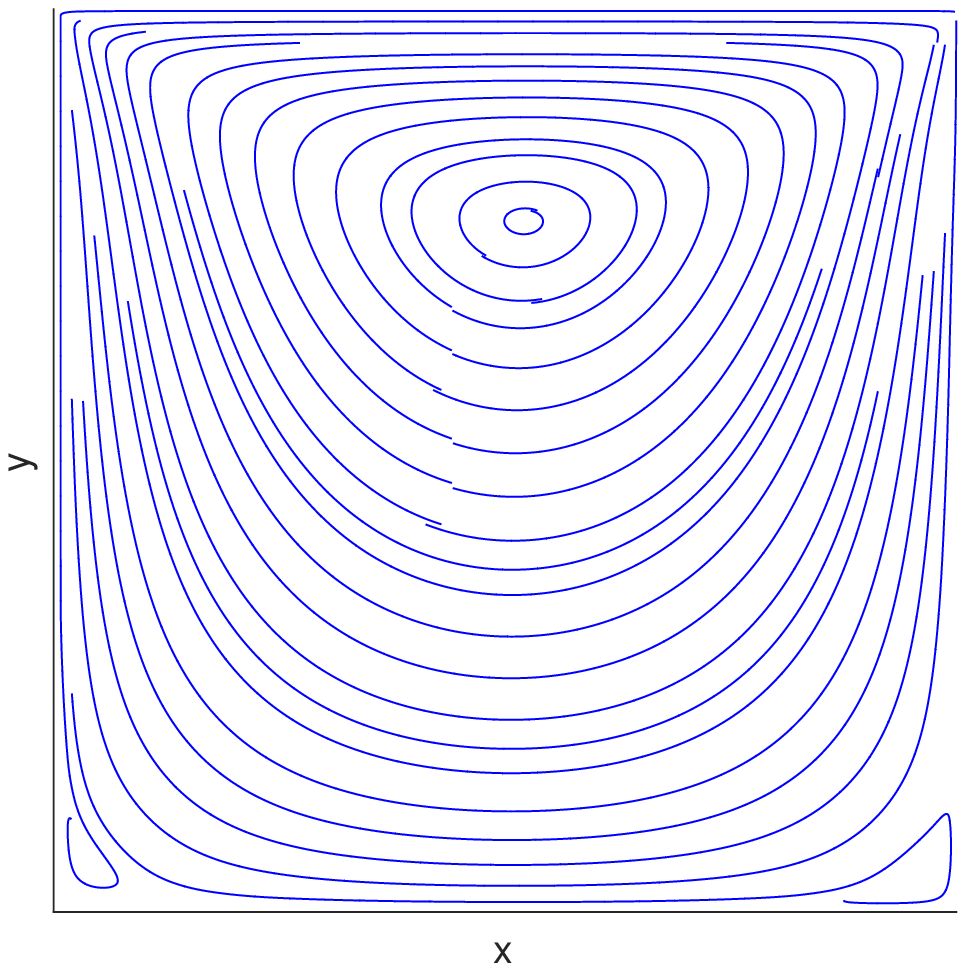}
		\subcaption{$Re = 10$}
	\end{minipage}
	~
	\begin{minipage}{.4\textwidth}
		\includegraphics[width=\textwidth]{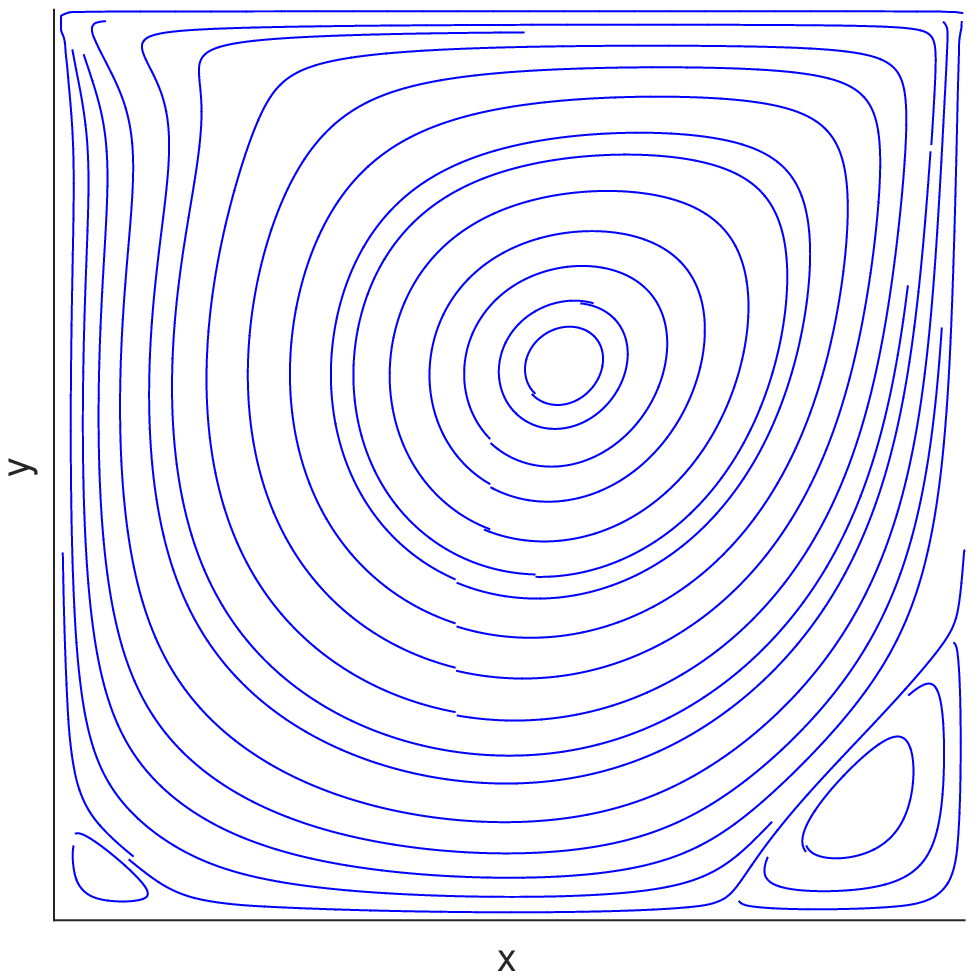}
		\subcaption{$Re = 400$}
	\end{minipage}
	\caption{Stream lines of the lid-driven cavity flow for the two extreme cases considered in the UQ problem.}
	\label{fig:lbmuqcornercases}
\end{figure}

\begin{figure}
	\centering
	\begin{minipage}{.4\textwidth}
		\includegraphics[width=\textwidth]{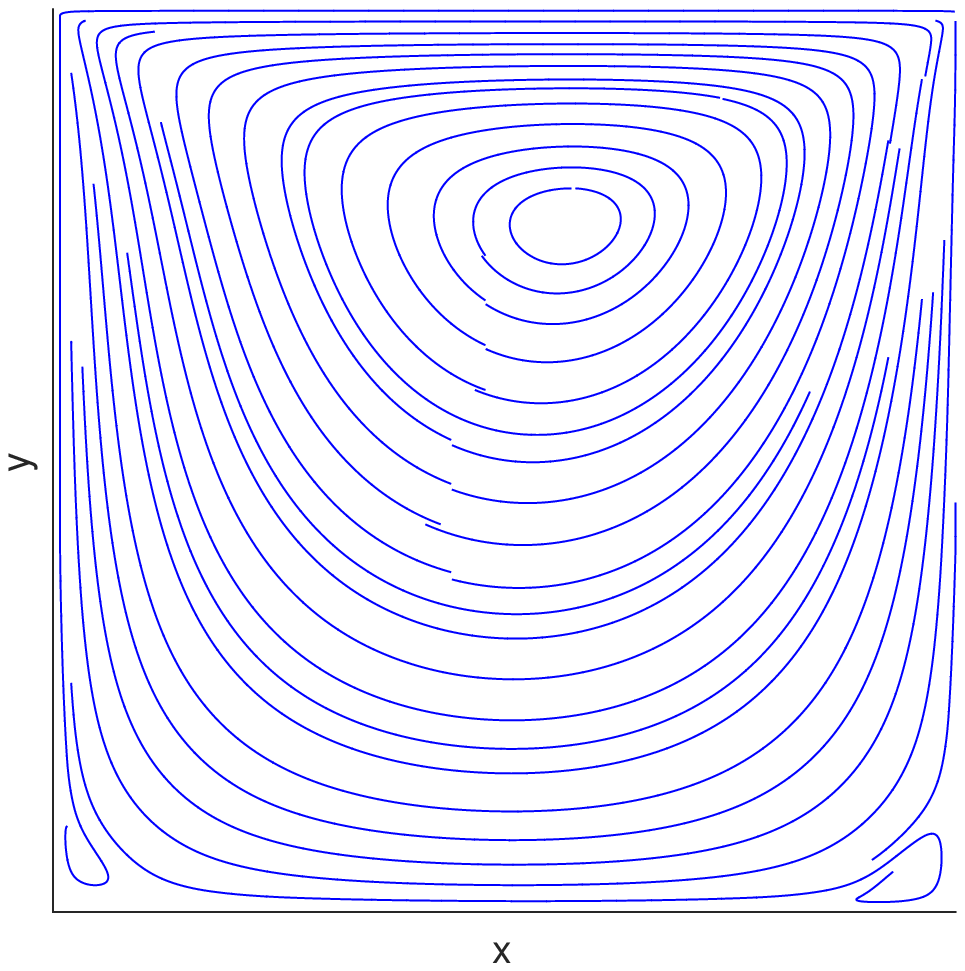}
	\end{minipage}
	~
	\begin{minipage}{.5\textwidth}
		\includegraphics[width=\textwidth]{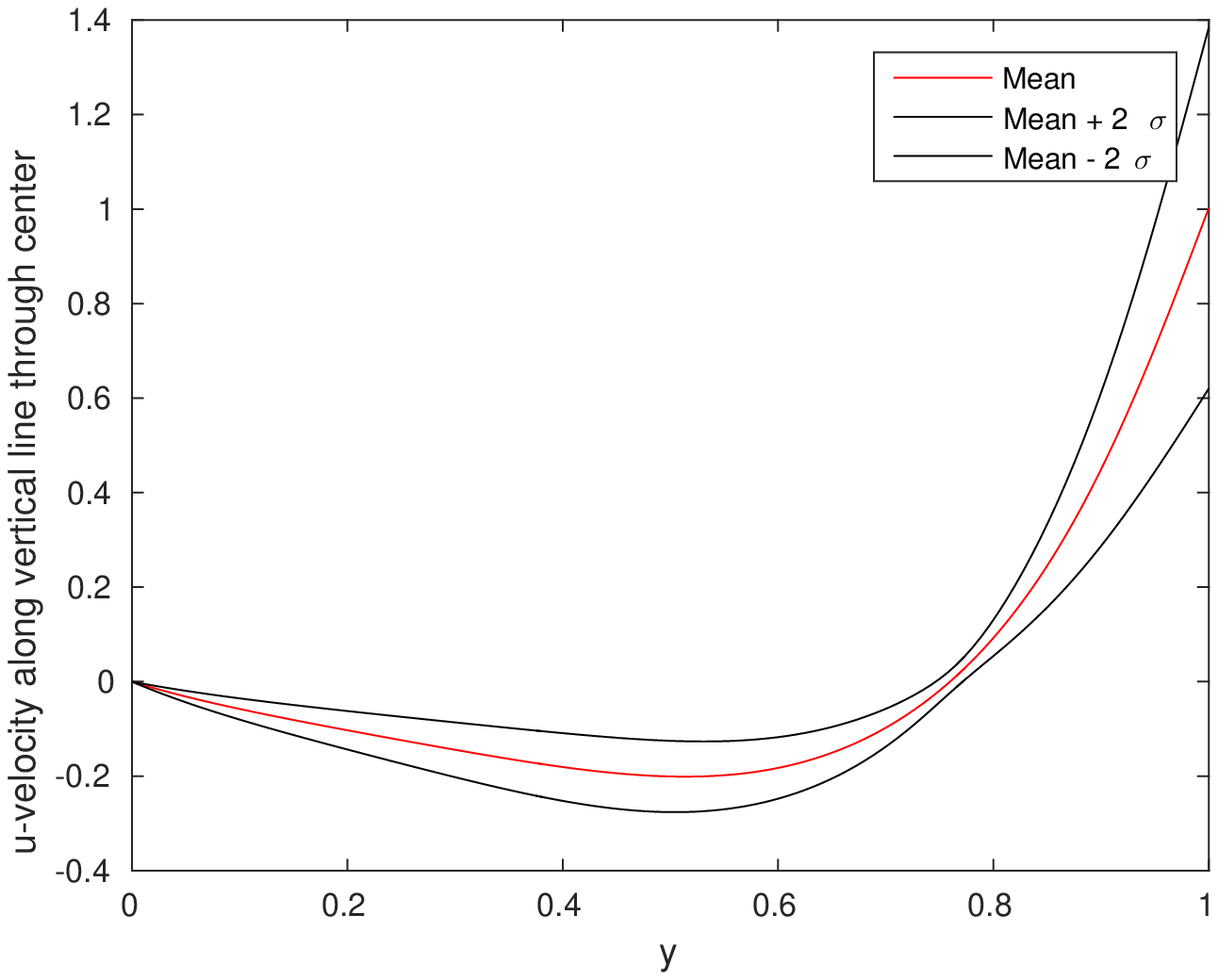}
	\end{minipage}
	\caption{Left: stream lines of the mean flow of the lid-driven cavity flow. Right: mean velocity component $u$ at $x = \frac{1}{2}$ with $2\sigma$ (``2 times standard deviation'') ranges.}
	\label{fig:lbmsolution}
\end{figure}

\subsubsection{Results}
To evaluate the accuracy of the methods we consider the $u$-component of the fluid velocity everywhere in the domain. A reference mean solution is obtained using a fine tensor product rule of $65 \times 65$ Gaussian nodes in the parameter space, resulting in a reference mean solution $\bar u^*$, depicted in Figure~\ref{fig:lbmsolution} (the tensor grid is plotted in Figure~\ref{fig:lbmgrids:tensor}).

\begin{figure}
	\centering
	\begin{minipage}{.45\textwidth}
		\centering
		\includegraphics[width=\textwidth]{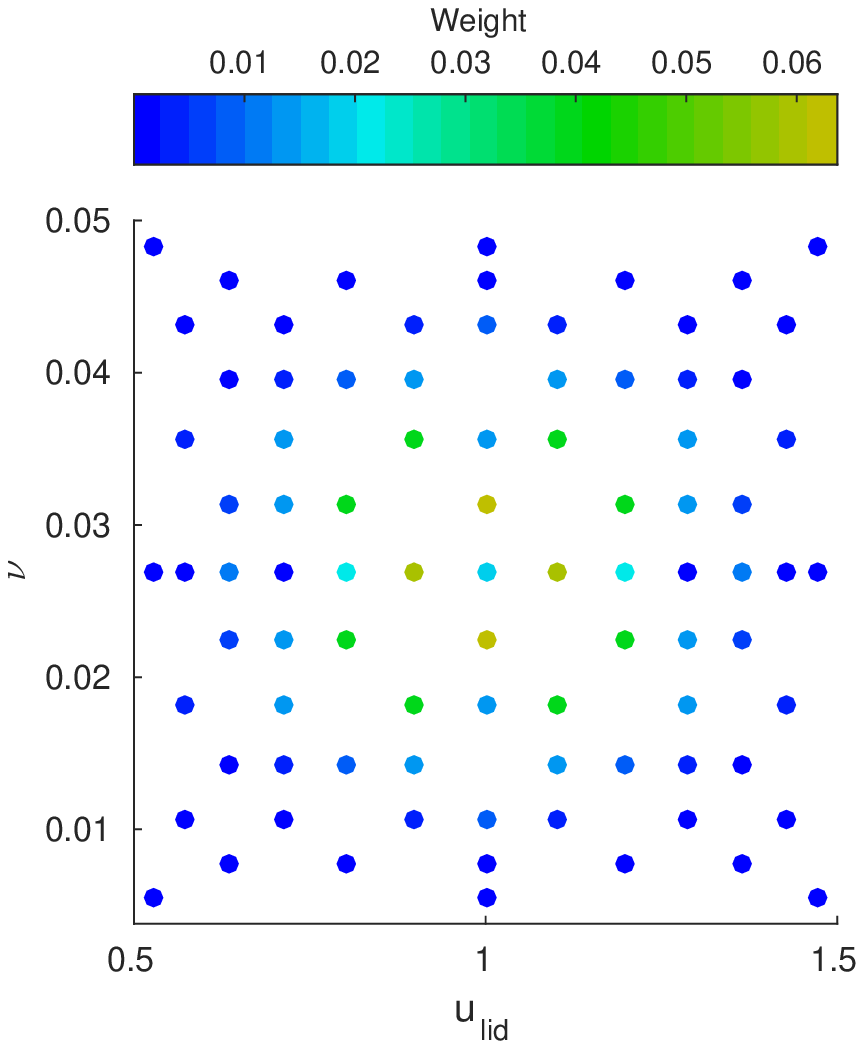}
		\subcaption{Symmetric red.\ cubature rule (87~nodes)}
		\label{fig:lbmgrids:nd1}
	\end{minipage}
	\begin{minipage}{.45\textwidth}
		\centering
		\includegraphics[width=\textwidth]{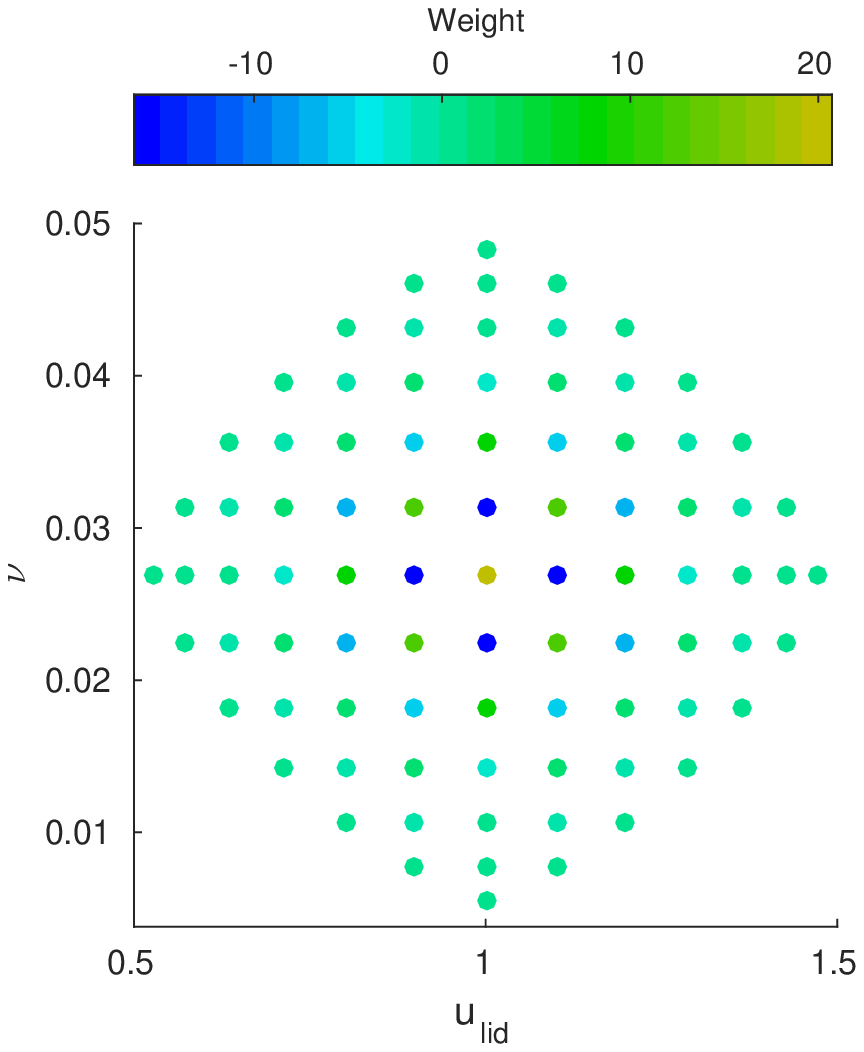}
		\subcaption{Neg.\ symmetric red.\ cubature rule (85~nodes)}
		\label{fig:lbmgrids:nd2}
	\end{minipage}
	\begin{minipage}{.45\textwidth}
		\centering
		\includegraphics[width=\textwidth]{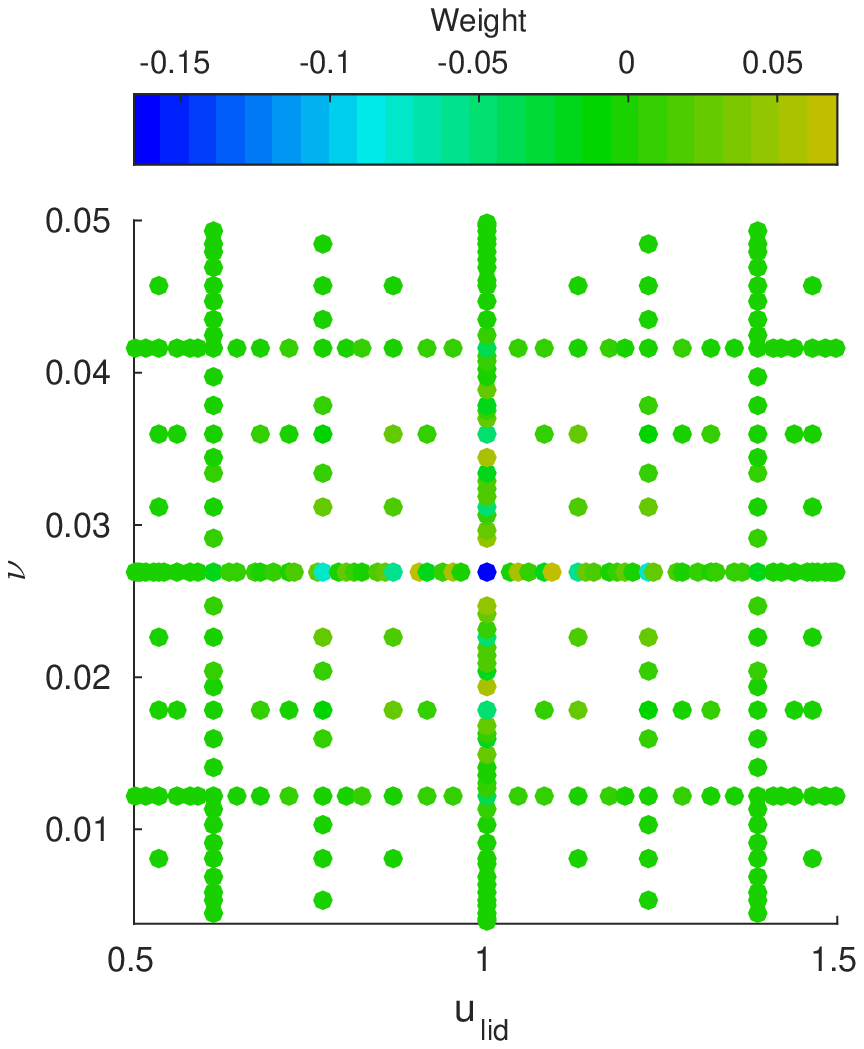}
		\subcaption{Smolyak sparse grid (321~nodes)}
		\label{fig:lbmgrids:sc1}
	\end{minipage}
	\begin{minipage}{.45\textwidth}
		\centering
		\includegraphics[width=\textwidth]{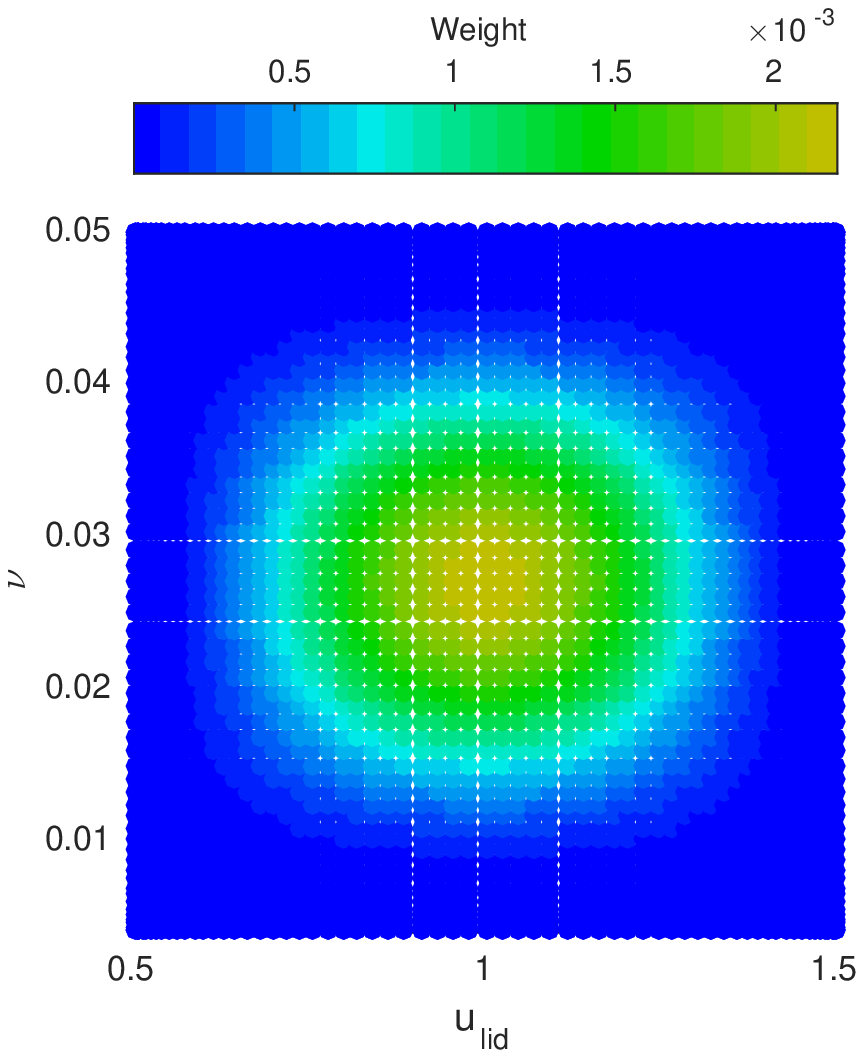}
		\subcaption{``Exact'' tensor product (4225~nodes)}
		\label{fig:lbmgrids:tensor}
	\end{minipage}
	\caption{The grids used for UQ in the Lattice Boltzmann test case. All grids (except the tensor product grid) are of degree 13.}
	\label{fig:lbmgrids}
\end{figure}

UQ is applied with four different methods:
\begin{enumerate}
	\item MC using random samples;
	\item SC with a Smolyak sparse grid created with positive symmetric reduced Gauss--Jacobi quadrature rules, see Figure~\ref{fig:lbmgrids:sc1};
	\item SC with the symmetric reduced rule initiated with a tensor grid with positive weights, see Figure~\ref{fig:lbmgrids:nd1};
	\item SC with the negative symmetric reduced rule, see Figure~\ref{fig:lbmgrids:nd2}.
\end{enumerate}
The number of nodes is chosen in such a way that the degree of the resulting rule equals 13. For the Smolyak cubature rule, this can be achieved by choosing $K = 8$, because then $2(K-d)+1 = 13$ (see Lemma~\ref{lmm:smoldegree}). The initial quadrature rule is chosen such that it is the finest quadrature rule used by the Smolyak procedure. For the reduced cubature rules, the initial cubature rule is a $13 \times 13$ tensor grid of Gaussian quadrature rules. All grids (except the MC nodes) are shown in Figure~\ref{fig:lbmgrids}.

For a measure of accuracy we take the $L^2$-norm of the difference between the predicted mean velocity field $\bar u(N)$ and the reference mean field $\bar u^*$, where $N$ is the number of nodes, i.e.\ 
\begin{equation}
	\epsilon(N) \coloneqq \|\bar u(N) - \bar u^* \|_2.
\end{equation}

The convergence is shown in Figure~\ref{fig:convergence}. The $\mathcal{O}(1/\sqrt{N})$ convergence of MC is already becoming apparent. The polynomial-based methods seem to show spectral convergence, suggesting that the response is smooth with respect to the parameters, as might be expected from physical considerations. Of the polynomial methods, the Smolyak rule and symmetric reduced rule perform approximately the same. The negative symmetric reduced rule performs poorly. The authors attribute this to the low dimension of the problem; we have seen the benefits of allowing negative weights primarily in five and more dimensions. In low-dimensional cases the difference in the number of nodes is too small to overcome the large absolute differences in weights (see the color bar in Figure~\ref{fig:lbmgrids:nd2}).

\begin{figure}[t]
	\centering
	\includegraphics[width=.5\textwidth]{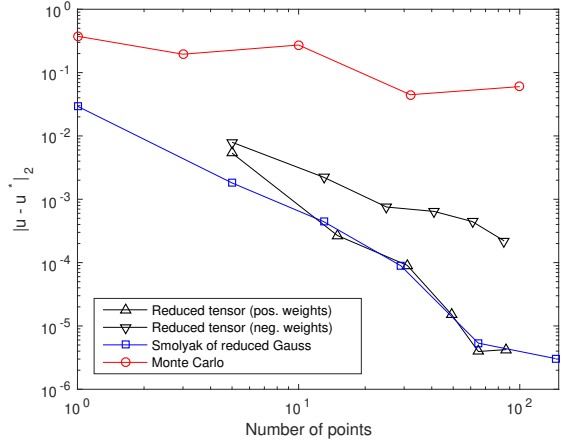}
	\caption{The 2-norm error of the Lattice Boltzmann test case using the discussed cubature rules for varying numbers of nodes.}
	\label{fig:convergence}
\end{figure}

\subsection{Aircraft aerodynamics test case, using the Euler equations and a finite-volume method}
\subsubsection{Problem description}
To show the performance the reduced cubature rule with negative weights (i.e.\ high accuracy with a relatively small number of nodes) we consider an aircraft aerodynamics test case with seven uncertain parameters. Only the reduced cubature rule with negative weights is used as the other cubature rules require too many nodes.

The geometry of the airplane is based on a sample airplane geometry of the program \texttt{sumo} \cite{Sumo2009}, the so-called ``twin-engine utility aircraft'' (see Figure~\ref{fig:plane}).

As flow model we consider the Euler equations of gas dynamics. The Euler-flow problem is solved using the second-order accurate finite-volume code $\text{SU}^2$ \cite{Palacios2014}.  The tools \texttt{sumo} and \texttt{TetGen} are used for mesh generation \cite{Si2015}. Besides modeling the surfaces, \texttt{sumo} creates surface meshes, which are used as input for \texttt{TetGen} which generates the volume meshes with a spherical far field boundary.  See Figure~\ref{fig:plane_example} for an example solution.

\begin{figure}
	\centering
	\includegraphics[width=.6\textwidth]{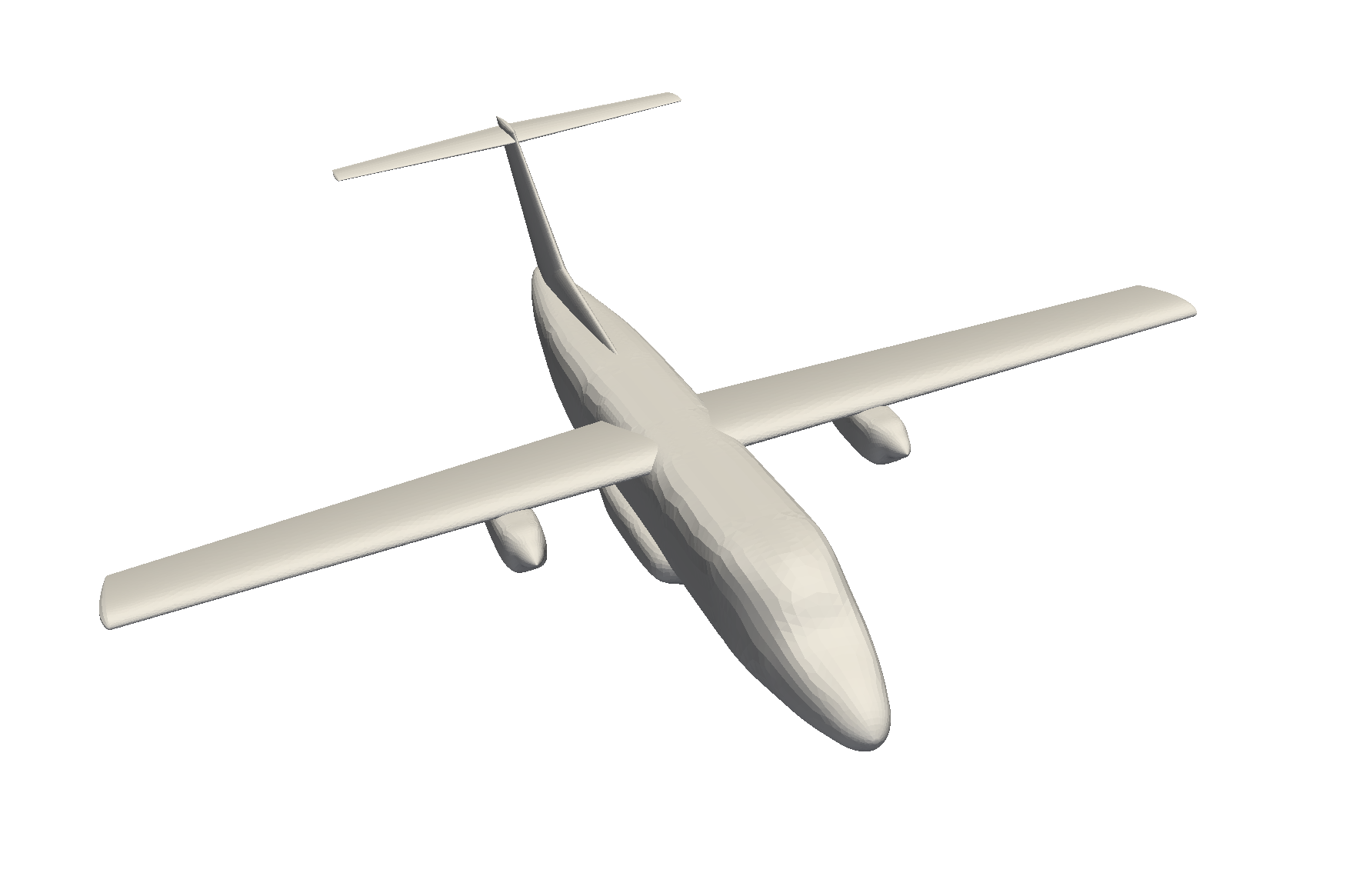}
	\caption{The geometry of the twin-engine utility aircraft without rotor blades.}
	\label{fig:plane}
\end{figure}

Of the seven uncertain parameters, three are geometrical and assumed to be normally distributed. The mean is the base geometry value and the standard deviation is defined to be 5\%. Four uncertain operational parameters are considered in addition, all modeled as $\beta(4,4)$ variables. See Table~\ref{tbl:acuparams} for details.

\begin{table}[t]
	\centering
	\caption{Uncertain parameters and their distribution as considered for the aircraft aerodynamics test case.}
	\label{tbl:acuparams}
	\begin{tabular}{ll}
		\textbf{Parameter} & \textbf{Distribution} \\
		\hline
		\hline
		Leading edge radius & $\mathcal{N}$ with 5\% standard deviation \\
		Maximum camber as percentage of the chord & $\mathcal{N}$ with 5\% standard deviation \\
		Distance of maximum camber from leading edge & $\mathcal{N}$ with 5\% standard deviation \\
		\hline
		Angle of incidence & $\beta(4, 4)$ with range $2.31^\circ \pm 5\%$ \\
		Side-slip angle & $\beta(4, 4)$ with range $0^\circ \pm 0.5^\circ$ \\
		Mach number & $\beta(4, 4)$ with range $0.72 \pm 5\%$ \\
		Free-stream pressure & $\beta(4,4)$ with range $101\,325 ~\textrm{N}/\textrm{m}^2 \pm 5\%$
	\end{tabular}
\end{table}

The geometrical uncertain parameters are specifically chosen such that the 4-digit NACA airfoil series can be used to parameterize them. The three parameters of the NACA series are essentially these parameters. The base geometry, which defines the mean of the distributions of these parameters, is that of the NACA2412 airfoil.

\begin{figure}
	\centering
	\includegraphics[width=.6\textwidth]{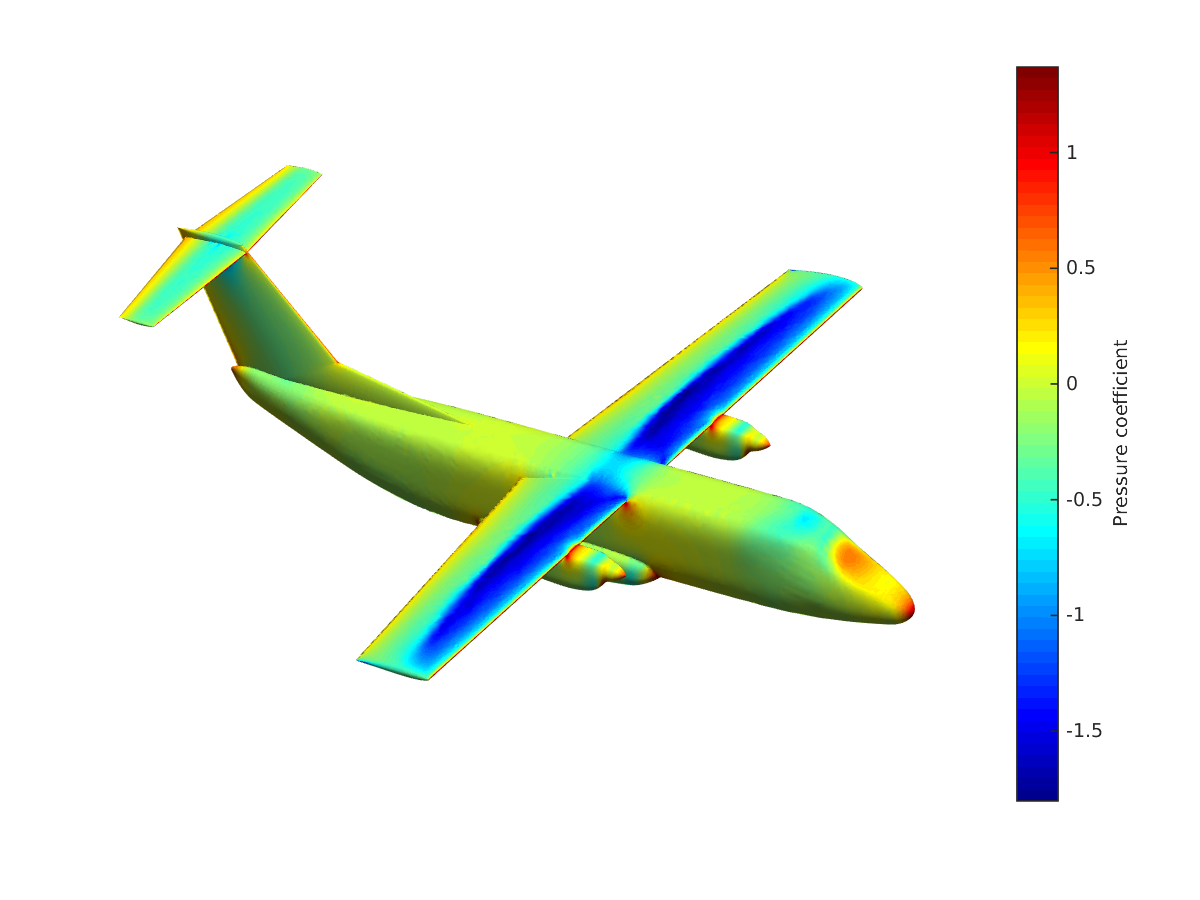}
	\caption{An example solution of the pressure coefficient at the wetted surface of the airplane. The uncertain inputs are fixed at their respective expected values.}
	\label{fig:plane_example}
\end{figure}

\subsubsection{Results}
A reduced tensor cubature rule with negative weights of degree 9 is generated, which yields a cubature rule of 1{,}293 nodes. A Smolyak sparse grid of the same degree consists of 2{,}465 nodes. If a tensor grid is used, then 78{,}125 simulations are necessary (if Gaussian rules are used). The reduced cubature rule with positive weights consists of 8{,}713 nodes in this case. This example shows that if time is an issue, allowing negative weights can indeed reduce the number of nodes significantly.

The lift, drag, and side-force coefficients are scalars obtained by integration over the wetted surface. Their moments are listed in Table~\ref{tbl:3dmoments} together with estimations based on least-squares regression. The degree of the polynomial fitted using least-squares is 5, which is the maximum number possible to keep the system determined, i.e.
\begin{equation}
	\dim \mathbb{P}(5, 7) < 1{,}293 < \dim \mathbb{P}(6, 7).
\end{equation}
Although the cubature rule has negative weights the variance is non-negative and the values are close to the least-squares estimates. Moreover, the order of magnitude of the lower-order moments seems physically reasonable (although no reference data is available for this case).

\begin{table}[t]
	\centering
	\caption{The first four non-central moments determined either using the cubature rule directly on the results (without hat) or using a high-degree cubature rule on the least-squares estimation (with hat). Empty places are values smaller than $10^{-5}$.}
	\label{tbl:3dmoments}
	\begin{tabular}{r || ll | ll | ll}
		\textbf{\#} & $\mathbf{c_l}$ & $\mathbf{\hat{c}_l}$ & $\mathbf{c_d}$ & $\mathbf{\hat{c}_d}$ & $\mathbf{c_\text{sf}}$ & $\mathbf{\hat{c}_\text{sf}}$ \\
		\hline
		\hline
		0 & 1.0000 & 1.0000 & 1.0000 & 1.0000 & 1.0000  & 1.0000  \\
		1 & 0.3640 & 0.3645 & 0.0226 & 0.0228 & -0.0015 & -0.0002 \\
		2 & 0.1326 & 0.1330 & 0.0005 & 0.0005 & &  \\
		3 & 0.0483 & 0.0485 &  &  &  &   \\
		4 & 0.0176 & 0.0177 &  &  &  & 
	\end{tabular}
\end{table}

We calculate moments of the pressure coefficient over the entire surface, and plot them in Figure~\ref{fig:planes}. Here, the four $\beta$-distributed uncertain parameters are taken into account, because geometrical uncertainties cannot be plotted. The mean looks very much like a typical pressure distribution. The variance (which is non-negative at all nodes on the geometry) highlights the location of the shocks, for which a small change in location leads to a large change in the pressure distribution. This result is consistent with UQ analyses of airfoil flows with shocks~\cite{Witteveen2009}. The higher-order moments (which have shown convergence due to the removal of the three geometrical uncertainties) are also large near the shock, indicating that it is unlikely that the resulting distribution is Gaussian.

\begin{figure}
	\centering
	\begin{minipage}{.45\textwidth}
		\includegraphics[width=\textwidth]{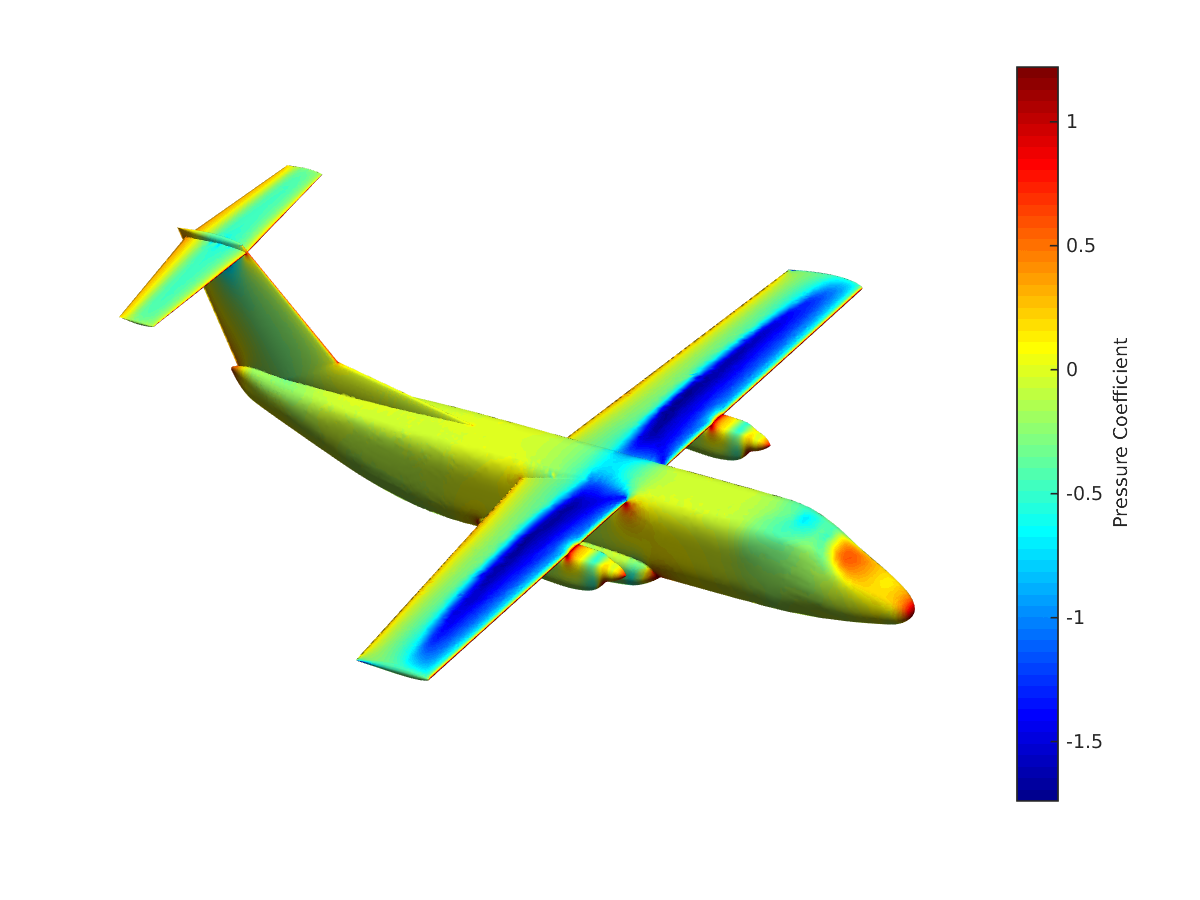}
		\subcaption{Mean}
	\end{minipage}
	\begin{minipage}{.45\textwidth}
		\includegraphics[width=\textwidth]{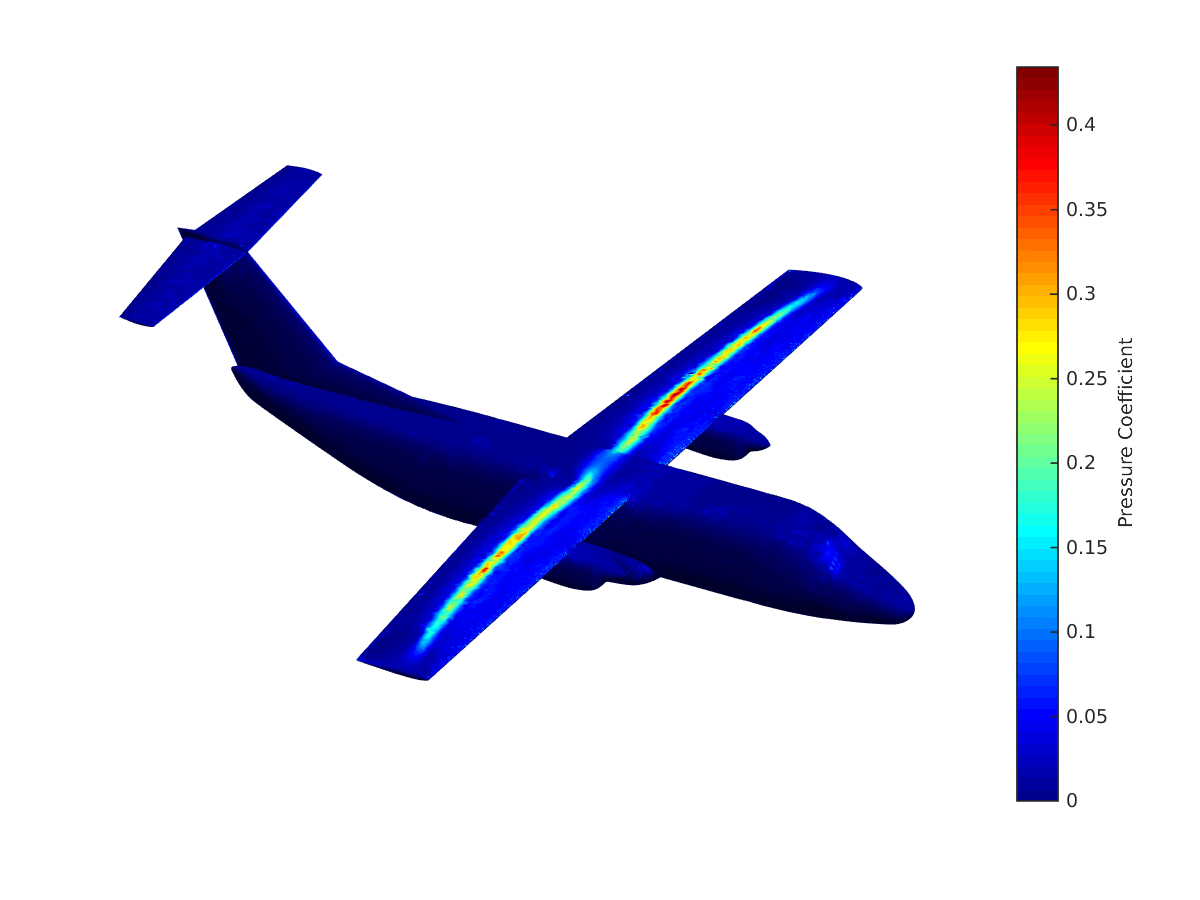}
		\subcaption{Variance}
	\end{minipage}
	\begin{minipage}{.45\textwidth}
		\includegraphics[width=\textwidth]{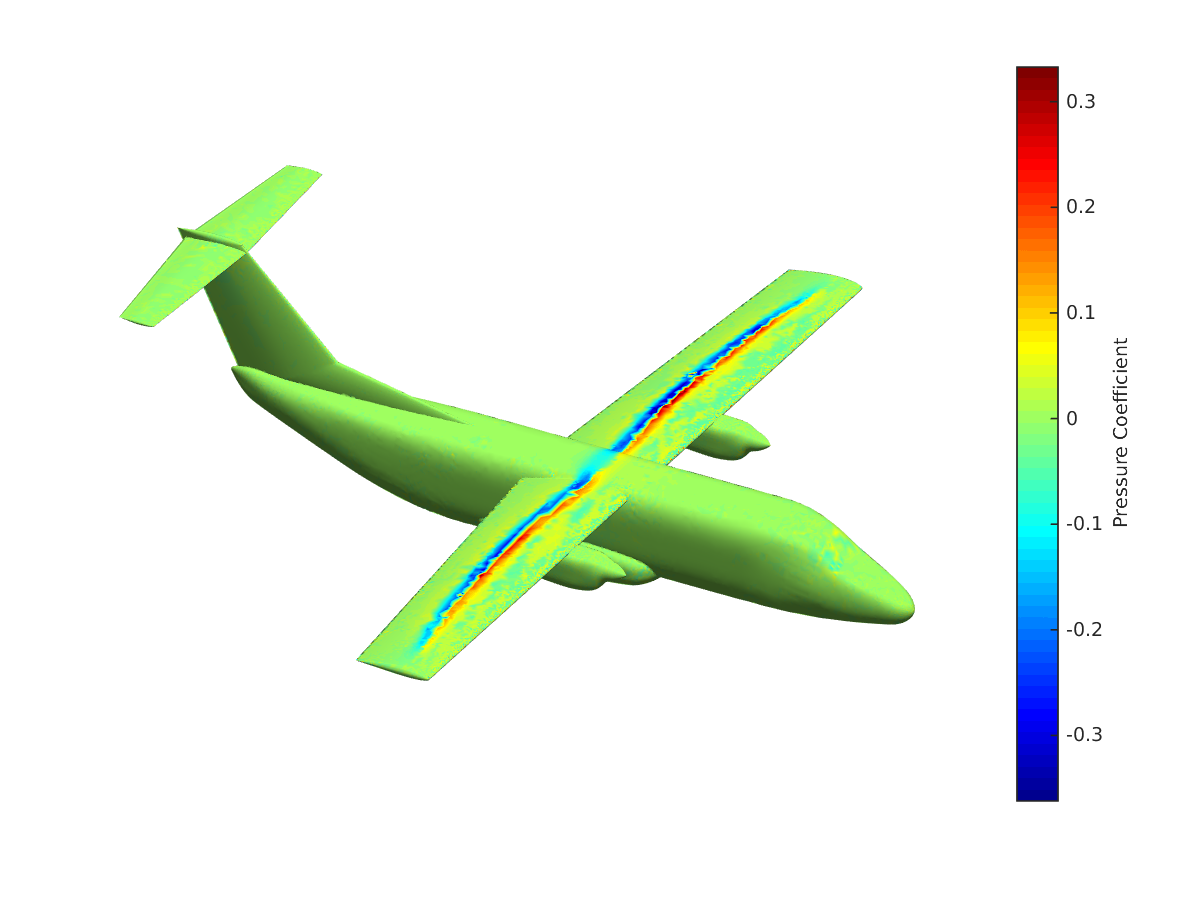}
		\subcaption{Third moment}
	\end{minipage}
	\begin{minipage}{.45\textwidth}
		\includegraphics[width=\textwidth]{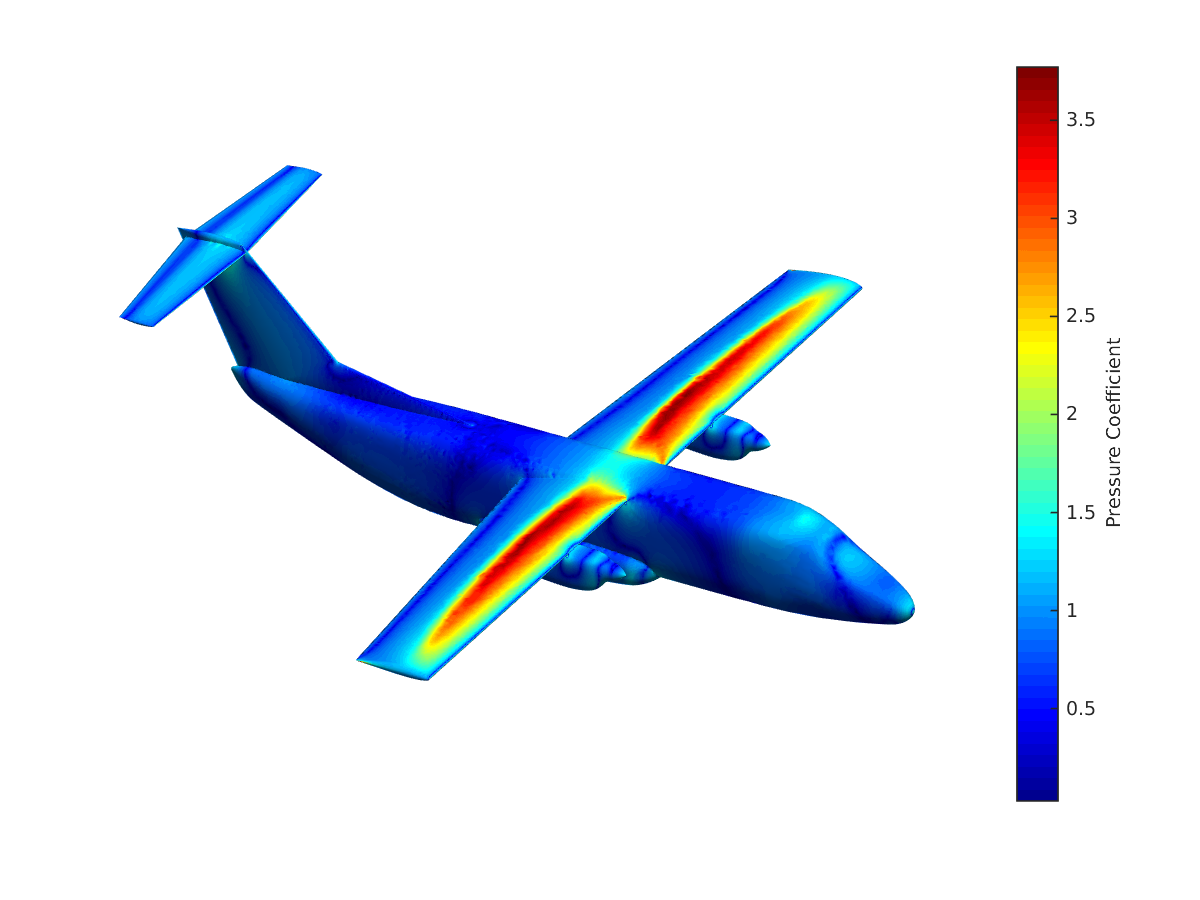}
		\subcaption{Fourth moment}
	\end{minipage}
	\caption{The first four central moments of the pressure coefficient at the wetted surface of the aircraft. Of the $k^\text{th}$ moment the $k^\text{th}$ root is taken such that the units are equal.}
	\label{fig:planes}
\end{figure}

\section{Conclusion}
Non-intrusive uncertainty quantification was studied using stochastic collocation methods. Three important properties are relevant for quadrature rules and cubature rules if they are used in stochastic collocation: nested, positive weights, and symmetry if the original distribution is symmetric. Existing quadrature rules and cubature rules do not have these three properties. The Clenshaw--Curtis quadrature rule is symmetric and nested, but does not have positive weights in general. The Gauss quadrature rule is symmetric and has positive weights, but is not nested.

The introduced quadrature rule performs well using a Smolyak grid. Any positive quadrature rule can be used to generate a set of nested quadrature rules, which can be used as input for a Smolyak procedure. Gaussian rules are quadrature rules which are always positive.

The proposed cubature rule comes in three variants: one which ignores symmetry, one which ignores positive weights, and one that has all properties. For low-dimensional problems, the cubature rule which satisfies all three properties has approximately the same nodes as a Smolyak sparse grid, but has positive weights. Convergence is also approximately equal, which can be seen in the results from the Genz test functions and the lid-driven cavity flow. If the uncertainty quantification problem is high-dimensional and computational efficiency is important, the positivity constraint can be relaxed to remove more nodes. The symmetric reduced cubature rule with possibly some negative weights yields less nodes than Smolyak grids and the reduced cubature rule with positive weights. In our example, higher-order moments remained positive, although the cubature rule has negative weights. This can become an issue though in applications where the response surface is more complex.

Although the cubature rules are nested, it is also important to note that the initial cubature rule still influences the result. If the initial cubature rule is too small, no general strategy exists to add nodes to the cubature rule and create a larger one, having the three properties. This is an option for further research.

\DeclareRobustCommand{\VAN}[1]{van #1}
\bibliographystyle{plainnatnourl}
\bibliography{literature}

\clearpage

\appendix

\section{Proofs of Lemmas~\ref{lmm:main1} and \ref{lmm:main2}}
\label{app:proofs}

\subsection{Proof of Lemma~\ref{lmm:main1}}
\begin{lemma*}
	Let $\{\boldsymbol \xi_1, \dots, \boldsymbol \xi_N\}$ be a type-1 symmetric cubature rule of degree $K$ with positive weights $\{w_1, \dots, w_N\}$. Let $Q$ be an orthant and let $N_{\bar{Q}}$ be the number of cubature nodes in $\bar{Q}$. Then there exists a symmetric null vector of $G_{-C}$ if
	\begin{equation}
		\binom{\lfloor \frac{K}{2} \rfloor + d}{d} < N_{\bar{Q}}.
	\end{equation}
\end{lemma*}
\begin{proof}
	\label{prf:lmm:main1}
	Without loss of generality, assume that the cubature rule is symmetric around 0. First we construct a suitable matrix $G'$.

	Let $Q$ be an orthant. Without loss of generality, let $\{\boldsymbol \xi_1, \dots, \boldsymbol \xi_{N_{\bar{Q}}}\} \in \bar{Q}$. Let $G'$ be the generalized Vandermonde-matrix of the nodes
	\begin{equation}
		\{2^{\|\boldsymbol \xi_1\|_0} \boldsymbol \xi_1, \dots, 2^{\|\boldsymbol \xi_{N_{\bar{Q}}}\|_0} \boldsymbol \xi_{N_{\bar{Q}}}\},
	\end{equation}
	omitting the monomials with an odd power. $G'$ is a $\binom{\lfloor \frac{K}{2} \rfloor + d}{d} \times N_{\bar{Q}}$-matrix.

	Assume that 
	\begin{equation}
		\binom{\lfloor \frac{K}{2} \rfloor + d}{d} < N_{\bar{Q}}
	\end{equation}
	holds. Let $\mathbf{c}'$ be a null vector of $G'$. Construct vector $\mathbf{c}$ as follows for $k = 1, \dots, N$:
	\begin{equation}
		c_k = c'_j,
	\end{equation}
	where $j$ is such that
	\begin{equation}
		|\boldsymbol \xi_k| = (|\xi^{(1)}_k|, |\xi^{(2)}_k|, \dots, |\xi^{(d)}_k|) = (|\xi^{(1)}_j|, |\xi^{(2)}_j|, \dots, |\xi^{(d)}_j|) = |\boldsymbol \xi_j|
	\end{equation}
	and $1 \leq j \leq N_{\bar{Q}}$. Due to the symmetry, such a $j$ always exists. Now $\mathbf{c}$ is a null vector of $G$. To see this, let a row index $i$ of $G$ be given, with row $\mathbf{r}_i$. A case distinction is made.

	\textbf{Case 1:} $m_i$ only contains even powers. Let $m_i(\boldsymbol \xi) = \boldsymbol \xi^\alpha$. Then
	\begin{align}
		\mathbf{r}_i \cdot \mathbf{c} &= \sum_{k=1}^N c_k m_i(\boldsymbol \xi_k) = \sum_{k=1}^N c_k m_i(|\boldsymbol \xi_k|) \\
		&= \sum_{k=1}^N c_k (-1)^\alpha |\boldsymbol \xi_k|^\alpha = \sum_{k=1}^N c_k |\boldsymbol \xi_k|^\alpha = \sum_{j=1}^{N_{\bar{Q}}} c'_j 2^{\|\boldsymbol \xi_j\|_0} |\boldsymbol \xi_j|^\alpha \\
		&= 0,
	\end{align}
	because this monomial was included in matrix $G'$.

	\textbf{Case 2:} $m_i$ contains an odd power. Without loss of generality, assume that the first power is odd, i.e., $m_i(\boldsymbol \xi) = \boldsymbol \xi^\alpha$, with $\alpha^{(1)}$ odd. Then let $I$ be the index set of cubature nodes with first element equal to 0, $J$ the index set of cubature nodes with first element larger than 0, and $K$ all other indices. Due to type-1 symmetry, the size of $J$ and $K$ is equal. Split the nodes along the plane of symmetry. Then
	\begin{align}
		\mathbf{r}_i \cdot \mathbf{c} &= \sum_{k=1}^N c_k m_i(\boldsymbol \xi_k) \\
		&= \sum_{k \in I} c_k m_i(\boldsymbol \xi_k) + \sum_{k \in J} c_k m_i(\boldsymbol \xi_k) + \sum_{k \in K} c_k m_i(\boldsymbol \xi_k) \\
		&= 0 + \sum_{k \in J} c_k \boldsymbol \xi_k^\alpha + \sum_{k \in K} c_k \boldsymbol \xi_k^\alpha \\
		&= \sum_{k \in J} c_k (\xi_k^{(1)})^{\alpha^{(1)}} (\xi_k^{(2 \dots d)})^{\alpha^{(2 \dots d)}} + \sum_{k \in K} c_k (\xi_k^{(1)})^{\alpha^{(1)}} (\xi_k^{(2 \dots d)})^{\alpha^{(2 \dots d)}}\\
		&= \sum_{k \in J} c_k (\xi_k^{(1)})^{\alpha^{(1)}} (\xi_k^{(2 \dots d)})^{\alpha^{(2 \dots d)}} + \sum_{k \in J} c_k (-\xi_k^{(1)})^{\alpha^{(1)}} (\xi_k^{(2 \dots d)})^{\alpha^{(2 \dots d)}} \\
		&= \sum_{k \in J} c_k (\xi_k^{(1)})^{\alpha^{(1)}} (\xi_k^{(2 \dots d)})^{\alpha^{(2 \dots d)}} - \sum_{k \in J} c_k (\xi_k^{(1)})^{\alpha^{(1)}} (\xi_k^{(2 \dots d)})^{\alpha^{(2 \dots d)}} \\
		&= 0. \qedhere
	\end{align}
\end{proof}

\subsection{Proof of Lemma~\ref{lmm:main2}}
\begin{lemma}
	\label{lmm:partitionnr}
	Let $\{\boldsymbol \xi_1, \dots, \boldsymbol \xi_N\}$ be a type-2 symmetric cubature rule of degree $K$ with positive weights $\{w_1, \dots, w_N\}$. Let $Q$ be an orthant after a rotation over $\frac{1}{4} \pi$ of all axes. Let $N_{\bar{Q}}$ be the number of cubature nodes in $\bar{Q}$. Then there exists a symmetric null vector of $G_{-C}$ if
	\begin{equation}
		1 + \sum_{l=1}^K p_d(l) < N_{\bar{Q}},
	\end{equation}
	where $p_d(l)$ is the restricted partition function.
\end{lemma}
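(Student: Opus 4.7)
\emph{Plan.} The proof will mirror the template used in Lemmas~\ref{lmm:symquad} and~\ref{lmm:main1}: (i) build an auxiliary matrix $G'$ whose columns index orbit representatives of the symmetry group and whose rows index equivalence classes of monomials that give rise to identical constraints after symmetrisation, (ii) prove that every null vector of $G'$ lifts to a symmetric null vector of $G_{-C}$, and (iii) bound the row count of $G'$ to force a non-trivial kernel.

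First I would identify the group structure. A type-2 symmetric rule is invariant under the coordinate-permutation action of $S_d$, so the orbit of a node $\boldsymbol \xi$ is determined by the multiset $\{\xi^{(1)}, \dots, \xi^{(d)}\}$. The closed fundamental domain $\bar Q$ (the orthant after the $\frac{1}{4}\pi$ rotation of all axes described in the paper) can be identified with $\{\boldsymbol \xi : \xi^{(1)} \leq \xi^{(2)} \leq \cdots \leq \xi^{(d)}\}$, and $N_{\bar Q}$ counts exactly one representative per orbit, with nodes on the symmetry planes (those with repeated coordinates) lying on the boundary of $\bar Q$.

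Next I would impose the symmetry Ansatz. A symmetric null vector $\mathbf{c}$ of $G_{-C}$ must satisfy $c_k = c_{k'}$ whenever $\boldsymbol \xi_k$ and $\boldsymbol \xi_{k'}$ lie in the same $S_d$-orbit, so $\mathbf{c}$ is parameterised by $N_{\bar Q}$ free entries $c'_j$, one per representative in $\bar Q$. Substituting into the row of $G_{-C}$ associated with the monomial $\boldsymbol \xi^{\alpha}$ yields
\begin{equation}
\sum_{j\,:\,\boldsymbol \xi_j \in \bar Q} c'_j \, \sigma_\alpha(\boldsymbol \xi_j) = 0,
\qquad
\sigma_\alpha(\boldsymbol \xi_j) \coloneqq \sum_{\boldsymbol \xi_k \in \mathrm{orb}(\boldsymbol \xi_j)} \boldsymbol \xi_k^{\alpha},
\end{equation}
and the orbit sum $\sigma_\alpha(\boldsymbol \xi_j)$ depends on $\alpha$ only through its sorted rearrangement. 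Hence two monomials whose multi-indices differ by a permutation generate an identical constraint on $\mathbf{c}'$. I would then define $G'$ as the matrix whose rows run over distinct weakly increasing multi-indices $\mathbf{s}$ with $\|\mathbf{s}\|_1 \leq K$, one per equivalence class, and whose columns run over the orbit representatives in $\bar Q$. By Lemma~\ref{lmm:bound1inc} there are at most $1 + \sum_{l=1}^K p_d(l)$ such multi-indices, so under the hypothesis $G'$ is strictly wider than tall and admits a non-trivial null vector $\mathbf{c}'$, which then lifts to the desired symmetric null vector of $G_{-C}$.

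The main obstacle will be the bookkeeping for step (ii): I need to verify that the lifted $\mathbf{c}$ annihilates \emph{every} row of $G_{-C}$, not just the rows whose multi-index is already sorted. This proceeds in the style of the Case~1/Case~2 split used in the proof of Lemma~\ref{lmm:main1}, invoking the identity $\sigma_\alpha = \sigma_{\mathrm{sort}(\alpha)}$; the delicate point is that orbits of nodes lying on the boundary of $\bar Q$ (with coinciding coordinates) have size strictly less than $d!$, so the group-averaging argument must be carried out per orbit rather than through a uniform multiplicity factor. The counting bound $1 + \sum_{l=1}^K p_d(l)$ is a slight overestimate of what $G_{-C}$ strictly requires, but this conservatism does not affect validity.
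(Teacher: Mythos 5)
Your proposal follows essentially the same route as the paper's proof: you construct the same matrix $G'$ (rows indexed by weakly increasing multi-indices, counted via Lemma~\ref{lmm:bound1inc}; columns indexed by orbit representatives in $\bar{Q}$; entries given by orbit sums), obtain a non-trivial null vector from the width-versus-height count under the stated inequality, and lift it to a symmetric null vector of $G_{-C}$ by constancy on orbits together with the invariance $\sigma_\alpha = \sigma_{\mathrm{sort}(\alpha)}$, exactly as in the appendix. Your explicit per-orbit handling of nodes on the symmetry planes (orbits of size smaller than $d!$) is a detail the paper's proof leaves implicit, but it does not alter the argument.
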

\begin{proof}
	\label{prf:lmm:main2}
	Without loss of generality, assume that the cubature rule is symmetric around 0. The proof has the same structure as the proof of Lemma~\ref{lmm:symquad}, combined with Lemma~\ref{lmm:partitionnr}. Again, we first construct a suitable matrix $G'$.

	Let $Q$ be an orthant after $\frac{1}{4} \pi$ rotation of all basis vectors. Without loss of generality, let $\{\boldsymbol \xi_1, \dots, \boldsymbol \xi_{N_{\bar{Q}}}\} \in \bar{Q}$. Let $G'$ be the generalized Vandermonde-matrix of the nodes
	\begin{equation}
		\left\{\sum_{k=1}^{\#\sigma \boldsymbol \xi_1} \sigma_k \boldsymbol \xi_1, \dots, \sum_{k=1}^{\#\sigma \boldsymbol \xi_{N_{\bar{Q}}}} \sigma_k \boldsymbol \xi_{N_{\bar{Q}}}\right\},
	\end{equation}
	where $\#\sigma \boldsymbol \xi_k$ is the number of permutations of the elements of cubature node $\boldsymbol \xi_k$ and where $\sigma_k$ is the $k^\text{th}$ permutation operator, i.e., it is a sum over all permutations of $\boldsymbol \xi_k$. Omit all monomials $\boldsymbol \xi^\alpha$ with a power which is not \emph{sorted}. Due to Lemma~\ref{lmm:bound1inc}, $G'$ is a $\left(1 + \sum_{l=1}^K p_d(l)\right) \times N_{\bar{Q}}$-matrix. Assume that
	\begin{equation}
		1 + \sum_{l=1}^K p_d(l) < N_{\bar{Q}}
	\end{equation}
	holds. Let $\mathbf{c}'$ be a null vector of $G'$. Construct vector $\mathbf{c}$ as follows for $k = 1, \dots, N$:
	\begin{equation}
		c_k = c'_j,
	\end{equation}
	where $j$ is such that
	\begin{equation}
		\boldsymbol \xi_k = \sigma(\boldsymbol \xi_j),
	\end{equation}
	for a suitable permutation $\sigma$ with $1 \leq j \leq N_{\bar{Q}}$. This is well-defined because of the type-2 symmetry. Now $\mathbf{c}$ is a null vector of $G$. To see this, let a row index $i$ of $G$ be given, with row $\mathbf{r}_i$. Let $m_i(\boldsymbol \xi) = \boldsymbol \xi^\alpha$ be the respective monomial. Then:
	\begin{equation}
		\mathbf{r}_i \cdot \mathbf{c} = \sum_{k=1}^N c_k m_i(\boldsymbol \xi_k) = \sum_{k=1}^N c_k \boldsymbol \xi_k^\alpha.
	\end{equation}
	For each $\boldsymbol \xi_k$, there exists a permutation operator $\sigma_k$, such that $\sigma_k(\boldsymbol \xi_k) = \boldsymbol \xi_j$ for $1 \leq j \leq N_{\bar{Q}}$. Hence:
	\begin{align}
		\sum_{k=1}^N c_k \boldsymbol \xi_k^\alpha = \sum_{k=1}^N c_k (\sigma_k \boldsymbol \xi_{j_k})^\alpha
		= \sum_{j=1}^{N_{\bar{Q}}} c'_j \sum_{k=1}^{\#\sigma \boldsymbol \xi_j} (\sigma_k \boldsymbol \xi_j)^\alpha
		&= 0. \qedhere
	\end{align}
\end{proof}

\section{Algorithms for reduced cubature rule generation}
\label{app:algs}
The algorithms provided in this appendix are not yet efficient for high-dimensional cubature rules of high degree, and are merely given for sake of completeness. Examples of possible optimizations are the following: numerical issues can arise when determining the null vectors and time and memory issues can arise if the complete $G_{-C}$ is constructed. The first issue can be partially overcome by scaling all elements of the nodes onto the same interval and the second issue can be circumvented by constructing the matrix $G_{-C}$ column-wise and checking for existence of a null vector after each addition of a column.

\begin{algorithm}[H]
\caption{Determining the reduced cubature rule}
\label{alg:redcubrule}
\begin{algorithmic}[1]
\Require {Cubature rule nodes $\{\xib_1, \xib_2, \dots, \xib_N\}$ and weights $\{w_1, w_2, \dots, w_N\}$ of degree $K$, with $N = \dim \mathbb{P}(K, d)$.}
\Ensure {Non-negative weights $\{w_1^*, w_2^*, \dots, w_N^*\}$ having $\bigl(\dim \mathbb{P}(K-1, d)\bigr)$ number of non-zero entries, such that the resulting quadrature rule has degree $K-1$.}
\Statex ~
\State Construct $G_{-C}$ of the nodes using all monomials up to degree $K-1$ (see Section~\ref{subsec:redstep})
\State Determine $C$ null vectors $\mathbf{c}^{(1)}, \mathbf{c}^{(2)}, \dots, \mathbf{c}^{(C)}$ of $G_{-C}$
\For {$i = 1, \dots, C$} \label{alg:redcubrule:forstart}
\State $\mathbf{c} \gets \mathbf{c}^{(i)}$
\State $\alpha^{(1)} \gets \min_{k=1, \dots, N} \left\{\frac{w_k}{c_k} : c_k > 0\right\}$
\State $\alpha^{(2)} \gets \max_{k=1, \dots, N} \left\{-\frac{w_k}{c_k} : c_k < 0\right\}$
\State Let $k^{(1)}$ and $k^{(2)}$ be such that $\alpha^{(1)} = w_{k^{(1)}} / c_{k^{(1)}}$ and $\alpha^{(2)} = w_{k^{(2)}} / c_{k^{(2)}}$
\State $w^{(1)}_k \gets w_k - \alpha^{(1)} c_k$ and $w^{(2)}_k \gets w_k + \alpha^{(2)} c_k$ for $k = 1, \dots, N$.
\Statex ~
\State \emph{Here, a selection criterion can be applied:}
\State Pick $l = 1$ or 2 and let $\left\{w\right\} \gets \left\{w^{(l)}\right\}$
\Statex ~
\For {$j = i+1, \dots, C$}
\State $\mathbf{c}^{(j)} \gets \mathbf{c}^{(j)} - \mathbf{c}^{(i)} c^{(j)}_{k^{(l)}}/ c^{(i)}_{k^{(l)}}$
\State \emph{Now, $c^{(j)}_{k^{(l)}} = 0$}
\EndFor
\EndFor \label{alg:redcubrule:forend}
\State \textbf{return} $\{ w \}$
\end{algorithmic}
\end{algorithm}

\begin{algorithm}[H]
\caption{Determining the \emph{symmetric} reduced cubature rule}
\label{alg:symredcubrule}
\begin{algorithmic}[1]
\Require {Type-1 and type-2 symmetric cubature rule nodes $\{\xib_1, \xib_2, \dots, \xib_N\}$ and weights $\{w_1, w_2, \dots, w_N\}$ for degree $K$, with $N = \dim \mathbb{P}(K, d)$.}
\Ensure {Non-negative weights $\{w_1^*, w_2^*, \dots, w_N^*\}$ having at most $2^d \left(1 + \sum_{l=1}^{\lfloor K / 2 \rfloor} p_d(l)\right)$ non-zero entries, such that the resulting quadrature rule has degree $K-1$.}
\Statex ~
\State Let $G'$ be the generalized Vandermonde-matrix of the nodes (see \ref{app:proofs}):
\begin{equation}
		\left\{2^{\|\boldsymbol \xi_1\|_0} \sum_{k=1}^{\#\sigma \boldsymbol \xi_1} \sigma_k \boldsymbol \xi_1, \dots, 2^{\|\boldsymbol \xi_{N_{\bar{Q}}}\|_0} \sum_{k=1}^{\#\sigma \boldsymbol \xi_{N_{\bar{Q}}}} \sigma_k \boldsymbol \xi_{N_{\bar{Q}}}\right\}.
\end{equation}
\State Determine $K$ null vectors $\mathbf{c}'^{(1)}, \mathbf{c}'^{(2)}, \dots, \mathbf{c}'^{(K)}$ of $G'$
\State Determine $K$ symmetric null vectors $\mathbf{c}^{(1)}, \dots, \mathbf{c}^{(K)}$ of $G_{-C}$
\State Execute step \ref{alg:redcubrule:forstart} until \ref{alg:redcubrule:forend} of Algorithm~\ref{alg:redcubrule}, with $C \gets K$
\end{algorithmic}
\end{algorithm}

\begin{algorithm}[H]
\caption{Determining the \emph{negative symmetric} reduced cubature rule}
\label{alg:negsymredcubrule}
\begin{algorithmic}[1]
\Require {Type-1 and type-2 symmetric cubature rule nodes $\{\xib_1, \xib_2, \dots, \xib_N\}$ and weights $\{w_1, w_2, \dots, w_N\}$ for degree $K$, with $N = \dim \mathbb{P}(K, d)$.}
\Ensure {Non-negative weights $\{w_1^*, w_2^*, \dots, w_N^*\}$ having at most $2^d \left(1 + \sum_{l=1}^{\lfloor K / 2 \rfloor} p_d(l)\right)$ non-zero entries, such that the resulting quadrature rule has degree $K-1$.}
\Statex ~
\State Let $G'$ be the generalized Vandermonde-matrix of the nodes (see \ref{app:proofs}):
\begin{equation}
		\left\{2^{\|\boldsymbol \xi_1\|_0} \sum_{k=1}^{\#\sigma \boldsymbol \xi_1} \sigma_k \boldsymbol \xi_1, \dots, 2^{\|\boldsymbol \xi_{N_{\bar{Q}}}\|_0} \sum_{k=1}^{\#\sigma \boldsymbol \xi_{N_{\bar{Q}}}} \sigma_k \boldsymbol \xi_{N_{\bar{Q}}}\right\}.
\end{equation}
\State Determine $K$ null vectors $\mathbf{c}'^{(1)}, \mathbf{c}'^{(2)}, \dots, \mathbf{c}'^{(K)}$ of $G'$
\State Determine $K$ symmetric null vectors $\mathbf{c}^{(1)}, \dots, \mathbf{c}^{(K)}$ of $G_{-C}$ (see Section~\ref{subsec:redstep})

\For {$i = 1, \dots, K$} 
\State $\mathbf{c} \gets \mathbf{c}^{(i)}$
\State $S_K \gets \left\{k | c_k \neq 0\right\}$
\State $\gamma \gets \max_{ k \in S_K } \left\{ 2^{\| \xib_k \|_0} \# \sigma \xib_k \right\}$
\State Let $k_0$ be such that $\gamma = 2^{\| \xib_{k_0} \|_0} \# \sigma \xib_{k_0}$
\State $\alpha \gets w_{k_0} / c_{k_0}$
\State $w_k \gets w_k - \alpha c_k$ for $k = 1, \dots, N$
\For {$j = i+1, \dots, K$}
\State $\mathbf{c}^{(j)} \gets \mathbf{c}^{(j)} - \mathbf{c}^{(i)} c^{(j)}_{k_0}/ c^{(i)}_{k_0}$
\State \emph{Now, $c^{(j)}_{k_0} = 0$}
\EndFor
\EndFor
\State \textbf{return} $\{ w \}$

\end{algorithmic}
\end{algorithm}

\end{document}